\newcolumntype{C}[1]{>{\centering\arraybackslash}m{#1}}
\tikzstyle{none}=[inner sep=0pt]
\newcommand{\bN}{\mathbb N}
\newcommand{\R}{\mathcal R}
\newcommand{\bZ}{\mathbb Z}
\newcommand{\x}{\times}
\theoremstyle{plain}
\newtheorem{thm}{Theorem}[section]
\newtheorem{prop}[thm]{Proposition}
\newtheorem{lem}[thm]{Lemma}
\newtheorem{cor}[thm]{Corollary}
\theoremstyle{remark}
\newtheorem{rem}[thm]{Remark}
\newtheorem{example}[thm]{Example}
\theoremstyle{definition}
\newtheorem{defn}[thm]{Definition}
\newtheorem{conj}[thm]{Conjecture}
\newtheorem*{crit*}{Criterion~B}
\definecolor{aquamarine}{rgb}{0.5, 1.0, 0.83}
\definecolor{princetonorange}{rgb}{1.0, 0.56, 0.0}
\definecolor{caribbeangreen}{rgb}{0.0, 0.8, 0.6}
\definecolor{bunired}{rgb}{0.8, 0.0, 0.0}
\definecolor{cdgreen}{rgb}{0.0, 0.42, 0.24}
\definecolor{lavender(floral)}{rgb}{0.71, 0.49, 0.86}
\definecolor{bluedefrance}{rgb}{0.19, 0.55, 0.91}
\definecolor{iris}{rgb}{0.35, 0.31, 0.81}
\definecolor{darkgreen}{rgb}{0.33, 0.42, 0.18}
\newcommand{\tC}{{\tt C}}
\newcommand{\tG}{{\tt G}}
\newcommand{\tH}{{\tt H}}
\newcommand{\tK}{{\tt K}}
\newcommand{\tI}{{\tt I}}
\newcommand{\tD}{{\tt D}}
\newcommand{\tT}{{\tt T}}
\newcommand{\tR}{{\tt R}}
\newcommand{\tS}{{\tt S}}
\newcommand{\tP}{{\tt P}}
\newcommand{\tL}{{\tt L}}
\newcommand{\tA}{{\tt A}}
\newcommand{\tM}{{\tt M}}
\newcommand{\Comp}[2]{C_{#2}(#1)}
\title{On the homotopy type of multipath complexes}
\author{Luigi Caputi}
\author{Carlo Collari}
\author{Sabino Di Trani}
\author{Jason P. Smith}
\begin{document}

\maketitle

\begin{abstract}
A multipath in a directed graph is a disjoint union of paths. The multipath complex of a directed graph $\tG$ is the simplicial complex whose faces are the multipaths of $\tG$.  We compute the Euler characteristic, and associated generating function, of the multipath complex for some families of graphs, including transitive tournaments and complete bipartite graphs. Then, we compute the homotopy type of multipath complexes of linear graphs, polygons, small grids and transitive tournaments. We show that they are all contractible or wedges of spheres. We introduce a new technique for decomposing directed graphs into dynamical regions, which allows us to simplify the homotopy computations.
\end{abstract}

%\subjclass[2020]{05E45, 55U10, 05C70, 05C20}
%\keywords{Multipath complex, Matching complex, Euler characteristic, Generating functions}

\section{Introduction}

Simplicial complexes associated to monotone properties of (directed) graphs are central objects in both combinatorics and topology (cf.~\cite{Jonsson}) with interesting and deep connections with other areas of mathematics -- see, e.g.~\cite{Vassiliev1993,MR2022345, PaoliniSalvetti}. 
Particularly relevant examples of simplicial complexes arising from monotone properties are the well-known matching complex and its relatives, the independence complex and the flag complex.  In this work, we focus  on multipath complexes, which are also related (albeit differently from independence and flag complexes) to matching complexes~\cite[Section~4]{SpriSecondo}. 
The simplices of the multipath complex are called multipaths~\cite{turner}, and are disjoint unions of directed paths.
Multipath complexes appeared in \cite{Omega} -- denoted therein by $\Omega(\tG)$ -- and were studied for~$\tG = \tK_n$, the complete directed graph, in virtue of their relation to symmetric homology of algebras~\cite{AultFed,AultNoFed}.
A first step in a systematic investigation of topological and combinatorial properties of  multipath complexes was taken in~\cite{secondo}, and was motivated by homological questions~\cite{primo}.
In this paper, we continue the study of the combinatorial and topological properties of multipath complexes of directed graphs. 
More precisely, we provide both qualitative and quantitative information about their homotopy type.

One of the main results in~\cite{secondo} asserts that the homology of multipath complexes can be fairly rich; namely, it can be supported in arbitrarily high degree, and can be of arbitrarily high rank. 
A qualitative measure of this complexity is the (reduced) Euler characteristic. We compute the Euler characteristic, as well as generating functions, for infinite families of directed graphs such as transitive tournaments and complete bipartite graphs -- this is developed in Section~\ref{sec:generating}.
It is worth noting that, for transitive tournaments, the Euler characteristic of the associated multipath complexes can be expressed in terms of Stirling numbers of the second kind, and that the associated generating function is doubly exponential. This is qualitatively different from the generating function of the Euler characteristic of matching complexes of complete graphs -- cf.~\cite[Table~10.2]{Jonsson} -- which is exponential. %We compute the generating function of t
Instead, the Euler characteristic of the multipath complex for complete bipartite graphs with alternating orientation is  the Euler characteristic of the chessboard complex
%of the underlying unoriented graph
-- previously investigated in~\cite{MatchingChess}.

In the second part of this work we focus on the explicit description of the homotopy type of multipath complexes. The general question about what kind of simplicial complexes can be realised as multipath complexes remains open. Here we employ topological tools and use combinatorial techniques to identify the homotopy type of the multipath complex for some infinite families: linear graphs, polygons, small grids, and transitive tournaments. We prove that multipath complexes associated to these families are either contractible or wedges of spheres. 
To simplify the computation of the homotopy type of multipath complexes we introduce a decomposition of directed graphs into dynamical regions (cf.~Definition~\ref{def:dynamicalregion}). Intuitively, dynamical regions are determined by the behaviour of flows in directed graphs; 
when moving from a vertex of this region, while following the orientation, one either stays in the region or goes out without coming back. Minimal dynamical regions are called dynamical modules. We prove the following;

\begin{thm}
Let $\tG$ be a directed graph. Then, there is a unique (up to re-ordering) decomposition of $\tG$ in to dynamical modules $\tM_1,...,\tM_{k}$, and we have a homotopy equivalence
 \[X(\tG) \simeq X(\tM_1) \ast \cdots \ast X(\tM_k) \ ,\]
where $X(-)$ denotes the multipath complex. Furthermore, the above decomposition can be found algorithmically.
\end{thm}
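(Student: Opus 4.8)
The plan is to separate the combinatorial content (existence, uniqueness and algorithmic construction of the module decomposition) from the topological content (the join equivalence), and to reduce the latter to a two-region splitting by exploiting the associativity of the join.

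First I would set up the combinatorial backbone. Starting from the flow description in \Cref{def:dynamicalregion}, I would encode the ``no return'' condition as a reachability relation on the vertices of $\tG$: write $w\preceq w'$ when there is a directed path from $w$ to $w'$, and phrase membership in a common dynamical region in terms of the directed paths that cannot escape and re-enter. The goal is to show that dynamical regions are closed under nonempty intersection and under union, so that they form a lattice whose atoms are exactly the dynamical modules; minimality then forces the modules to be pairwise disjoint and to cover $\tG$, giving existence. Uniqueness up to reordering becomes automatic once the modules are identified with the canonical blocks of this relation, since any decomposition into modules must both refine and be refined by the block partition. The same relation is computable from the transitive closure of $\tG$, which yields the algorithm and its termination; this disposes of the first and third assertions.

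For the homotopy equivalence I would argue by induction on $k$, peeling off one module at a time: using that the complement $\tR=\tM_1\cup\cdots\cup\tM_{k-1}$ is again a dynamical region and that the multipath complex of a disjoint union of digraphs is the join of the multipath complexes, it suffices to prove $X(\tG)\simeq X(\tR)\ast X(\tM_k)$ and then recurse on $\tR$. The key structural consequence of the no-return property is that every edge of $\tG$ joining $\tR$ to $\tM_k$ is oriented in the same direction, say from $\tR$ to $\tM_k$ (an edge each way would produce a directed path leaving and re-entering $\tM_k$); in particular no such crossing edge lies on a directed cycle. Writing $C$ for the set of crossing edges, the deletion $\tG\setminus C$ is the disjoint union $\tR\sqcup\tM_k$, so $X(\tG\setminus C)=X(\tR)\ast X(\tM_k)$, and the theorem reduces to proving $X(\tG)\simeq X(\tG\setminus C)$. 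I would establish this by removing the crossing edges one at a time: for a single crossing edge $c=(u\to v)$, the complex $X(\tG)$ is the union of the subcomplex $X(\tG\setminus c)$ with the closed star of $c$, glued along the link $\mathrm{lk}(c)$. Since the closed star is a cone it is contractible, so $X(\tG)$ is the mapping cone of $\mathrm{lk}(c)\hookrightarrow X(\tG\setminus c)$, and $X(\tG)\simeq X(\tG\setminus c)$ as soon as $\mathrm{lk}(c)$ is contractible. Here $\mathrm{lk}(c)$ is itself a multipath complex, namely $X(\tG')$ where $\tG'$ is obtained from $\tG$ by deleting every edge out of $u$ and every edge into $v$ (exactly the edges that cannot coexist with $c$ in a multipath, the acyclicity constraint being vacuous for $c$).

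The main obstacle is precisely the contractibility of these links, and it is here that the \emph{minimality} of the modules must be used. After forcing $u$ to be a sink and $v$ to be a source, one must show that the residual digraph $\tG'$ carries a cone point---for instance a vertex whose unique incident edge can always be adjoined to any multipath---or, more generally, admits an acyclic discrete Morse matching with a single critical cell. The delicate part is propagating this through the one-edge-at-a-time induction: after deleting some crossing edges the intermediate digraphs are no longer unions of the original modules, so I would state the inductive hypothesis directly in terms of the local flow condition at $u$ and $v$ rather than the global decomposition, and check that it is preserved at each deletion. Once link contractibility is secured, assembling the single-edge equivalences, then the two-region case, and finally the full join is formal.
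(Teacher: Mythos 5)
There is a genuine gap, and it starts with the structural model. You treat the dynamical modules as vertex-disjoint subgraphs connected by ``crossing edges,'' and reduce the theorem to showing that deleting those edges does not change the homotopy type. But under Definition~\ref{def:dynamicalregion} the complement $\Comp{\tR}{\tG}$ of a region $\tR$ is spanned by \emph{all} edges of $\tG$ not in $\tR$, so a decomposition into dynamical modules is a partition of the \emph{edge set} in which distinct modules overlap in their boundary vertices: there are no edges running between two different modules at all. Concretely, the dandelion $\tD_{n,m}$ splits into the modules $\tD_{n,0}$ and $\tD_{0,m}$ sharing the vertex $v_0$, and the coherent path $\tI_2$ splits into its two single-edge modules sharing the middle vertex. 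Thus your set $C$ of crossing edges is empty, while $\tR\cup\tM_k=\tG$ is still \emph{not} a disjoint union -- the pieces are glued along shared vertices -- so the reduction ``$\tG\setminus C=\tR\sqcup\tM_k$, hence it suffices to prove $X(\tG)\simeq X(\tG\setminus C)$'' never gets off the ground, and the preliminary claim that all edges joining $\tR$ to $\tM_k$ point the same way is about objects that do not exist. The real content of the theorem (the paper's Proposition~\ref{prop:regions}) is that, despite the gluing along $\partial\tR$, the path poset of $\tG$ is isomorphic to the path poset of the disjoint union $\tR\sqcup\tS$ with $\tS=\Comp{\tR}{\tG}$: restricting a multipath of $\tG$ to $\tR$ and $\tS$ gives a pair of multipaths, and, conversely, the union of a multipath of $\tR$ with a multipath of $\tS$ is again a multipath of $\tG$. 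It is precisely this converse that uses conditions \ref{item:dandelion} and \ref{item:cicle}: at a boundary vertex the $\tR$-edges and the $\tS$-edges have opposite orientations, so the two pieces compose into a longer simple path rather than violating the degree-one condition, and \ref{item:cicle} excludes the creation of an oriented cycle. Your argument never touches this gluing issue, which is exactly where the hypotheses of the theorem do their work.

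Even granting your picture, the proof is incomplete on its own terms: the contractibility of the links $\mathrm{lk}(c)$, which you correctly identify as the crux of your edge-deletion scheme, is only described as a goal (``a cone point \dots{} or an acyclic discrete Morse matching with a single critical cell''), not proven, and the one-edge-at-a-time induction whose preservation you flag as ``delicate'' is left unverified. By contrast, the combinatorial half of your proposal is essentially sound and matches the paper: closure of dynamical regions under nonempty intersection (the paper's Remark~\ref{rem:intersectionregions}) yields a unique minimal module through each edge, and existence, uniqueness up to reordering, and the iterative algorithm follow; just note that the modules are edge-disjoint but not vertex-disjoint, and that your additional assertion that regions are closed under union is neither justified nor needed. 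The repair for the topological half is to abandon edge deletion and prove the poset isomorphism $P(\tG)\cong P(\tR\sqcup\tS)$ directly from the boundary conditions; the join formula then follows because multipath complexes convert disjoint unions into joins, and the peeling-off induction you set up goes through exactly as in the paper.
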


The decomposition into dynamical modules, for certain families of directed graphs, might be trivial. This is the case, for instance, of transitive tournaments. Then, the computation of the homotopy type of the associated multipath complexes needs different methods. Borrowing techniques from combinatorial topology, we show that the multipath complex of transitive tournaments on $n\geq 3$ vertices is  homotopy equivalent to a wedge of spheres (Theorem~\ref{thm:cliqueshomotopy}). 
% More generally, we prove the following;

% \begin{thm}
% The multipath complex of each incomplete tournament (cf.~Notation~\ref{nota:incompletetournament}) of a transitive tournament is either empty, contractible, or homotopy equivalent to a wedge of spheres.
% \end{thm}
This result is in sharp contrast to what happens with the homotopy type of the matching complex for complete graphs; the latter is not known in general, but it is known that its homology has torsion in specific degrees -- see, e.g.~\cite{torsion, MR2470116, MR2731551}. For stable 
 dynamical regions (cf.~Definition~\ref{def:dynamicalregion}), multipath complexes and matching complexes are isomorphic (see Lemma~\ref{lem:alternatingmultipath}), hence also the multipath complex can have torsion -- see~\cite[Proposition~4.5]{SpriSecondo}. We conjecture that, for a dynamical module $\tM$, if  the multipath complex~$X(\tM)$ has torsion, then $\tM$ is stable.

The computations of the Euler characteristics presented in this work use the custom package \textsc{path\_poset}, publicly available at~\cite{pathposet}. To compute homology this package was combined with SageMath~\cite{sagemath}.

\subsection*{Acknowledgements}

LC acknowledges support from the \'{E}cole Polytechnique F\'{e}d\'{e}rale de Lausanne via a collaboration agreement with the University of Aberdeen. CC is supported by the MIUR-PRIN project 2017JZ2SW5.
LC and CC acknowledge partial support from the Heilbronn Small Grants Scheme. SDT is partially supported by the “National Group for Algebraic and Geometric Structures, and their Applications” (GNSAGA – INdAM). 
LC warmly thanks Ran Levi for the useful conversations, motivation and support.
The authors are grateful to Paolo Lisca and Roberto Pagaria for their comments on the drafts of this paper.

\section{Basic notions}
In this section we recall some basic notions  needed {throughout}. A (finite) undirected \emph{graph} \tG~is a pair of (finite) sets~$(V,E)$ consisting of a set~$V$ of \emph{vertices}, and a set~$E$ of~\emph{edges} given by unordered pairs of distinct vertices of~$\tG$. All graphs are assumed to be simple{, that is, do not contain loops or multiedges}.
We also consider directed graphs, or \emph{digraphs}, a (finite) digraph~$\tG$ is a pair of (finite) sets $(V(\tG),E(\tG))$, such that $E(\tG)$ is a set of ordered pairs of distinct vertices.
Given an edge $e = (v, w)$ of $E(\tG)$ we call the vertex $v$ the \emph{source}~of~$e$, denoted $s(e)$, while the vertex $w$ is the \emph{target} of~$e$, denoted $t(e)$.
An \emph{orientation} on an undirected graph is the choice of a source and of a target for each edge. 
An undirected graph~$\tG$ can be turned into a directed graph by orienting each edge of $\tG$ in both directions; vice versa, given a directed graph, we can consider the underlying \emph{simple} undirected graph obtained by forgetting the directions of the edges, and merging any multiedges.

A \emph{subgraph}~$\tH$ of a (directed) graph~$\tG$ is a (directed) graph such that $V(\tH)\subseteq V(\tG)$ and $E(\tH)\subseteq E(\tG)$; if $\tH$ is a subgraph of $\tG$, we  write $\tH \leq \tG$.
If $\tH \leq \tG$ and $\tH\neq \tG$ we say that $\tH$ is a \emph{proper subgraph} of $\tG$, and we write~$\tH< \tG$. We say that $\tH$ is an \emph{induced subgraph}  of a (directed) graph~$\tG$ if for any pair of vertices $v,w$ in $\tH$, if $e$ is an edge in $\tG$ between $v$ and $w$, then $e$ is also an edge of $\tH$.  Furthermore, if $\tH\leq \tG$ and $V(\tH) = V(\tG)$ we  say that $\tH$ is a \emph{spanning subgraph} of~$\tG$. Two edges in an undirected graph~$\tG$ are called \emph{adjacent} if they share a common vertex.  
A  \emph{simple path} in a digraph~$\tG$  is a sequence of edges $e_1,...,e_n$ of $\tG$ such that~$s(e_{i+1})=t(e_i)$ for $i=1,\dots,n-1$, and no vertex is encountered twice, i.e.~if $s(e_i) = s(e_j)$ or $t(e_i) = t(e_j)$, then $i=j$, and is not a cycle, i.e.~$s(e_1)\neq t(e_n)$ -- cf.~Figure~\ref{fig:nstep}.

% \begin{figure}[h]
% \centering
% 	\begin{tikzpicture}[baseline=(current bounding box.center)]
% 		\tikzstyle{point}=[circle,thick,draw=black,fill=black,inner sep=0pt,minimum width=2pt,minimum height=2pt]
% 		\tikzstyle{arc}=[shorten >= 8pt,shorten <= 8pt,->, thick]
		
% 		\node[above] (v0) at (0,0) {$v_0$};
% 		\draw[fill] (0,0)  circle (.05);
% 		\node[above] (v1) at (1.5,0) {$v_1$};
% 		\draw[fill] (1.5,0)  circle (.05);
% 		\node[] at (3,0) {\dots};
% 		\node[above] (v4) at (4.5,0) {$v_{n-1}$};
% 		\draw[fill] (4.5,0)  circle (.05);
% 		\node[above] (v5) at (6,0) {$v_{n}$};
% 		\draw[fill] (6,0)  circle (.05);
		
% 		\draw[thick, bunired, -latex] (0.15,0) -- (1.35,0);
% 		\draw[thick, bunired, -latex] (1.65,0) -- (2.5,0);
% 		\draw[thick, bunired, -latex] (3.4,0) -- (4.35,0);
% 		\draw[thick, bunired, -latex] (4.65,0) -- (5.85,0);
% 	\end{tikzpicture}
% 	\caption{The coherently oriented linear graph $\tI_n$.}
% 	\label{fig:nstep}
% \end{figure}
\begin{figure}[h]
\centering
	\begin{tikzpicture}[baseline=(current bounding box.center),line join = round, line cap = round]
		\tikzstyle{point}=[circle,thick,draw=black,fill=black,inner sep=0pt,minimum width=2pt,minimum height=2pt]
		\tikzstyle{arc}=[shorten >= 8pt,shorten <= 8pt,->, thick]
		\def\c{8}\def\d{.5}
		\node[above] (v0) at (0-\c,0+\d) {$v_0$};\draw[fill] (0-\c,0+\d)  circle (.05);
		\node[above] (v1) at (1.5-\c,0+\d) {$v_1$};\draw[fill] (1.5-\c,0+\d)  circle (.05);
		\node[above] (v2) at (3-\c,0+\d) {$v_{2}$};\draw[fill] (3-\c,0+\d)  circle (.05);
		\node[above] (v3) at (4.5-\c,0+\d) {$v_{3}$};\draw[fill] (4.5-\c,0+\d)  circle (.05);
		
		\draw[thick, bunired, -latex] (0.15-\c,0+\d) -- (1.35-\c,0+\d);
		\draw[thick, bunired, -latex] (1.65-\c,0+\d) -- (2.85-\c,0+\d);
		\draw[thick, bunired, -latex] (3.15-\c,0+\d) -- (4.35-\c,0+\d);
		
		\node (e1) at (0,0) {};
		\node (e2) at (2,0) {};
		\node (e3) at (60:2) {};
		\draw[pattern=north west lines, pattern color=bluedefrance,thick] (e1.center) -- (e2.center) -- (e3.center) -- (e1.center);
		\node[circle,fill=bunired,scale=0.5] at (e1) {};\node[below] at (e1) {$(v_0,v_1)$};
        \node[circle,fill=bunired,scale=0.5] at (e2) {};\node[below] at (e2) {$(v_1,v_2)$};
        \node[circle,fill=bunired,scale=0.5] at (e3) {};\node[above] at (e3) {$(v_2,v_3)$};

% 		\coordinate [label=above:$v_3$] (3) at (0,{sqrt(2)},0);
%         \coordinate [label=left:$v_2$] (2) at ({-.5*sqrt(3)},0,-.5);
%         \coordinate [label=below:$v_1$] (1) at (0,0,1);
%         \coordinate [label=right:$v_0$] (0) at ({.5*sqrt(3)},0,-.5);
%         \begin{scope}[decoration={markings,mark=at position 0.5 with --}]
%             \draw[densely dotted,postaction={decorate}] (0)--(2);
%             \draw[fill=lightgray,fill opacity=.5] (1)--(0)--(3)--cycle;
%             \draw[fill=gray,fill opacity=.5] (2)--(1)--(3)--cycle;
%             \draw[postaction={decorate}] (1)--(0);
%             \draw[postaction={decorate}] (1)--(2);
%             \draw[postaction={decorate}] (2)--(3);
%             \draw[postaction={decorate}] (1)--(3);
%             \draw[postaction={decorate}] (0)--(3);
%         \end{scope}
    \end{tikzpicture}
	\begin{tikzpicture}
        \def\y{1}\def\x{3}
        \tikzstyle{sp}=[circle,fill=black,scale=0.3]
        \tikzstyle{se}=[thick, bunired, -latex]
        \node (p123) at (0*\x,3*\y){\begin{tikzpicture}[scale=0.5]\node[sp] (0) at (0,0){};\node[sp] (1) at (1,0){};\node[sp] (2) at (2,0){};\node[sp] (3) at (3,0){};\draw[se] (0) -- (1);\draw[se] (1) -- (2);\draw[se] (2) -- (3);\end{tikzpicture}};
        
        \node (p12) at (-1*\x,2*\y){\begin{tikzpicture}[scale=0.5]\node[sp] (0) at (0,0){};\node[sp] (1) at (1,0){};\node[sp] (2) at (2,0){};\node[sp] (3) at (3,0){};\draw[se] (0) -- (1);\draw[se] (1) -- (2);\end{tikzpicture}};
        \node (p13) at (0*\x,2*\y){\begin{tikzpicture}[scale=0.5]\node[sp] (0) at (0,0){};\node[sp] (1) at (1,0){};\node[sp] (2) at (2,0){};\node[sp] (3) at (3,0){};\draw[se] (0) -- (1);\draw[se] (2) -- (3);\end{tikzpicture}};
        \node (p23) at (1*\x,2*\y){\begin{tikzpicture}[scale=0.5]\node[sp] (0) at (0,0){};\node[sp] (1) at (1,0){};\node[sp] (2) at (2,0){};\node[sp] (3) at (3,0){};\draw[se] (1) -- (2); \draw[se] (2) -- (3);\end{tikzpicture}};
        \node (p1) at (-1*\x,1*\y){\begin{tikzpicture}[scale=0.5]\node[sp] (0) at (0,0){};\node[sp] (1) at (1,0){};\node[sp] (2) at (2,0){};\node[sp] (3) at (3,0){};\draw[se] (0) -- (1);\end{tikzpicture}};
        \node (p2) at (0*\x,1*\y){\begin{tikzpicture}[scale=0.5]\node[sp] (0) at (0,0){};\node[sp] (1) at (1,0){};\node[sp] (2) at (2,0){};\node[sp] (3) at (3,0){};\draw[se] (1) -- (2);\end{tikzpicture}};
        \node (p3) at (1*\x,1*\y){\begin{tikzpicture}[scale=0.5]\node[sp] (0) at (0,0){};\node[sp] (1) at (1,0){};\node[sp] (2) at (2,0){};\node[sp] (3) at (3,0){};\draw[se] (2) -- (3);\end{tikzpicture}};
        \node (p0) at (0*\x,0*\y){\begin{tikzpicture}[scale=0.5]\node[sp] (0) at (0,0){};\node[sp] (1) at (1,0){};\node[sp] (2) at (2,0){};\node[sp] (3) at (3,0){};\end{tikzpicture}};
        \draw[thick, dotted] (p123) -- (p12) -- (p1) -- (p0);
        \draw[thick,dotted] (p123) -- (p23) -- (p2) -- (p0);
        \draw[thick,dotted] (p123) -- (p13) -- (p3) -- (p0);
        \draw[thick,dotted] (p12) -- (p2);
        \draw[thick,dotted] (p23) -- (p3);
        \draw[thick,dotted] (p13) -- (p1);
	\end{tikzpicture}

	\caption{The coherently oriented linear graph $\tI_3$ (top left), the multipath complex $X(\tI_3)$ (top right), and the path poset $P(\tI_3)$ (bottom).}
	\label{fig:nstep}
\end{figure}
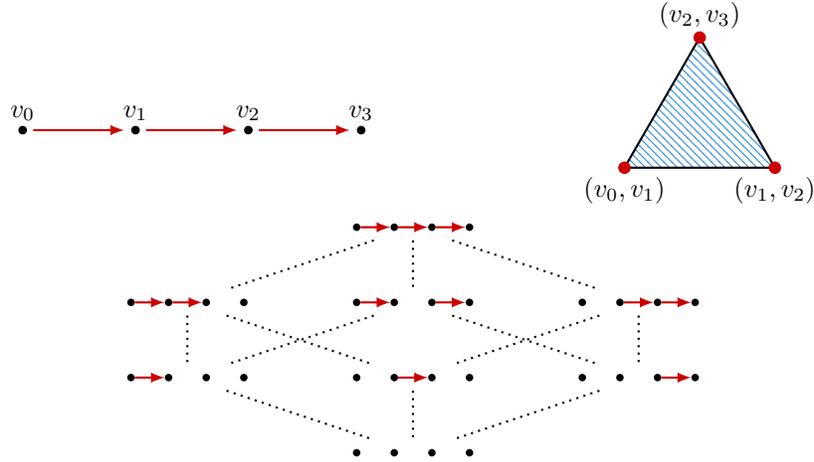

We are interested in disjoint sets of simple paths; following~\cite{turner}, we call them multipaths:

\begin{defn}\label{def:multipaths}
A \emph{multipath} of a digraph~$\tG$ is a spanning subgraph such that each connected component is either a vertex or a simple path. The \emph{length} of a multipath is the number of its edges. 
\end{defn}

The set of multipaths of $\tG$ has a natural partially ordered structure: the \emph{path poset} of $\tG$ is the poset $(P(\tG),<)$, that is, the set of multipaths of $\tG$ (including the multipath with no edges) ordered by the relation of ``being a subgraph''. 
Note that the underlying set of $P(\tG)$ is given by all disjoint unions of simple paths -- as opposed to all connected paths, as in~\cite[Section~3.1]{https://doi.org/10.48550/arxiv.2205.03730}.
To the path poset we can associate a simplicial complex, which we call the multipath complex  -- cf.~\cite[Definition~6.4]{secondo}:

\begin{defn}\label{def:pathcomplx}
For a digraph $\tG$, the \emph{multipath complex} $X(\tG)$ is the simplicial complex whose face poset (augmented to include the empty simplex~$\emptyset$) is the path poset $P(\tG)$. 
\end{defn}

Since being a multipath is a monotone property of digraphs~(for a description of monotone properties, see \cite{bjorner_cdg}, and the references therein), it follows that $X(\tG)$ is a well-defined simplicial complex.
The following is straightforward:

\begin{example}[{\cite[Example~6.12]{secondo}}]
Consider the coherently oriented linear graph $\tI_n$ -- see Figure~\ref{fig:nstep} for an example of $I_3$.
The path poset~$(P(\tI_n), < )$ is isomorphic to the Boolean poset~$\mathbb{B}(n)$. Thence, the associated multipath complex is an $(n-1)$-dimensional simplex.
Consider the coherently oriented polygonal graph $\tP_n$ with $n$ edges, obtained from $\tI_{n}$ by identifying the vertices $v_0$ and $v_{n}$. Then, the path poset $(P(\tP_n),<)$ is isomorphic to the Boolean poset $\mathbb{B}(n)$ minus its maximum, and the corresponding multipath complex is a $(n-2)$-dimensional sphere.
\end{example}

Another class of directed graphs important to us is given by the dandelion graphs:
\begin{defn}\label{dandelion}
Let $\tD_{n,m}$ be the digraph on $(n+m+1)$ vertices  and $(m+n)$ edges defined as follows:
\begin{enumerate}
    \item $V(\tD_{n,m}) = \{ v_{0}, w_{1} ,\dots, w_{n}, x_{1},\dots, x_{m} \}$;
    \item $E(\tD_{n,m}) = \{ (w_i,v_0), (v_{0},x_j) \mid i=1,\dots,n; j=1,\dots,m \}$.
\end{enumerate}
The digraph $\tD_{n,m}$ is called a \emph{dandelion graph} -- cf.~Figure~\ref{fig:nmgraph}. A dandelion graph of the form $\tD_{n,0}$ (resp.~ $\tD_{0,m}$) is called \emph{sink graph} (resp.~\emph{source graph}).
\end{defn}

\begin{figure}[h]
\centering
	\begin{tikzpicture}[scale=0.4][baseline=(current bounding box.center)]
		\tikzstyle{point}=[circle,thick,draw=black,fill=black,inner sep=0pt,minimum width=2pt,minimum height=2pt]
		\tikzstyle{arc}=[shorten >= 8pt,shorten <= 8pt,->, thick]
		\def\x{15}
		\node (v0) at (0-\x,0) {};
		\node[above] at (v0) {$v_0$};
		\draw[fill] (v0)  circle (.05);
		\node (w1) at (-2-\x,1.5) {};
		\node[above] at (w1) {$w_1$};
		\draw[fill] (w1)  circle (.05);
		\node  (w2) at (-2-\x,0) { };
		\node[left]  at (w2) {$w_{2}$};
		\draw[fill] (w2)  circle (.05);
		\node  (w3) at (-2-\x,-1.5) { };
		\node[below]  at (w3) {$w_{3}$};
		\draw[fill] (w3)  circle (.05);
		
		\node  (x1) at (2-\x,1.3) { };
		\node[right]  at (x1) {$x_1$};
		\draw[fill] (x1)  circle (.05);
		
		\node  (x2) at (2-\x,-1.3) { };
		\node[right]  at (x2) {$x_2$};
		\draw[fill] (x2)  circle (.05);

		\draw[thick, bunired, -latex] (w1) -- (v0);
		\draw[thick, bunired, -latex] (w2) -- (v0);
		\draw[thick, bunired, -latex] (w3) -- (v0);
		\draw[thick, bunired, -latex] (v0) -- (x1);
		\draw[thick, bunired, -latex] (v0) -- (x2);
		
		\node[minimum size=2cm,regular polygon,regular polygon sides=5] (p) {};
% 		\node (e1) at (p.corner 1) {};
% 		\node (e2) at (p.corner 2) {};
% 		\node (e3) at (p.corner 3) {};
% 		\node (e4) at (p.corner 4) {};
% 		\node (e5) at (p.corner 5) {};
% 		\draw[fill=bunired,draw=black,thick] (e1.center) -- (e2.center) -- (e3.center) -- (e4.center)-- (e5.center)-- (e1.center);
        \draw[draw=black,thick] (p.corner 1) -- (p.corner 2) -- (p.corner 4) -- (p.corner 5) -- (p.corner 3) -- (p.corner 2);
        \draw[draw=black,thick] (p.corner 5) -- (p.corner 1);
		\node[circle,fill=bunired,scale=0.5] at (p.corner 1) {};\node[above] at (p.corner 1) {\small$(w_1,v_0)$};
        \node[circle,fill=bunired,scale=0.5] at (p.corner 2) {};\node[left] at (p.corner 2) {\small$(v_0,x_1)$};
        \node[circle,fill=bunired,scale=0.5] at (p.corner 3) {};\node[below] at (p.corner 3) {\small$(w_2,v_0)$};
        \node[circle,fill=bunired,scale=0.5] at (p.corner 4) {};\node[below] at (p.corner 4) {\small$(w_3,v_0)$};
        \node[circle,fill=bunired,scale=0.5] at (p.corner 5) {};\node[right] at (p.corner 5) {\small$(v_0,x_2)$};
	\end{tikzpicture}
	\vskip 30pt
	\begin{tikzpicture}[scale=0.9]
        \def\y{2}\def\x{3}
        \tikzstyle{sp}=[circle,fill=black,scale=0.3]
        \tikzstyle{se}=[thick, bunired, -latex]

        \node (a1) at (-2.5*\x,2*\y){\begin{tikzpicture}[scale=0.3]\node[sp] (v0) at (0,0){};\node[sp] (x1) at (2,.75){};\node[sp] (x2) at (2,-.75){};\node[sp] (w1) at (-2,1){};\node[sp] (w2) at (-2,0){};\node[sp] (w3) at (-2,-1){};\draw[se] (v0) -- (x1);\draw[se] (w1) -- (v0);\end{tikzpicture}};
        \node (a2) at (-1.5*\x,2*\y){\begin{tikzpicture}[scale=0.3]\node[sp] (v0) at (0,0){};\node[sp] (x1) at (2,.75){};\node[sp] (x2) at (2,-.75){};\node[sp] (w1) at (-2,1){};\node[sp] (w2) at (-2,0){};\node[sp] (w3) at (-2,-1){};\draw[se] (v0) -- (x1);\draw[se] (w2) -- (v0);\end{tikzpicture}};
        \node (a3) at (-0.5*\x,2*\y){\begin{tikzpicture}[scale=0.3]\node[sp] (v0) at (0,0){};\node[sp] (x1) at (2,.75){};\node[sp] (x2) at (2,-.75){};\node[sp] (w1) at (-2,1){};\node[sp] (w2) at (-2,0){};\node[sp] (w3) at (-2,-1){};\draw[se] (v0) -- (x1);\draw[se] (w3) -- (v0);\end{tikzpicture}};
        \node (a4) at (0.5*\x,2*\y){\begin{tikzpicture}[scale=0.3]\node[sp] (v0) at (0,0){};\node[sp] (x1) at (2,.75){};\node[sp] (x2) at (2,-.75){};\node[sp] (w1) at (-2,1){};\node[sp] (w2) at (-2,0){};\node[sp] (w3) at (-2,-1){};\draw[se] (v0) -- (x2);\draw[se] (w1) -- (v0);\end{tikzpicture}};
        \node (a5) at (1.5*\x,2*\y){\begin{tikzpicture}[scale=0.3]\node[sp] (v0) at (0,0){};\node[sp] (x1) at (2,.75){};\node[sp] (x2) at (2,-.75){};\node[sp] (w1) at (-2,1){};\node[sp] (w2) at (-2,0){};\node[sp] (w3) at (-2,-1){};\draw[se] (v0) -- (x2);\draw[se] (w2) -- (v0);\end{tikzpicture}};
        \node (a6) at (2.5*\x,2*\y){\begin{tikzpicture}[scale=0.3]\node[sp] (v0) at (0,0){};\node[sp] (x1) at (2,.75){};\node[sp] (x2) at (2,-.75){};\node[sp] (w1) at (-2,1){};\node[sp] (w2) at (-2,0){};\node[sp] (w3) at (-2,-1){};\draw[se] (v0) -- (x2);\draw[se] (w3) -- (v0);\end{tikzpicture}};
        
        \node (b1) at (-2*\x,1*\y){\begin{tikzpicture}[scale=0.3]\node[sp] (v0) at (0,0){};\node[sp] (x1) at (2,.75){};\node[sp] (x2) at (2,-.75){};\node[sp] (w1) at (-2,1){};\node[sp] (w2) at (-2,0){};\node[sp] (w3) at (-2,-1){};\draw[se] (v0) -- (x1);\end{tikzpicture}};
        \node (b2) at (-1*\x,1*\y){\begin{tikzpicture}[scale=0.3]\node[sp] (v0) at (0,0){};\node[sp] (x1) at (2,.75){};\node[sp] (x2) at (2,-.75){};\node[sp] (w1) at (-2,1){};\node[sp] (w2) at (-2,0){};\node[sp] (w3) at (-2,-1){};\draw[se] (w1) -- (v0);\end{tikzpicture}};
        \node (b3) at (0*\x,1*\y){\begin{tikzpicture}[scale=0.3]\node[sp] (v0) at (0,0){};\node[sp] (x1) at (2,.75){};\node[sp] (x2) at (2,-.75){};\node[sp] (w1) at (-2,1){};\node[sp] (w2) at (-2,0){};\node[sp] (w3) at (-2,-1){};\draw[se] (w2) -- (v0);\end{tikzpicture}};
        \node (b4) at (1*\x,1*\y){\begin{tikzpicture}[scale=0.3]\node[sp] (v0) at (0,0){};\node[sp] (x1) at (2,.75){};\node[sp] (x2) at (2,-.75){};\node[sp] (w1) at (-2,1){};\node[sp] (w2) at (-2,0){};\node[sp] (w3) at (-2,-1){};\draw[se] (w3) -- (v0);\end{tikzpicture}};
        \node (b5) at (2*\x,1*\y){\begin{tikzpicture}[scale=0.3]\node[sp] (v0) at (0,0){};\node[sp] (x1) at (2,.75){};\node[sp] (x2) at (2,-.75){};\node[sp] (w1) at (-2,1){};\node[sp] (w2) at (-2,0){};\node[sp] (w3) at (-2,-1){};\draw[se] (v0) -- (x2);\end{tikzpicture}};
        
        \node (bot) at (0*\x,0*\y){\begin{tikzpicture}[scale=0.3]\node[sp] (v0) at (0,0){};\node[sp] (x1) at (2,.75){};\node[sp] (x2) at (2,-.75){};\node[sp] (w1) at (-2,1){};\node[sp] (w2) at (-2,0){};\node[sp] (w3) at (-2,-1){};\end{tikzpicture}};

        \draw[thick, dotted] (a1) -- (b2) -- (bot);
        \draw[thick, dotted] (a2) -- (b3) -- (bot);
        \draw[thick, dotted] (a3) -- (b4) -- (bot);
        \draw[thick, dotted] (a4) -- (b2) -- (bot);
        \draw[thick, dotted] (a5) -- (b3) -- (bot);
        \draw[thick, dotted] (a6) -- (b4) -- (bot);
        \draw[thick, dotted] (a1) -- (b1) -- (bot);
        \draw[thick, dotted] (a2) -- (b1);
        \draw[thick, dotted] (a3) -- (b1);
        \draw[thick, dotted] (a4) -- (b5) -- (bot);
        \draw[thick, dotted] (a5) -- (b5);
        \draw[thick, dotted] (a6) -- (b5);
	\end{tikzpicture}
	\caption{The dandelion graph $\tD_{3,2}$ (top left), its multipath complex $X(\tD_{3,2})$ (top right), and its path poset $P(\tD_{3,2})$ (bottom).}
	\label{fig:nmgraph}
\end{figure}
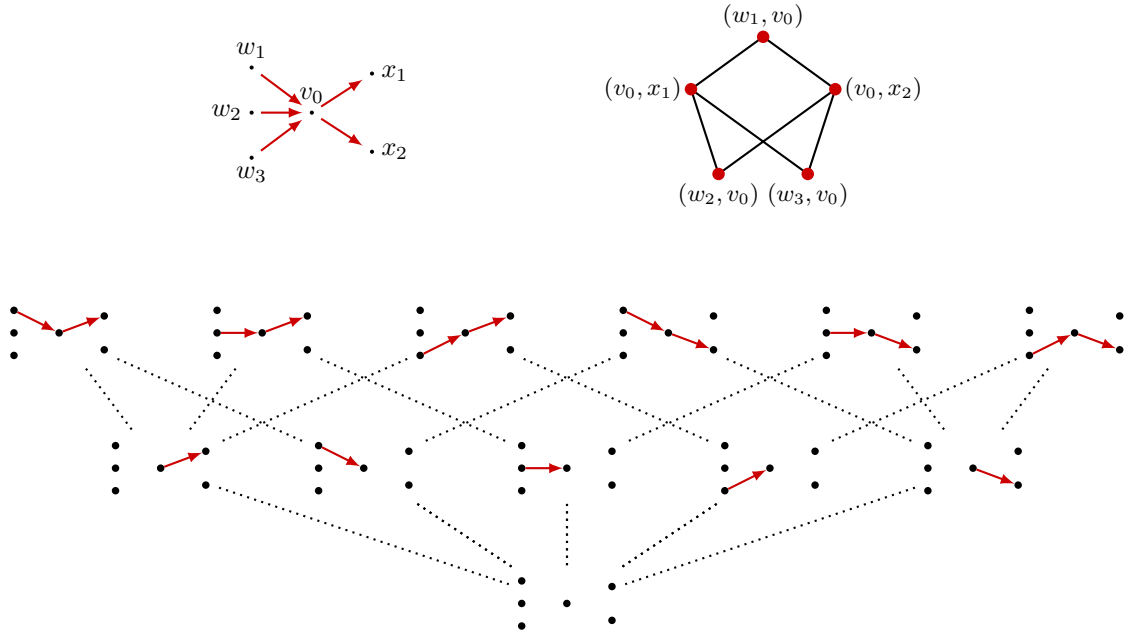

\begin{example}
The multipath complex $X(\tD_{n,m})$ of the dandelion graph $\tD_{n,m}$ is homotopy equivalent to the wedge of~$(n-1)(m-1)$ copies of the $1$-dimensional sphere if $n,m>1$ -- see Figure~\ref{fig:nmgraph} and \cite[Example~6.13]{secondo}. If either $n$ or $m$ is $1$, then $X(\tD_{n,m})$ is contractible -- cf.~\cite[Proposition~4.18]{secondo}. Finally, if either $n$ or $m$ is zero, and $m + n > 1$ (i.e.~if we have a source graph or a sink graph), then it is not difficult to check that $X(\tD_{n,m})$ is homotopy equivalent to the wedge of $n + m -1$ copies of the $0$-dimensional sphere. 
\end{example}

The \emph{order complex} $\Delta(P)$ of a poset $P$ is the simplicial complex whose faces are the chains of the poset. It is known that the order complex of the face poset of a complex $S$ is the barycentric subdivision of $S$. So the  order complex of the path poset $\bar{P}(\tG)=P(\tG)\setminus\{ \bar{\tK}_n\}$ (where $\bar{\tK}_n$ is the graph with $n$ vertices and no edges) is the barycentric subdivision of the multipath complex $X(\tG)$, as such, the order complex of~$\bar{P}(\tG)$ and the multipath complex $X(\tG)$ are homotopy equivalent. The reduced Euler characteristic $\widetilde{\chi}$ of the order complex of a poset is equal to the \emph{M\"obius function} of the poset, which is recursively defined as $\mu_P(u,u)=1$ and $$\mu_P(u,v)=-\sum_{u\le w<v}\mu(u,w)\ .$$
More precisely,  $\widetilde{\chi}(\Delta(P))=\mu(P):=\mu_{L(P)}(\hat{0},\hat{1})$, where $L(P)$ is obtained from $P$ by attaching a minimal element~$\hat{0}$ and a maximal element~$\hat{1}$.  
Therefore, if we consider $\hat{0}=\bar{\tK}_n$, then
\begin{equation}\label{eq:moebiuschar}
\widetilde{\chi}(X(\tG))=\widetilde{\chi}(\Delta(\bar{P}(\tG)))=\bar{\mu}(P(\tG)):=-\sum_{p\in P(G)}\mu(\bar{\tK}_n,p) .
\end{equation}
So, we can compute the reduced Euler characteristic of the multipath complex directly from the path poset. Note that throughout we refer to the reduced Euler characteristic simply as the Euler characteristic, and see \cite{Wac07} for further background on order complexes and the M\"obius function.

\begin{rem}
Denote by $\ast$ the join operation of simplicial complexes. Then, for $\tG$ and $\tH$ directed graphs, we have a homotopy equivalence 
\[
X(\tG\sqcup \tH)\simeq X(\tG)\ast X(\tH) \ ,
\]
where $\sqcup$ denotes the disjoint union of digraphs.
\end{rem}

We conclude this section with a relation between multipath complexes and matching complexes for certain families of digraphs. The latter is the simplicial complex whose simplices are collections of disjoint edges in an unoriented graph. We first need the notion of alternating orientations.
Given an orientation $o$ on an undirected graph $\tG$, we denote by $\tG_o$ the corresponding digraph. 

\begin{defn}
 An orientation $o$ on $\tG$ is called \emph{alternating} if there exists a partition~$V\sqcup W$ of $V(\tG_o)$ such that all elements of  $V$ have indegree $0$ and all elements of $W$ have outdegree $0$.
\end{defn}

Note that the existence of an alternating orientation implies that $\tG$ is a \emph{bipartite} graph (that is there exists a function $f\colon V(\tG) \to \{ 0,1\}$ that assumes distinct values on vertices which share an edge in $\tG$).
As mentioned above, alternating orientations can be used to create a bridge between multipath complexes of digraphs and the matching complexes of the underlying undirected graphs. We recall that a \emph{matching} on a graph $\tG$ is a collection of edges without common vertices. The \emph{matching complex} $M(\tG)$ is the simplical complex whose simplices are matchings on $\tG$ -- see also~\cite{torsion}. 

\begin{prop}[{\cite[Theorem~4.1]{SpriSecondo}}]
Let $\tG$ be a graph and $o$ an orientation on $\tG$. Then, we have an isomorphism of simplicial complexes \[M(\tG) \cong X(\tG_o)\] if and only if $o$ is alternating.
\end{prop}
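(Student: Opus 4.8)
The plan is to regard both $M(\tG)$ and $X(\tG_o)$ as simplicial complexes on the common vertex set $E(\tG) = E(\tG_o)$ (a matching and a multipath are both prescribed by a subset of edges, the orientation being irrelevant to the vertex labelling), and to compare their faces directly. The first observation I would record is that every matching is a multipath: a matching is a spanning subgraph all of whose connected components are isolated vertices or single edges, and a single edge is a simple path of length $1$. Hence $M(\tG)$ is a subcomplex of $X(\tG_o)$ on this vertex set, and the content of the statement is to decide when this inclusion is an equality (equivalently, an isomorphism).

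The key combinatorial lemma I would isolate is the equivalence, for a digraph $\tG_o$, of the following three conditions: (i) $o$ is alternating; (ii) no vertex of $\tG_o$ has both positive indegree and positive outdegree; (iii) $\tG_o$ contains no directed simple path of length $\geq 2$. The implication (ii) $\Rightarrow$ (i) is obtained by taking $V$ to be the vertices of indegree $0$ and $W$ its complement: by (ii) every vertex of $W$ has outdegree $0$, which gives the required partition; conversely (i) $\Rightarrow$ (ii) is immediate, since a vertex with positive indegree lies in $W$ and hence has outdegree $0$. For (ii) $\Leftrightarrow$ (iii), note that any intermediate vertex $t(e_i)=s(e_{i+1})$ of a simple path has positive indegree and outdegree, while conversely an incoming edge $(u,v)$ together with an outgoing edge $(v,w)$ is a directed simple path of length $2$.

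Granting this lemma, the \emph{if} direction is quick: if $o$ is alternating then by (iii) every simple path in $\tG_o$ has length $\leq 1$, so every multipath has all its components equal to isolated vertices or single edges, i.e.\ is a matching. Thus the inclusion $M(\tG) \subseteq X(\tG_o)$ is surjective on faces and $M(\tG) = X(\tG_o)$, in particular they are isomorphic.

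For the \emph{only if} direction, the mild subtlety — and the main point requiring care — is that an abstract isomorphism $M(\tG) \cong X(\tG_o)$ need not fix the vertex set, so one cannot immediately conclude equality of the two complexes. I would bypass this using $f$-vectors: isomorphic simplicial complexes have the same number of faces in each dimension, and since $M(\tG)$ is a subcomplex of $X(\tG_o)$ on the \emph{same} vertex set, equality of the face counts in every dimension forces the inclusion to be an equality $M(\tG) = X(\tG_o)$. Now if $o$ were not alternating, then condition (iii) fails and $\tG_o$ admits a directed path $(u,v),(v,w)$ of length $2$; the corresponding pair of edges is a multipath, hence a $1$-face of $X(\tG_o)$, but it is not a matching since the two edges share the vertex $v$ — contradicting $M(\tG) = X(\tG_o)$. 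Hence $o$ is alternating. The $f$-vector step is precisely what reconciles ``abstract isomorphism'' with ``equality on the shared vertex set'' and lets the only-if direction go through without analysing an arbitrary relabelling of the vertices.
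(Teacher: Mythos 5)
Your proof is correct, but there is nothing in the paper to compare it against: the proposition is imported by citation from \cite[Theorem~4.1]{SpriSecondo}, and no proof of it appears in this paper at all. Taken on its own terms, your argument is sound and complete. The reduction to the key lemma (alternating $\Leftrightarrow$ no vertex with both positive indegree and positive outdegree $\Leftrightarrow$ no directed simple path of length $\geq 2$) is the right skeleton, and the $f$-vector device is a genuinely careful touch: it correctly handles the fact that the hypothesis is only an \emph{abstract} isomorphism $M(\tG)\cong X(\tG_o)$, which a priori need not restrict to the identity on the common vertex set $E(\tG)$; combined with the inclusion $M(\tG)\subseteq X(\tG_o)$ and finiteness, equality of the face counts in every dimension forces equality of the complexes. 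One step you should make explicit: in the implication ``an incoming edge $(u,v)$ together with an outgoing edge $(v,w)$ is a directed simple path of length $2$'' you need $u\neq w$, since by the paper's definition a simple path is not allowed to be a cycle. This is automatic in your setting --- $\tG$ is simple and $o$ assigns a single direction to each edge, so $\tG_o$ never contains both $(u,v)$ and $(v,u)$ --- but it is precisely where the lemma fails for general digraphs with bidirectional edges (compare the paper's $\tK_n$, where every vertex has positive in- and outdegree yet a pair of opposite edges forms a cycle, not a path), so it deserves a sentence.
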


A consequence of the proposition is that multipath complexes may have torsion -- cf.~\cite[Proposition~4.5]{SpriSecondo}.

\section{Euler characteristic of multipath complexes, and generating functions}\label{sec:generating}

The purpose of this section is to provide some examples and explicit computation of the Euler characteristic of the multipath complex of some families of  digraphs. We provide both explicit closed formulae and expressions for exponential generating functions.

 \subsection{Euler Characteristic of Complete Graph and Transitive Tournament}
 We begin by considering different orientations of the complete graph, and show that the Euler characteristic of the multipath complex of these graphs is closely linked to the number of set partitions, and their variations. First we introduce a lemma that is useful throughout.
 
Recall that the M\"obius function $\bar{\mu}(P(\tG))$ is equal to the Euler characteristic $\widetilde{\chi}(X(\tG))$ -- cf.~Equation~\ref{eq:moebiuschar}. For notational ease let $\mu(p):=\mu_{P(\tG)}(\bar{\tK}_n,p)$ when $\tG$ is clear. 
 
 \begin{lem}\label{lem:sign}
   Let $\tG$ be a digraph on $n$ vertices. For every $g\in P(\tG)$ we have $\mu(g)=(-1)^{n-k(g)}$, where $k(g)$ is the number of components of the multipath $g$.
    \end{lem}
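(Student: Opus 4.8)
The plan is to reduce the computation to the well-known value of the M\"obius function of a Boolean lattice, by showing that the principal order ideal below $g$ in $P(\tG)$ is Boolean. The first step is an elementary bookkeeping identity. If $g\in P(\tG)$ has components which are simple paths (with isolated vertices counted as paths of length $0$), and the $i$-th component has $e_i$ edges, then it spans $e_i+1$ vertices; summing over all $k(g)$ components and using that $g$ is a \emph{spanning} subgraph gives $\sum_i(e_i+1)=n$, so the length of $g$ satisfies $\ell(g)=n-k(g)$. Hence it suffices to prove $\mu(g)=(-1)^{\ell(g)}$.

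The main step is to identify the interval $[\bar{\tK}_n,g]$ in $P(\tG)$. The key observation is that deleting any subset of the edges of a disjoint union of simple paths again yields a disjoint union of simple paths and isolated vertices, i.e.\ a multipath; conversely, any $h\leq g$ in $P(\tG)$ is obtained in exactly this way and is determined by its edge set $E(h)\subseteq E(g)$. Therefore the assignment $h\mapsto E(h)$ is an isomorphism of posets from $[\bar{\tK}_n,g]$ onto the Boolean lattice $\mathbb{B}(\ell(g))$ on the $\ell(g)$ edges of $g$, carrying $\bar{\tK}_n$ to $\hat{0}$ and $g$ to $\hat{1}$. Invoking the standard fact $\mu_{\mathbb{B}(m)}(\hat{0},\hat{1})=(-1)^m$ then gives $\mu(g)=(-1)^{\ell(g)}=(-1)^{n-k(g)}$, as desired.

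I expect the only point requiring genuine care---the ``main obstacle,'' though a mild one---to be the verification that this interval is truly Boolean: namely that every edge-subset of $g$ corresponds to a legitimate element of $P(\tG)$, and that the subgraph order on $[\bar{\tK}_n,g]$ restricts to ordinary subset inclusion on edge sets. The reason this is not entirely automatic is that the definition of $P(\tG)$ imposes the multipath condition on its elements, so one must check that removing edges never violates it; fortunately this is immediate since removing an edge from a simple path only splits it into shorter simple paths.

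As an alternative to citing the Boolean M\"obius function, one could argue directly by induction on $\ell(g)$ using the recursion $\mu(g)=-\sum_{h<g}\mu(h)$: assuming $\mu(h)=(-1)^{|E(h)|}$ for all proper sub-multipaths, the sum over proper subsets combined with the binomial identity $\sum_{S\subseteq E(g)}(-1)^{|S|}=0$ (valid for $\ell(g)\geq 1$) yields $\mu(g)=(-1)^{\ell(g)}$, with base case $\mu(\bar{\tK}_n)=1=(-1)^0$. This avoids quoting the Boolean lattice explicitly but is the same computation in disguise, so I would present the interval-isomorphism argument as the cleaner route.
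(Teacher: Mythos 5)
Your proof is correct and takes essentially the same route as the paper's: both establish the edge count $\ell(g)=n-k(g)$, identify the interval $[\bar{\tK}_n,g]$ with the Boolean lattice $\mathbb{B}(\ell(g))$ via edge sets, and then invoke $\mu_{\mathbb{B}(m)}(\hat{0},\hat{1})=(-1)^m$. The only cosmetic difference is that the paper proves the edge count by induction (adding an edge merges two components), whereas you count vertices componentwise using the spanning property.
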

   \begin{proof}
Let $m$ be the number of edges in $g$, then $m=n-k(g)$. This can be seen by induction since if $k(g)=n$ then the graph has no edges, and adding an edge is equivalent to connecting two components in a multipath.
    
The interval $[\bar{\tK}_n,g]$ in $P(\tG)$ is isomorphic to the Boolean lattice $\mathbb{B}(m)$ since every multipath contained in $g$ is equivalent to a subset of the edges of $g$. It is known that $\mu_{\mathbb{B}(m)}(\min,\max)=(-1)^{m}$ (e.g.~\cite[Example~1.1.1]{Wac07}), so we have:
\[\mu(g)  = \mu_{\mathbb{B}(m)}(\min,\max)=(-1)^{m}=(-1)^{n-k(g)},\]
as desired.
\end{proof}

We start by computing the Euler characteristic of $X(\tK_n)$. These complexes were studied before, and are known to be highly connected, with a bound on connectivity which depends on $n$ -- cf.~\cite[Theorem~10]{Omega}.

 \begin{thm}\label{thm:complete}
 Let $\tK_n$ be the complete digraph on $n$ vertices, that is, with a bidirectional edge between every pair of vertices. Then  \begin{equation}\label{eq:lah}\widetilde{\chi}(X(\tK_n))=\sum_{k=1}^{n}(-1)^{n-k-1}\binom{n-1}{k-1}\frac{n!}{k!}\ ,\end{equation}
 which has the exponential generating function $e^{\frac{x}{x-1}}$.
 \begin{proof}
 Let $\Pi_{n,k}$ be all ordered partitions of $[n]=\{1,\ldots,n\}$ into $k$ parts and let $\Pi_n$ be all ordered partitions of $[n]$. 
  Define a function $f\colon P(\tK_n)\rightarrow\Pi_n$, where $f(g)$ is the ordered partition where each part of $f(g)$ is the vertices in a simple path of $g$, and the order of the part is given by the position of the vertex in the simple path. It is clear that $f$ is a bijection; its inverse is given by converting every part of a partition into a simple path, which makes a valid multipath as all simple paths are possible in $\tK_n$. 
  
  By Lemma~\ref{lem:sign} we know that $\mu(g)=(-1)^{n-k}$ for all $f(g)\in\Pi_{n,k}$ and it is known that $|\Pi_{n,k}|=\binom{n-1}{k-1}\frac{n!}{k!}$ -- these are the Lah numbers, see \cite{Pet07} or OEIS sequence A105278 \cite{OEIS}. So we get
  $$\widetilde{\chi}(X(\tK_n))= \bar{\mu}(P(\tK_n))=-\sum_{k=1}^{n}(-1)^{n-k}|\Pi_{n,k}|=\sum_{k=1}^{n}(-1)^{n-k-1}\binom{n-1}{k-1}\frac{n!}{k!}\ .$$
  
  If we replace $(-1)^{n-k-1}$ with $(-1)^{k-1}$ in Equation~\ref{eq:lah} we get OEIS Sequence A066668, which has exponential generating function $e^{\frac{x}{x+1}}$. 
  Since this corresponds to the sequence $(-1)^n\tilde{\chi}(\tK_n)$, we obtain the desired exponential generating function.
 \end{proof}
 \end{thm}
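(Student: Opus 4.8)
The plan is to compute $\widetilde{\chi}(X(\tK_n))$ directly from the path poset, exploiting that $\tK_n$ is maximally unconstrained: every sequence of distinct vertices is a directed simple path. By~\eqref{eq:moebiuschar} we have $\widetilde{\chi}(X(\tK_n)) = -\sum_{g\in P(\tK_n)}\mu(g)$, and Lemma~\ref{lem:sign} tells us each summand equals $(-1)^{n-k(g)}$, depending only on the number of components $k(g)$. So the first step is to sort multipaths by their number of components and rewrite
\[
\widetilde{\chi}(X(\tK_n)) = -\sum_{k=1}^{n}(-1)^{n-k}N_{n,k},
\]
where $N_{n,k}$ is the number of multipaths of $\tK_n$ with exactly $k$ components.

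The second step is to count $N_{n,k}$. A multipath of $\tK_n$ is a spanning subgraph whose components are directed simple paths (isolated vertices being paths of length $0$); since $\tK_n$ contains every directed edge, a component supported on a vertex set $S$ is exactly a choice of linear order on $S$, namely the order in which the path traverses $S$. Hence a $k$-component multipath is the same datum as a partition of $\{1,\dots,n\}$ into $k$ nonempty linearly ordered blocks, and these are counted by the (unsigned) Lah number $N_{n,k}=\binom{n-1}{k-1}\frac{n!}{k!}$. Substituting this in and using $-(-1)^{n-k}=(-1)^{n-k-1}$ gives the closed formula~\eqref{eq:lah}.

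For the generating function, the cleanest route is through the bivariate exponential generating function of the Lah numbers. By the exponential formula, a single nonempty list has EGF $\sum_{m\ge1}m!\,x^m/m!=x/(1-x)$, so marking each block with a variable $y$ yields $\sum_{n,k}\binom{n-1}{k-1}\frac{n!}{k!}y^k\frac{x^n}{n!}=e^{yx/(1-x)}$. Specialising at $y=-1$ produces the EGF of the alternating row sum $\sum_k(-1)^{k}N_{n,k}$, which up to an overall sign is the sequence $(-1)^n\widetilde{\chi}(X(\tK_n))$ obtained from~\eqref{eq:lah} by replacing $(-1)^{n-k-1}$ with $(-1)^{k-1}$. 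Finally, substituting $x\mapsto -x$ converts the generating function of $(-1)^n\widetilde{\chi}(X(\tK_n))$ into that of $\widetilde{\chi}(X(\tK_n))$, giving the exponential $e^{x/(x-1)}$ asserted in the statement.

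I expect the only genuine obstacle to lie in the bookkeeping of this last step. One must track the two competing sign conventions $(-1)^{n-k-1}$ and $(-1)^{k-1}$, fix a convention for the empty multipath (the $n=0$ constant term of the series), and check that the $y=-1$ specialisation composed with the reflection $x\mapsto -x$ reproduces the claimed closed form and not merely a series that agrees with it on a subsequence of coefficients. The combinatorial heart -- the bijection between $k$-component multipaths of $\tK_n$ and linearly ordered set partitions into $k$ blocks -- is immediate precisely because $\tK_n$ imposes no orientation constraints, and the Möbius value of each multipath is already handed to us by Lemma~\ref{lem:sign}, so all the delicacy is concentrated in the generating-function identity.
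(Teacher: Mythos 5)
Your first half is correct and coincides with the paper's argument: Lemma~\ref{lem:sign} reduces everything to counting multipaths by their number of components, and the bijection between $k$-component multipaths of $\tK_n$ and partitions of $[n]$ into $k$ nonempty linearly ordered blocks gives the Lah numbers, hence the closed formula~\eqref{eq:lah}. For the generating function you take a genuinely different route: the paper identifies the alternating Lah row sums with an OEIS entry and quotes its EGF, while you derive the bivariate EGF $e^{yx/(1-x)}$ from the exponential formula and specialise at $y=-1$; your route is the more self-contained of the two.

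But the final sign bookkeeping --- which you yourself flag as the one delicate point --- is exactly where the proposal fails, and the failure is not repairable. What your specialisation actually proves is that $e^{x/(x-1)}=e^{-x/(1-x)}$ is the EGF of $s_n:=\sum_k(-1)^k N_{n,k}$. Since $(-1)^{n-k-1}=(-1)^{n-1}(-1)^k$, we have $s_n=(-1)^{n-1}\widetilde{\chi}(X(\tK_n))$; hence the EGF of $(-1)^n\widetilde{\chi}(X(\tK_n))=-s_n$ is $-e^{x/(x-1)}$, and the substitution $x\mapsto-x$ in your last sentence produces $-e^{x/(x+1)}$ as the EGF of $\widetilde{\chi}(X(\tK_n))$, not $e^{x/(x-1)}$. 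No bookkeeping can close this, because a substitution $x\mapsto -x$ applied to $\pm e^{x/(x-1)}$ can only ever output $\pm e^{x/(x+1)}$, which is a different power series. A concrete check: $X(\tK_2)$ consists of two isolated points, so $\widetilde{\chi}(X(\tK_2))=1$, whereas $2!\,[x^2]\,e^{x/(x-1)}=-1$. In other words, $e^{x/(x-1)}$ generates the twisted sequence $(-1)^{n-1}\widetilde{\chi}(X(\tK_n))$ --- precisely the sequence your $y=-1$ specialisation computes --- and the literal EGF claim of the statement is only true up to this sign. (The paper's own proof is loose at the same spot: its identification of $(-1)^n\widetilde{\chi}(X(\tK_n))$ with the sequence whose EGF is $e^{x/(x+1)}$ likewise fails at even $n$.) So your last step asserts, rather than proves, the identification; what your method honestly establishes is the EGF $-e^{x/(x+1)}$ for $\widetilde{\chi}(X(\tK_n))$, equivalently $e^{x/(x-1)}$ for $(-1)^{n-1}\widetilde{\chi}(X(\tK_n))$.
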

 
 We believe that the multipath complex of the complete graph $\tK_n$ has the largest Euler characteristic of any graph with $n$ vertex. As such we make the following conjecture, which has been verified computationally for $n<8$ using~\cite{pathposet}.
 \begin{conj}
 Let $\tG$ be any digraph on $n$ vertices, then
 $\widetilde{\chi}(X(\tK_n))\ge \widetilde{\chi}(X(\tG))$.
 \end{conj}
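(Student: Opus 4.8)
The plan is to reformulate everything in terms of multipath counts and then attack the resulting alternating inequality. Write $c_m(\tG)$ for the number of length-$m$ multipaths of $\tG$, so that $c_0(\tG)=1$ and $c_1(\tG)=|E(\tG)|$. By Lemma~\ref{lem:sign} together with \eqref{eq:moebiuschar}, grouping the multipaths $g\in P(\tG)$ by their number of edges $m=n-k(g)$ gives
\[
\widetilde{\chi}(X(\tG)) \;=\; -\sum_{g\in P(\tG)}(-1)^{n-k(g)} \;=\; \sum_{m\ge 0}(-1)^{m-1}c_m(\tG).
\]
Since $\tK_n$ is the complete digraph on $[n]$, every multipath of any digraph $\tG$ on the same vertex set is again a multipath of $\tK_n$; hence $c_m(\tG)\le c_m(\tK_n)$ for every $m$. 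Setting $d_m:=c_m(\tK_n)-c_m(\tG)\ge 0$, the conjecture is equivalent to the purely combinatorial assertion
\[
\widetilde{\chi}(X(\tK_n))-\widetilde{\chi}(X(\tG)) \;=\; \sum_{m\ge 1}(-1)^{m-1}d_m \;\ge\; 0,
\]
that is, the alternating sum of the nonnegative excess-counts $d_1,d_2,\dots$ is nonnegative.

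My main tool would be a sign-reversing (discrete Morse) matching on the set $\mathcal{E}(\tG)$ of multipaths that lie in $\tK_n$ but not in $\tG$ -- the multipaths counted by the $d_m$. Fix a linear order on the directed edges of $\tK_n$. For $g\in\mathcal{E}(\tG)$ I would attempt to pair $g$ with the multipath obtained by toggling (adding or deleting) the smallest edge whose toggle keeps the result inside $\mathcal{E}(\tG)$; matched pairs differ in length by one and so cancel in the alternating sum. If this rule can be shown to be well defined and acyclic, the contribution of $\mathcal{E}(\tG)$ reduces to that of its unmatched critical multipaths, and it would then suffice to prove that the critical cells occur only in odd length (even dimension), so that their signed contribution is $\ge 0$. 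An alternative, inductive formulation deletes the missing directed edges of $\tK_n$ one at a time and tracks the change $\widetilde{\chi}(X(\tH))-\widetilde{\chi}(X(\tH-e))$, which records with sign the multipaths through $e$; one would try to maintain the comparison with $\tK_n$ as a loop invariant.

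The hard part is precisely the control of signs. Term-wise domination $d_m\ge 0$ does \emph{not} imply $\sum_m(-1)^{m-1}d_m\ge 0$, and $\widetilde{\chi}(X(\tK_n))-\widetilde{\chi}(X(\tG))$ need not behave monotonically under edge deletion, so neither the matching nor the induction can be pushed through by soft counting alone. The real content is to exhibit, uniformly over all $2^{n(n-1)}$ spanning subgraphs $\tG\le\tK_n$, a canonical pairing in which every even-length excess multipath is absorbed by an odd-length one; equivalently, to rule out a net surplus of even-length multipaths among the excess. Establishing such a uniform structural statement, rather than any single computation, is the crux, and is presumably why the inequality is only conjectural and has so far been confirmed only by the computations of \cite{pathposet} for $n<8$.
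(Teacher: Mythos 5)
You should first note that the statement you were asked to prove is presented in the paper as a \emph{conjecture}: the authors give no proof, only computational verification for $n<8$ via \cite{pathposet}, so there is no argument of theirs to match. Your preliminary reduction is correct and worth keeping: by Lemma~\ref{lem:sign} together with \eqref{eq:moebiuschar} one indeed has $\widetilde{\chi}(X(\tG))=\sum_{m\ge 0}(-1)^{m-1}c_m(\tG)$, every multipath of a digraph on the vertex set of $\tK_n$ is a multipath of $\tK_n$, and hence the conjecture is equivalent to $\sum_{m\ge 1}(-1)^{m-1}d_m\ge 0$ with $d_m=c_m(\tK_n)-c_m(\tG)\ge 0$. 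But from there the proposal is a research plan, not a proof, and the gap is genuine. Concretely: (i) your toggle rule is never shown to be an involution --- after toggling the smallest admissible edge of an excess multipath $g$, the smallest admissible edge of the result need not be the edge you just toggled (deleting an edge can make a smaller, previously blocked edge addable, and vice versa), so the ``matched pairs'' may fail to pair up at all; this is the standard failure mode of greedy matchings and it is not addressed. (ii) Even granting a well-defined sign-reversing involution (note that acyclicity is irrelevant for a pure Euler-characteristic cancellation; it matters only if you want homotopy-type conclusions), the assertion that all critical cells occur in odd length is precisely the conjecture restated, and you supply no mechanism that would force this parity uniformly over all spanning subgraphs of $\tK_n$. (iii) The inductive variant requires controlling the sign of $\sum_{p\ni e}\mu(p)$ under deletion of an arbitrary edge, and no invariant is proposed; the paper's own Theorem~\ref{thm:transitive2} illustrates how delicate this is, since even reversing a single edge of a transitive tournament needs a bespoke partition into sets $A$, $B$, $C$ with an explicit sign-reversing bijection $\phi$ to tame exactly these signs.

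To your credit, you identify this weakness yourself; but as written the proposal establishes nothing beyond the (correct) alternating-sum identity and the termwise bound $c_m(\tG)\le c_m(\tK_n)$, which, as you observe, do not imply the inequality. Until the pairing is constructed and its critical cells are shown to carry nonnegative sign --- or some other uniform structural statement is proved --- the statement remains exactly where the paper leaves it: an open conjecture supported by machine computation for $n<8$.
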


The \emph{transitive tournament} on $n$ vertices is the unique (up to isomorphism) orientation of the complete undirected graph with no directed cycles. This is equivalent to taking the complete undirected graph and orient all edges from smaller vertex index to larger. We now show that the Euler characteristic of the multipath complex of transitive tournaments is given by a variation of the complementary Bell numbers, that is, the alternating sum of the Stirling numbers.

 \begin{thm}\label{thm:transitive}
 Let $\tT_n$ be the transitive tournament on $n$ vertices. Then  \begin{equation}\label{eq:stirl}\widetilde{\chi}(X(\tT_n))=\sum_{k=1}^{n}(-1)^{n-k-1}S(n,k)\ ,\end{equation}
 where $S(n,k)$ are the Stirling numbers of the second kind and sequence given by Equation~\eqref{eq:stirl} has the exponential generating function $-e^{1-e^{-x}}$.
 \begin{proof}
 Let $\Pi_{n,k}$ be all partitions of $[n]$ into $k$ parts and let $\Pi_n$ be all partitions of $[n]$. Proceeding as in the previous proof, define a function $f\colon P(\tT_n)\rightarrow\Pi_n$, where $f(g)$ is the partition where each part of $f(g)$ is the vertices in a simple path of $g$. It is clear that $f$ is a bijection as the inverse is given by converting every part of a partition into a simple path, and in a transitive tournament there is a unique way to make a simple path from a set of vertices.
  
  By Lemma~\ref{lem:sign} we know that $\mu(g)=(-1)^{n-k}$ for all $f(g)\in\Pi_{n,k}$. Therefore,
  $$\widetilde{\chi}(X(\tT_n))=\bar{\mu}(P(\tK_n))=-\sum_{k=1}^{n}(-1)^{n-k}|\Pi_{n,k}|=\sum_{k=1}^{n}(-1)^{n-k-1}S(n,k)\,$$
  since the number of partitions is exactly the Stirling numbers of the second kind.
  
  The alternating sum of the Stirling numbers are known as the complementary Bell numbers, sequence A000587 in the OEIS \cite{OEIS}, for which the exponential generating function is $e^{1-e^{x}}$. However, we have $(-1)^{n-k-1}$ instead of~$(-1)^k$ so we must negate the even term in the sequence obtaining the exponential generating $-e^{1-e^{-x}}$.
 \end{proof}
 \end{thm}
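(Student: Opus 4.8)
The plan is to mirror the proof of Theorem~\ref{thm:complete}, computing the Euler characteristic through the Möbius-function formula~\eqref{eq:moebiuschar} together with the sign computation of Lemma~\ref{lem:sign}, and then identifying the combinatorial objects indexing the multipaths of $\tT_n$. Concretely, I would first define a map $f\colon P(\tT_n)\to\Pi_n$ from the path poset to the set of (unordered) set partitions of $[n]=\{1,\dots,n\}$, sending a multipath $g$ to the partition whose blocks are the vertex sets of the connected components (simple paths and isolated vertices) of $g$.

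The heart of the argument, and the feature distinguishing $\tT_n$ from $\tK_n$, is to check that $f$ is a bijection. For this I would argue that any nonempty subset $B\subseteq[n]$ is the vertex set of exactly one simple path in $\tT_n$: since $\tT_n$ is acyclic and transitively oriented, its edges point from smaller to larger index, so the unique simple path on $B$ lists the elements of $B$ in increasing order, and no other ordering can occur without producing an edge against the orientation. Hence converting each block of a partition into its increasing path gives a well-defined inverse to $f$, establishing the bijection. (By contrast, in $\tK_n$ every linear ordering of $B$ yields a distinct admissible path, so there one is forced to count ordered partitions, i.e.\ the Lah numbers, rather than ordinary partitions.) Under $f$, a multipath with $k$ components corresponds to a partition with $k$ blocks, so Lemma~\ref{lem:sign} gives $\mu(g)=(-1)^{n-k}$; since the number of partitions of $[n]$ into $k$ blocks is the Stirling number $S(n,k)$, formula~\eqref{eq:moebiuschar} yields
\[
\widetilde{\chi}(X(\tT_n)) = -\sum_{k=1}^{n}(-1)^{n-k}S(n,k) = \sum_{k=1}^{n}(-1)^{n-k-1}S(n,k),
\]
which is the claimed closed form~\eqref{eq:stirl}.

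It then remains to extract the exponential generating function. I would recognize $a_n:=\sum_{k}(-1)^{k}S(n,k)$ as the complementary Bell numbers, whose exponential generating function is the classical $A(x)=e^{1-e^{x}}$ (obtained by setting the marking variable to $-1$ in $\sum_{n,k}S(n,k)y^{k}x^{n}/n!=e^{y(e^{x}-1)}$). Writing $(-1)^{n-k-1}=(-1)^{n-1}(-1)^{k}$ shows $\widetilde{\chi}(X(\tT_n))=(-1)^{n-1}a_n$, whence $\sum_{n}\widetilde{\chi}(X(\tT_n))x^{n}/n!=-\sum_{n}a_n(-x)^{n}/n!=-A(-x)=-e^{1-e^{-x}}$. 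The main obstacle is not computational but conceptual: correctly pinning down which combinatorial species indexes $P(\tT_n)$, that is, verifying the uniqueness of the increasing simple path on each vertex subset so that $f$ lands on unordered rather than ordered partitions; once this is secured, Lemma~\ref{lem:sign} and the standard sign bookkeeping in the generating-function step finish the proof.
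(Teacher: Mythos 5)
Your proposal is correct and follows essentially the same route as the paper's proof: the bijection $f\colon P(\tT_n)\to\Pi_n$ onto unordered set partitions, the sign computation via Lemma~\ref{lem:sign}, and the identification with the complementary Bell numbers to extract the exponential generating function. Your explicit justification that each vertex subset supports a unique (increasing) simple path, and your cleaner substitution $-A(-x)$ for the generating-function step, are welcome refinements of details the paper only asserts, but they do not constitute a different argument.
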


 Next we consider what happens if we reverse a single edge of the transitive tournament, in particular the edge $(1,n)$.
 
  \begin{thm}\label{thm:transitive2}
  Let $\tR_n$ be the graph obtained from the transitive tournament $\tT_n$ by reversing the orientation of the edge~$(1,n)$.
 For $n\ge3$ we get:
\begin{equation}\label{eq:aTT}\widetilde{\chi}(X(\tR_n))=\sum_{k=1}^{n-2}(-1)^{n-k-1}k S(n-2,k)\ ,\end{equation}
 where $S(n,k)$ are the Stirling numbers of the second kind, and $(1-e^{-x})e^{1-e^{-x}}$ is the exponential generating function for the sequence $a_n=\widetilde{\chi}(X(\tR_{n+2}))$.
 \begin{proof}
  Partition the elements of $P(\tR_n)$ into three parts $A$, $B$ and $C$, where
  \begin{enumerate}
      \item $A$ is the set of all multipaths which contain the edge $(n,1)$;
      \item $B$ is the set of multipaths which do not contain the edge $(n,1)$, but $(n,1)$ can be added to make a multipath;
      \item $C$ is the set of multipaths which do not contain the edge $(n,1)$, and $(n,1)$ cannot be added to make a multipath.
  \end{enumerate}
  Define a function $\phi\colon A\rightarrow B$ where $\phi(x)$ is the multipath obtained by removing the edge $(n,1)$
  from $x$, for all $x\in A$. Then $\phi$ has a clear inverse, which is to add in the edge $(n,1)$, so this is a
  bijection. Moreover, by Lemma~\ref{lem:sign} we get that $\mu(\phi(x))=-\mu(x)$. 
  Therefore $\sum_{x\in A}\mu(x)+\sum_{x\in B}\mu(x)=0$ so
  $$\bar{\mu}(P(\tR_n))=-\sum_{x\in P(\tR_n)}\mu(x)=-\left(\sum_{x\in A}\mu(x)+\sum_{x\in B}\mu(x)+\sum_{x\in C}\mu(x)\right)=-\sum_{x\in C}\mu(x)\ .$$
  
  Now consider the elements of $C$. If adding the edge $(n,1)$ is forbidden it must either make a cycle
  or cause a vertex to have in or out degree greater than 1. It is not possible for $n$ to have out-degree greater
  than 1, since in $\tR_n$ there is only one outgoing edge from $n$, which is $(n,1)$, similarly $1$ cannot have in-degree greater than
  1. So every element of $c\in C$ must forbid $(n,1)$ because adding it would make a cycle, which means $c$ must
  contain a path from $1$ to $n$. 
  
  Therefore, every multipath of $C$ can be constructed by taking a multipath $g$ on $[2,n-1]:=\{2,\ldots,n-1\}$,
  selecting one of the simple paths of $g$, connecting $1$ to the
  start of the simple path, and connecting the end of the simple path to $n$.
  Note that graph induced on $\tR_n$ by vertices $[2,n-1]$ is a transitive tournament, and by the proof of Theorem~\ref{thm:transitive} there are $S(n-2,k)$ multipaths on $[2,n-1]$ with $k$ components. From each of these we can construct $k$ elements of $C$, so we get $kS(n-2,k)$ multipaths in $C$ with $k$ components, and by Lemma~\ref{lem:sign} each such element $x$ has $\mu(c)=(-1)^{n-k}$, so we get
  $$\widetilde{\chi}(X(\tR_n))=\bar{\mu}(P(\tR_n))=-\sum_{x\in C}\mu(x)=-\sum_{k=1}^{n-2}(-1)^{n-k}kS(n-2,k).$$
  
 The OEIS sequence A101851 \cite{OEIS} is given by $a_n=\sum_{k=1}^{n}(-1)^{n-k}kS(n,k)$ and has exponential generating function ${(e^{-x}-1)e^{1-e^{-x}}}$. Considering the sequence $-a_{n}$, instead of $a_n$, gives the required function.
 \end{proof}
 \end{thm}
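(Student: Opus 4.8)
The plan is to compute $\widetilde{\chi}(X(\tR_n))$ directly from the path poset via Equation~\eqref{eq:moebiuschar}, namely $\widetilde{\chi}(X(\tR_n)) = -\sum_{p \in P(\tR_n)} \mu(p)$, evaluating each $\mu(p)$ through Lemma~\ref{lem:sign} as $(-1)^{n-k(p)}$. The key structural observation is that $\tR_n$ differs from the transitive tournament $\tT_n$ only in the single reversed edge $(n,1)$, so I would exploit this edge to collapse most of the sum by a sign-reversing pairing.

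First I would partition $P(\tR_n)$ into three classes according to the behaviour of the distinguished edge $(n,1)$: the multipaths $A$ that contain $(n,1)$; the multipaths $B$ that do not contain it but to which it may be adjoined to yield another multipath; and the multipaths $C$ that do not contain it and to which it cannot be adjoined. The map $\phi\colon A \to B$ deleting the edge $(n,1)$ is a bijection, with inverse ``adjoin $(n,1)$''; since deleting an edge raises the number of components by one, Lemma~\ref{lem:sign} gives $\mu(\phi(x)) = -\mu(x)$, so the contributions of $A$ and $B$ cancel. Hence $\widetilde{\chi}(X(\tR_n)) = -\sum_{x \in C}\mu(x)$, reducing the problem to understanding $C$.

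The heart of the argument is an explicit description and enumeration of $C$. I would first argue that adjoining $(n,1)$ can be forbidden only by the creation of a directed cycle: both degree obstructions are impossible, because in $\tR_n$ the vertex $n$ has the single out-edge $(n,1)$ and the vertex $1$ has the single in-edge $(n,1)$, so neither an out-degree nor an in-degree violation can arise. Thus $c \in C$ exactly when $c$ already contains a directed path from $1$ to $n$; as the direct edge between $1$ and $n$ is the reversed one, such a path must pass through at least one vertex of $[2,n-1]$. I would then set up a bijection between $C$ and the set of pairs consisting of a multipath $g$ on the induced transitive tournament on $[2,n-1]$ together with a choice of one of its components: deleting the vertices $1$ and $n$ sends $c$ to such a pair, and the inverse prepends $1$ and appends $n$ to the chosen component. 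Tracking components shows a pair whose $g$ has $k$ components produces an element of $C$ with exactly $k$ components, and by Theorem~\ref{thm:transitive} there are $S(n-2,k)$ such $g$, hence $k\,S(n-2,k)$ elements of $C$ with $k$ components. Applying Lemma~\ref{lem:sign} with $\mu(c) = (-1)^{n-k}$ and summing over $k$ yields Equation~\eqref{eq:aTT}.

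I expect the main obstacle to be the careful verification of the bijection $C \leftrightarrow \{(g,\text{component})\}$ and the bookkeeping of component counts: one must check that deleting $1$ and $n$ always returns a genuine multipath on $[2,n-1]$ in which the ``middle'' of the $1$-to-$n$ path forms a single component, and that this is inverse to the prepend/append construction. The generating function is then routine: the sequence $a_n = \widetilde{\chi}(X(\tR_{n+2}))$ equals $\sum_{k=1}^{n}(-1)^{n-k-1}k\,S(n,k)$, which is the negative of OEIS sequence A101851 (with EGF $(e^{-x}-1)e^{1-e^{-x}}$), so its exponential generating function is $(1-e^{-x})e^{1-e^{-x}}$.
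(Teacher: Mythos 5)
Your proposal is correct and follows essentially the same route as the paper's own proof: the same three-part partition of $P(\tR_n)$ by the status of the edge $(n,1)$, the same sign-reversing bijection $\phi\colon A\to B$ cancelling those contributions, the same characterisation of $C$ as multipaths containing a directed $1$-to-$n$ path enumerated by $k\,S(n-2,k)$, and the same OEIS A101851 identification for the generating function. The only cosmetic difference is that you phrase the description of $C$ explicitly as a bijection (delete vertices $1$ and $n$ versus prepend/append), which is a fair tightening of the paper's constructive wording but not a different argument.
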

 
 \subsection{Generating function of bipartite digraphs}\label{sec:genfunbip}

Consider the complete bipartite digraph $\tK_{n,m}$, that is the digraph with vertices~$v_1,\dots,v_n$, $w_1,\dots,w_m$, and edges  $\{ (v_i, w_j )\}_{i,j}$. We concisely write~$\widetilde{\chi}_{n,m}$ for $\widetilde{\chi}(X(\tK_{n,m}))$.
Let $\mathcal{F}(x,y)$ be the \emph{mixed} generating function for $\widetilde{\chi}_{n,m}$ defined by the formula
\[\mathcal{F}(x,y)=\sum_{n,m \geq 0} \widetilde{\chi}_{n,m}\frac{y^n \,x^m}{m!}\ ;\]
we show that it admits a simple expression in terms of elementary functions.
The techniques employed here as well, as more general approaches, are extensively described in \cite{genfun}. We will need the following;

\begin{rem}\label{rem:prodser}
Let $a_i$ and $b_i$ be two sequences of integers, and consider their generating functions $A(t)=\sum_{i\geq 0} a_i t^i$ and~$B(z)=\sum_{i\geq 0} b_i \frac{z^i}{i!}$. Then, the coefficient of $t^m$ in the series $A(t)B(z)$ is
$\sum_{i=0}^m \frac{a_i \, b_{m-i}}{(m-i)!} t^i z^{m-i}$.
\end{rem}

Now, we are ready to prove the following theorem. Note that Equation~\eqref{eq:charDnmplain} already appeared in~\cite[Section~2]{MatchingChess}.

\begin{thm}\label{thm:Knm}
The Euler characteristic of $X(K_{n,m})$ is given by the closed formula
\begin{equation}\label{eq:charDnmplain}
\widetilde{\chi}_{n,m}=\sum_{k=0}(-1)^{k+1}\dbinom{m}{k}\dbinom{n}{k}k!\ ,\hskip 25pt \forall n,m>0\ ,
\end{equation}
satisfies the recurrence relation
\begin{equation}\label{eq:charDnm}
 \widetilde{\chi}_{n,m}=\widetilde{\chi}_{n-1,m}-m \widetilde{\chi}_{n-1,m-1}\ ,
\end{equation}
and the mixed generating function for $\widetilde{\chi}_{n,m}$ is
$$\mathcal{F}(x,y)= \frac{e^x}{1-y+xy}\ .$$
\begin{proof}
We begin with the closed formula.
Every multipath of length $k$ in $P(\tK_{n,m})$ is a matching of some elements of $v_1,\dots,v_n$ to some elements of $w_1,\dots,w_m$. So every multipath $m$ of length $k$ can be constructed by first choosing which elements of $w_1,\dots,w_m$ are matched to something, giving $\binom{m}{k}$ choices, and then choosing which elements of $v_1,\dots,v_n$ they are matched to, giving $\frac{n!}{(n-k)!}$ choices. And by Lemma~\ref{lem:sign} we know that $\mu(m)=(-1)^k$. Combining the above, summing over $k$ and negating gives the closed formula for the M\"obius function $\bar{\mu}(P(\tK_{n,m}))$, and thus $\widetilde{\chi}_{n,m}$.

Next we give a recurrence relation for $\widetilde{\chi}_{n,m}$. Partition $P(\tK_{n,m})$ into parts $P_0,\ldots,P_m$, where $P_0$ contains all multipaths that do not have an edge with source $v_1$, and $P_j$ contains all multipaths which contain the edge $(v_1,w_j)$, for all $j>0$. By the definition of the M\"obius function and since we have a partition we know that \begin{equation}\label{eq:mupart}\bar{\mu}(P(\tK_{n,m}))=-\sum_{i=0,\ldots,m}\sum_{p\in P_i}\mu(p)\ .\end{equation}

Since $v_0$ is an isolated vertex in all multipaths of $P_0$, we get that $P_0$ is isomorphic to the poset $P(\tK_{n-1,m})$.  Moreover, each of the $P_j$'s is isomorphic to $P(\tK_{n-1,m-1})$, where the isomorphism $f_j$ is the map which removes the vertices $v_1$ and $w_j$, and the edge $(v_1,w_j)$. So 
\begin{equation}\label{eq:partsum}-\sum_{p\in P_0}\mu(p)=\bar{\mu}(P(\tK_{n-1,m}))\hskip 30pt\text{ and }\hskip 30pt-\sum_{p\in P_j}\mu(p)=-\bar{\mu}(P(\tK_{n-1,m-1}))\ ,\end{equation} where the negation of $\bar{\mu}(P(\tK_{n-1,m-1}))$ is caused by $f_j$ removing an edge hence $\mu(p)=-\mu(f_j(p))$.
Combining \eqref{eq:mupart} and \eqref{eq:partsum}, and replacing $\bar{\mu}$ with the Euler characteristic gives the recurrence relation \eqref{eq:charDnm}.

Finally, we compute the generating function. Consider the generating function for the Euler characteristic for a fixed $m$, that is the function
\[
F_m(y):= \sum_{j\geq 0} \widetilde{\chi}_{j,m}y^j \ .
\]
It follows from the definitions that $\widetilde{\chi}_{0,m}=\widetilde{\chi}_{n,0}=1$, and thus $F_0(y)=\frac{1}{1-y}$.
By multiplying the recursive relation in Equation~\eqref{eq:charDnm} by $y^{n-1}$, summing up over $n>0$, and rearranging the terms, one obtains that  
$(1-y)\, F_m(y)=-m y \;F_{m-1}(y)+1$.
Consequently, it follows: 
\begin{align*}\label{eq:GENcharDnm}F_m(y)&=\frac{-my}{(1-y)}F_{m-1}(y) + \frac{1}{1-y} = \\&= (-1)^m \frac{m!\, y^m}{(1-y)^m}F_0(y)+\sum_{i=0}^{m-1} \frac{ m!}{(m-i)!} \frac{(-1)^i \, y^i}{(1-y)^{i+1}} = \\&=\sum_{i=0}^{m} \frac{ m!}{(m-i)!}\frac{(-1)^i\,y^i}{(1-y)^{i+1}} \ . \end{align*}
We can now find an explicit formula for the exponential generating function of the $F_m(y)$, which means: 
\begin{align*}\mathcal{F}(x,y) &=\sum_{m \geq 0} F_m(y)\frac{x^m}{m!} = %=\\
%&=\sum_{m \geq 0}\left[\sum_{i=0}^{m} \frac{ 1}{(m-i)!}\frac{(-1)^i\,y^i}{(1-y)^{i+1}} \right]x^m \\&=
\frac{1}{(1-y)} \sum_{m \geq 0}\left[\sum_{i=0}^{m} \frac{ 1}{(m-i)!}\frac{(-1)^i\,y^i}{(1-y)^{i}} \right]x^m \ . \end{align*}
In virtue of Remark~\ref{rem:prodser}, taking $b_i=a_i=1$, and setting $t=\frac{-xy}{(1-y)}$, and $z=x$, one obtains
\begin{align*}
\frac{e^x}{1-\frac{-xy}{(1-y)}}=A(t)_{t=\frac{-xy}{(1-y)}}\;B(z)_{z=x} = \sum_{m \geq 0} \left[\sum_{i=0}^{m} \frac{1}{(m-i)!}\frac{(-1)^i\,y^i}{(1-y)^{i}}\right]x^m \ ;
\end{align*}
consequently, we get
\begin{equation*}\label{eq:GenFunBipartite}
\mathcal{F}(x,y) =\frac{1}{(1-y)} \sum_{m \geq 0}\left[\sum_{i=0}^{m} \frac{ 1}{(m-i)!}\frac{(-1)^i\,y^i}{(1-y)^{i}} \right]x^m =\frac{e^x}{1-y+xy} \ , 
\end{equation*}
which provides the desired formula.
\end{proof}
\end{thm}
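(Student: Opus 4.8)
The plan is to extract all three claims from Lemma~\ref{lem:sign} together with the identification $\widetilde{\chi}(X(\tG))=\bar{\mu}(P(\tG))$ recorded in Equation~\eqref{eq:moebiuschar}. The key structural observation is that in $\tK_{n,m}$ every edge runs from a $v$-vertex (a pure source) to a $w$-vertex (a pure sink), so no two edges can be concatenated and every simple path carries a single edge; hence a multipath of length $k$ is exactly a partial matching saturating $k$ of the $w_j$ and $k$ of the $v_i$. To prove the closed formula \eqref{eq:charDnmplain} I would count these directly: choose the $k$ saturated targets among $w_1,\dots,w_m$ in $\binom{m}{k}$ ways, then injectively assign their sources among $v_1,\dots,v_n$ in $n!/(n-k)!=\binom{n}{k}k!$ ways. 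Such a multipath sits on $n+m$ vertices with $k$ edges, hence has $n+m-k$ components, so Lemma~\ref{lem:sign} gives $\mu=(-1)^{k}$; summing over $k$ and negating as in \eqref{eq:moebiuschar} yields $\widetilde{\chi}_{n,m}=\sum_{k}(-1)^{k+1}\binom{m}{k}\binom{n}{k}k!$.

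For the recurrence \eqref{eq:charDnm} I would partition $P(\tK_{n,m})$ according to the behaviour of $v_1$: let $P_0$ be the multipaths in which $v_1$ is isolated, and for $1\le j\le m$ let $P_j$ be those containing the edge $(v_1,w_j)$. Deleting the isolated $v_1$ identifies $P_0$ with $P(\tK_{n-1,m})$; since this drops the vertex count and the component count each by one and leaves the edge count fixed, Lemma~\ref{lem:sign} shows $\mu$ is preserved. Deleting $v_1$, $w_j$ and the edge $(v_1,w_j)$ identifies each $P_j$ with $P(\tK_{n-1,m-1})$, but now one edge is removed, so the sign flips and $\mu(p)=-\mu(f_j(p))$. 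Feeding both identifications into $\bar{\mu}(P(\tK_{n,m}))=-\sum_i\sum_{p\in P_i}\mu(p)$ collapses the $P_0$ sum to $\widetilde{\chi}_{n-1,m}$ and each of the $m$ sums over the $P_j$ to $-\widetilde{\chi}_{n-1,m-1}$, giving \eqref{eq:charDnm}. The one delicate point here is the sign bookkeeping produced by the edge deletion.

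Finally, for the generating function I would first fix $m$ and study $F_m(y)=\sum_{j\ge0}\widetilde{\chi}_{j,m}y^j$, using the boundary values $\widetilde{\chi}_{0,m}=\widetilde{\chi}_{n,0}=1$, so that $F_0(y)=1/(1-y)$. Multiplying \eqref{eq:charDnm} by $y^{n-1}$ and summing over $n>0$ rewrites the recurrence as the one-step relation $(1-y)F_m(y)=-my\,F_{m-1}(y)+1$, which I would solve explicitly to obtain $F_m(y)=\sum_{i=0}^m \frac{m!}{(m-i)!}\frac{(-1)^i y^i}{(1-y)^{i+1}}$. Forming the exponential series $\mathcal{F}(x,y)=\sum_{m\ge0} F_m(y)\,x^m/m! = \frac{1}{1-y}\sum_{m\ge0}\left[\sum_{i=0}^m \frac{1}{(m-i)!}\frac{(-1)^i y^i}{(1-y)^i}\right]x^m$ then exposes a double sum that I would match, via Remark~\ref{rem:prodser} with $a_i=b_i=1$, to the product $A(t)B(z)$ of the ordinary series $A(t)=1/(1-t)$ and the exponential series $B(z)=e^z$ evaluated at $t=-xy/(1-y)$ and $z=x$, namely $e^x/\bigl(1+xy/(1-y)\bigr)$; multiplying by the prefactor $1/(1-y)$ collapses everything to $e^x/(1-y+xy)$. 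I expect the main obstacle to be exactly this final step: solving the $m$-recursion in closed form and, more delicately, reorganising the resulting double sum into the precise shape demanded by Remark~\ref{rem:prodser} under the substitution $t=-xy/(1-y)$. By contrast, the combinatorial first two parts should be routine once the pure source/sink structure of $\tK_{n,m}$ and the sign rule of Lemma~\ref{lem:sign} are in hand.
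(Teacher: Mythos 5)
Your proposal is correct and follows essentially the same route as the paper's proof in all three parts: the same matching count with the sign $(-1)^k$ from Lemma~\ref{lem:sign} for the closed formula, the same partition $P_0,P_1,\dots,P_m$ by the behaviour of $v_1$ (with the sign flip caused by deleting the edge $(v_1,w_j)$) for the recurrence, and the same reduction via $F_m(y)$, the one-step relation $(1-y)F_m(y)=-my\,F_{m-1}(y)+1$, and Remark~\ref{rem:prodser} with $t=-xy/(1-y)$, $z=x$ for the mixed generating function. The only additions are small explicit verifications (e.g.\ that deleting an isolated vertex preserves $\mu$) that the paper leaves implicit.
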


Note that the generating function $\mathcal{F}(x,y)$ is a \emph{mixed} generating function for the Euler characteristic, ordinary with respect to $n$ and exponential with respect to $m$. This implies  that the symmetric role of $n$ and $m$ is not reflected on~$\mathcal{F}(x,y)$.
 We remark that reversing the orientation of all edges does not change the path poset, hence we have the equality $\widetilde{\chi}_{n,m}=\widetilde{\chi}_{m,n}$. As a consequence, the generating function $F_m(y)$ coincides with the generating function
\[ G_{n}(x) = \sum_{i \geq 0} \widetilde{\chi}_{n,i} x^{i} \ ,\]
obtained by considering bipartite complete graphs with a fixed number of \emph{sources}.
The generating function $\mathcal{F}(x,y)$ is in fact a (mixed) generating function of the Euler characteristic of the \emph{chessboard complex}, i.e.~the matching complex of (the  underlying unoriented graph of) $\tK_{n,m}$.

\begin{rem}
The number of multipaths of $\tK_{n,m}$ is given by OEIS sequence A088699 \cite{OEIS}, and has generating function $$\mathcal{F}'(x,y)= \frac{e^x}{1-y-xy}.$$
Note the difference in sign for $xy$, which causes the alternating sum of the multipaths by length, given in Theorem~\ref{thm:Knm}.
\end{rem}

 \section{Dynamical regions and computations }\label{sec:modules}
 
 In this section we introduce a decomposition of directed graphs into subgraphs called dynamical regions. We use minimal decompositions in to dynamical regions to simplify the digraph complexity, hence to compute the homotopy type of multipath complexes. We provide the full computations for the families of linear graphs, polygons and small grids.

  \subsection{Dynamical regions and modules}
 Let $\tG$ be a digraph, and let $\tG'\leq \tG$ be a subgraph. We will use the following terminology. The \emph{complement} $\Comp{\tG'}{\tG}$ of  $\tG'$ in $\tG$ is the subgraph of $\tG$ spanned by the edges in $E(\tG)\setminus E(\tG')$. The \emph{boundary} $\partial_{\tG} \tG'$ of $\tG'$ in $\tG$, or simply $\partial \tG'$ when clear from the context, is defined as $\partial_{\tG} \tG'=V(\tG')\cap V(\Comp{\tG'}{\tG})$, see~Figure~\ref{fig:boundary and complement} for an example. 

\begin{defn}
Let $\tG$ be a connected digraph with at least one edge. A vertex $v\in V(\tG)$ is called \emph{stable} if either the indegree or the outdegree of $v$ is zero, and \emph{unstable} otherwise. 
\end{defn}

The following is the main definition of the section;

\begin{defn}\label{def:dynamicalregion}
Let $\tG$ be a digraph.
A \emph{dynamical region} in $\tG$ is a connected full subgraph $\tR \leq \tG$, with at least one edge, such that: 
\begin{enumerate}[label = (\alph*)]
    \item\label{item:dandelion} all vertices in the boundary of $\tR$ are unstable in $\tG$, but stable in both $\tR$ and $\Comp{\tR}{\tG}$;
    \item\label{item:cicle} the edges of $\tR$ do not belong to any oriented cycle in $\tG$ which is not contained in $\tR$.
\end{enumerate}
A dynamical region is called \emph{stable} (resp.~\emph{unstable}) if all its non-boundary vertices are stable (resp.~unstable, and at least one vertex is unstable).
\end{defn}

\begin{rem}\label{rem:intersectionregions}
The non-empty intersection of two dynamical regions, say $\tR $ and $\tS$, still satisfies \ref{item:dandelion} and \ref{item:cicle}. In particular, each connected component of $\tR \cap \tS$ is still a dynamical region.
\end{rem}

Observe that item \ref{item:dandelion} is equivalent to asking that, for each vertex $v\in \partial\tR$, all edges incident to $v$ belonging $E(\tG)\setminus E(\tR)$ have opposite orientation with respect to the edges in $\tR$ incident to $v$. We will also say that the vertices in the boundary are \emph{coherent dandelions}.

\begin{defn}
A \emph{dynamical module}, shortly a \emph{module}, $\tM$ of a digraph $\tG$ is a minimal dynamical region.
\end{defn}

% \begin{rem}
 For a digraph $\tG$, its associated cone is the digraph $\mathrm{Cone} (\tG)$ with vertices $V(\tG) \cup \{ v_0 \}$ and edges~$E(\tG) \cup \{ (v, v_0) \mid v\in V(\tG)\}$. Coning is a good way to produce modules which are not stable dynamical regions -- cf.~Example~\ref{ex:alternating} -- e.g.~transitive tournaments.
% \end{rem}
%
%We provide some examples.

\begin{example}
A dynamical region which is a dandelion subgraph is never a module, unless it is of type $\tD_{n,0}$ (or $\tD_{0,n}$). In general $\tD_{m,n}$ splits as the union of two dynamical modules: one copy of $D_{m,0}$ and a copy of $D_{0,n}$.
\end{example}

  \begin{figure}[h]
    \centering
    \begin{tikzpicture}[baseline=(current bounding box.center)]
		\tikzstyle{point}=[circle,thick,draw=black,fill=black,inner sep=0pt,minimum width=2pt,minimum height=2pt]
		\tikzstyle{arc}=[shorten >= 8pt,shorten <= 8pt,->, thick]
		
		%\node at (-1,0) {$I_n =$};
		\node[above] (v0) at (0,0) {$v_0$};
		\draw[fill] (0,0)  circle (.05);
		\node[above] (v1) at (1.5,0) {$v_1$};
		\draw[fill] (1.5,0)  circle (.05);
		\node[above] (v2) at (3,0) {$v_2$};
		\draw[fill] (3,0)  circle (.05);
		\node[above] (v4) at (4.5,0) {$v_3$};
		\draw[fill] (4.5,0)  circle (.05);
		\node[above] (v5) at (6,0) {$v_4$};
		\draw[fill] (6,0)  circle (.05);
		\node[above] (v6) at (7.5,0) {$v_5$};
		\draw[fill] (7.5,0)  circle (.05);
		 \node[above]  at (9,0) {$v_{n-1}$};
		 \node (v7) at (9,0) {};
		\draw[fill] (9,0)  circle (.05);
		
		\node   at (8.25,0) {$\dots$};
		 \node (v8) at (10.5,0) {};
		 \node[above]  at (10.5,0) {$v_n$};
		\draw[fill] (10.5,0)  circle (.05);
		
		\draw[thick, bunired, -latex] (0.15,0) -- (1.35,0);
		\draw[thick, bunired, -latex] (2.75,0) -- (1.65,0);
		\draw[thick, bunired, latex-] (4.35,0) -- (3.15,0);
		\draw[thick, bunired, latex-] (4.65,0) -- (5.85,0);
		\draw[thick, bunired, latex-] (7.35,0) -- (6.15,0);
	    \draw[thick, bunired, ] (v7) -- (v8);

	\end{tikzpicture}
	\caption{The alternating graph $\tA_n$ on $n+1$ vertices. The edge between $v_{n -1} $ and $v_{n}$ can be oriented either way depending on the parity of $n$. }
    \label{fig:alternating}
\end{figure}
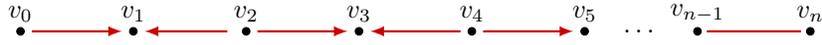

 \begin{example}\label{ex:alternating}
 An alternating graph $\tA_n$ -- cf.~Figure~\ref{fig:alternating} -- is a module. More generally, a stable dynamical region is a module (since each vertex has either outdegree or indegree $0$).
 \end{example}

 \begin{example}
 Consider the digraph $\tG$ in Figure~\ref{fig:boundary and complement}. The subgraph in blue is not a dynamical region of $\tG$, as it is not connected; its leftmost connected component is a module, as it is connected, no edges are contained in any oriented cycles of $\tG$ and the $1$-neighbourhoods of vertices in its boundaries are coherent dandelions. The rightmost connected component instead is not a module, because its only edge is contained in a directed cycle of $\tG$.
  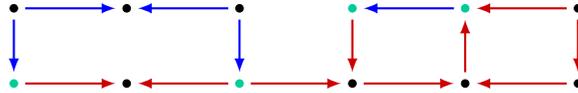
\begin{figure}[h]
     \centering
     \begin{tikzpicture}[scale=1.0][baseline=(current bounding box.center)]
					\tikzstyle{point}=[circle,thick,draw=black,fill=black,inner sep=0pt,minimum width=2pt,minimum height=2pt]
					\tikzstyle{arc}=[shorten >= 8pt,shorten <= 8pt,->, thick]
					\node[above] (v0) at (0,1) {};
					\draw[fill] (0,1)  circle (.05);
					\node[above] (w0) at (4.5,1) {};
					\draw[fill, caribbeangreen] (4.5,1)  circle (.05);
					\node[above] (w1) at (6.0,1) {};
					\draw[fill, caribbeangreen] (6.0,1)  circle (.05);
					\node[above] (w2) at (7.5,1) {};
					\draw[fill] (7.5,1)  circle (.05);
					\node[above] (v1) at (1.5,1) {};
					\draw[fill] (1.5,1)  circle (.05);
					\node[above] (v2) at (3,1) {};
					\draw[fill] (3,1)  circle (.05);
					\draw[thick, blue, -latex] (0.15,1) -- (1.35,1);
					\draw[thick, blue, -latex] (2.85,1) -- (1.65,1);
					%\draw[thick, blue, -latex]  (3.15,1) -- (4.35,1) ;
					\draw[thick, blue, -latex] (5.85,1) -- (4.65,1);
					\draw[thick, bunired, latex-]  (6.15,1) -- (7.35,1) ;
% edges at 0
					\node[above] (v0) at (0,0) {};
					\draw[fill, caribbeangreen] (0,0)  circle (.05);
					\node[above] (w0) at (4.5,0) {};
					\draw[fill] (4.5,0)  circle (.05);
					\node[above] (w1) at (6.0,0) {};
					\draw[fill] (6.0,0)  circle (.05);
					\node[above] (w2) at (7.5,0) {};
					\draw[fill] (7.5,0)  circle (.05);
					\node[above] (v1) at (1.5,0) {};
					\draw[fill] (1.5,0)  circle (.05);
					\node[above] (v2) at (3,0) {};
					\draw[fill, caribbeangreen] (3,0)  circle (.05);
					%\node at (3.75,0) {$\dots$};
					%\node at (3.75,1) {$\dots$};
					\draw[thick, bunired, -latex] (0.15,0) -- (1.35,0);
					\draw[thick, bunired, -latex] (2.85,0) -- (1.65,0);
					\draw[thick, bunired, -latex]  (3.15,0) -- (4.35,0) ;
					\draw[thick, bunired, latex-] (5.85,0) -- (4.65,0);
					\draw[thick, bunired, latex-]  (6.15,0) -- (7.35,0) ;
% vertical edges
					\draw[thick, blue, -latex] (0,0.85) -- (0,0.15);
					%\draw[thick, bunired, -latex] (1.5,0.85) -- (1.5,0.15);
					\draw[thick, blue, -latex] (3,0.85) -- (3,0.15);
					\draw[thick, bunired, -latex] (4.5,0.85) -- (4.5,0.15);
					\draw[thick, bunired, latex-] (6,0.85) -- (6,0.15);
					\draw[thick, bunired, -latex] (7.5,0.85) -- (7.5,0.15);
			\end{tikzpicture}
     \caption{A graph $\tG$, a subgraph $\tH$ (in blue) and its complement (in red). The boundary of $\tH$ in $\tG$ is represented in green.}
     \label{fig:boundary and complement}
 \end{figure}
 \end{example}

The following is straightforward from the definitions:

 \begin{lem}\label{lem:alternatingmultipath}
The multipath complex of a stable dynamical region $\tR$ in $\tG$ is the matching complex of the underlying unoriented graph of $\tR$.
\end{lem}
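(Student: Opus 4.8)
The plan is to unwind the two definitions and exhibit a bijection between simplices on both sides. Recall that a stable dynamical region $\tR$ has the property that every non-boundary vertex is stable, meaning its indegree or outdegree is zero; moreover, by condition~\ref{item:dandelion}, all boundary vertices are also stable \emph{within} $\tR$. Hence \emph{every} vertex of $\tR$ is stable in $\tR$, that is, each vertex has either indegree $0$ or outdegree $0$ in $\tR$. First I would record this observation, since it is the structural fact that drives the whole argument.

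The key step is then to show that, under this hypothesis, no simple path in $\tR$ can have length greater than one. Indeed, suppose a simple path traverses edges $e_1, e_2$ with $t(e_1) = s(e_2) = v$; then $v$ has both indegree and outdegree at least one within the path, hence within $\tR$, contradicting stability of $v$. Therefore every connected path component of a multipath of $\tR$ is a single edge, which means that a multipath of $\tR$ is precisely a collection of pairwise vertex-disjoint edges, i.e.\ a matching of the underlying unoriented graph. This gives a bijection between the faces of $X(\tR)$ (nonempty multipaths, by Definition~\ref{def:pathcomplx}) and the faces of $M(\underline{\tR})$ (matchings, by the definition of the matching complex), where $\underline{\tR}$ denotes the underlying unoriented graph.

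To upgrade this bijection of face sets to an \emph{isomorphism of simplicial complexes}, I would check that it respects the face relation, i.e.\ that it is induced by a bijection of vertex sets of the two complexes. The vertices of both $X(\tR)$ and $M(\underline{\tR})$ are the edges of $\tR$ (respectively of $\underline{\tR}$), and the forgetful map sending a directed edge $(v,w)$ to the unoriented edge $\{v,w\}$ is a bijection because $\tR$ is simple and, by the stability argument, cannot contain both $(v,w)$ and $(w,v)$ (that would force $v$ to have positive indegree and outdegree). A subset of directed edges forms a multipath if and only if it is vertex-disjoint, which holds if and only if the corresponding unoriented edges form a matching; thus the vertex bijection carries simplices to simplices in both directions, yielding the claimed isomorphism $X(\tR)\cong M(\underline{\tR})$.

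I do not expect a serious obstacle here, as the statement is flagged as ``straightforward from the definitions.'' The only point requiring a moment of care is confirming that boundary vertices cause no trouble: one must invoke condition~\ref{item:dandelion} to guarantee they too are stable in $\tR$ (a priori they are unstable in $\tG$), so that the length-one conclusion applies uniformly to all vertices, not merely the interior ones. Everything else is a direct translation between the two monotone properties.
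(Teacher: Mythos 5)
Your proof is correct, and it matches the paper's intent: the paper offers no written argument for this lemma (it is introduced with ``The following is straightforward from the definitions''), and your argument — every vertex of a stable region is stable in $\tR$ (boundary vertices via condition~(a), non-boundary ones by definition), hence no simple path has length greater than one, hence multipaths are exactly matchings — is precisely the intended unwinding. Your extra checks (that $\tR$ cannot contain both $(v,w)$ and $(w,v)$, so the forgetful map on edges is a bijection, and that the correspondence respects the face relation) are exactly the details the paper suppresses, so nothing is missing.
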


For a digraph $\tG$, a decomposition in to dynamical regions allows us to decompose the multipath complexes in to smaller complexes. In fact, we have the following result:

 \begin{prop}\label{prop:regions}
 If  $\tR\leq \tG$ is a dynamical region, and we set $\tS\coloneqq \Comp{\tR}{\tG}$, then we have the homotopy equivalence
 \[
 X(\tG) \simeq X(\tR) \ast X(\tS)
 \]
 between the associated multipath complexes.
\end{prop}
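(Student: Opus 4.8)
The plan is to prove the stronger statement that the natural splitting of edge sets yields an isomorphism of simplicial complexes $X(\tG) \cong X(\tR) \ast X(\tS)$, from which the claimed homotopy equivalence follows at once. Since $\tR$ is a full subgraph, $E(\tR)$ consists of exactly those edges of $\tG$ with both endpoints in $V(\tR)$, so $E(\tG) = E(\tR) \sqcup E(\Comp{\tR}{\tG})$ is a disjoint union. I would therefore consider the assignment sending a multipath $g \in P(\tG)$ to the pair $(g \cap \tR,\, g \cap \Comp{\tR}{\tG})$ obtained by splitting its edge set, together with the candidate inverse sending a pair $(\sigma,\tau)$ of multipaths of $\tR$ and $\tS$ to the union $\sigma \cup \tau$. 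On edge sets these are mutually inverse, so the entire content is to show that both assignments land in the intended spaces. The forward direction is immediate, because being a multipath is a monotone (subgraph-closed) property, so each restriction $g \cap \tR$ and $g \cap \tS$ is again a multipath.

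The core of the argument is the backward direction: given a multipath $\sigma$ of $\tR$ and a multipath $\tau$ of $\tS$, I must verify that $\sigma \cup \tau$ is a multipath of $\tG$. I would first record the elementary characterization that a spanning subgraph $h \leq \tG$ is a multipath if and only if every vertex has indegree and outdegree at most $1$ in $h$ and $h$ contains no oriented cycle (equivalently, every weakly connected component is a simple directed path). It then suffices to check these two conditions for $\sigma \cup \tau$. For the degree condition, a vertex outside $\partial \tR$ is incident in $\tG$ only to edges of $\tR$, or only to edges of $\tS$ (by fullness of $\tR$), so its degrees in $\sigma \cup \tau$ coincide with those in $\sigma$ or in $\tau$ and are $\leq 1$. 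For a boundary vertex $v$, condition~\ref{item:dandelion} together with the coherent-dandelion reformulation forces all $\tR$-edges at $v$ to point one way and all $\tS$-edges at $v$ to point the opposite way; hence the incoming edges of $v$ in $\sigma \cup \tau$ all come from one of $\sigma,\tau$ and the outgoing edges from the other, so both indegree and outdegree of $v$ remain $\leq 1$.

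For acyclicity, I would argue that any oriented cycle $C$ in $\sigma \cup \tau$ must use edges from both $\tR$ and $\tS$, since $\sigma$ and $\tau$ are themselves multipaths and hence individually acyclic. But then $C$ is an oriented cycle of $\tG$ containing an edge of $\tR$ and not contained in $\tR$, contradicting condition~\ref{item:cicle}. This establishes that $\sigma \cup \tau$ is a multipath and completes the bijection. Since the correspondence is the identity on vertices (the edges of $\tG$) and order-preserving in both directions, it identifies the augmented face posets, i.e.\ $P(\tG) \cong P(\tR) \times P(\tS)$, which is precisely the face poset of the join; therefore $X(\tG) \cong X(\tR) \ast X(\tS)$ and in particular $X(\tG) \simeq X(\tR) \ast X(\tS)$. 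I expect the main obstacle to be the boundary bookkeeping in the degree check — correctly exploiting that the two orientations at a coherent-dandelion vertex are opposite so that the in- and out-contributions separate cleanly between $\sigma$ and $\tau$ — and ensuring the cycle argument invokes condition~\ref{item:cicle} with a cycle that genuinely lies in $\tG$.
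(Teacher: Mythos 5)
Your proof is correct and takes essentially the same approach as the paper's: both split a multipath of $\tG$ into its $\tR$- and $\tS$-parts, use condition (a) (the coherent-dandelion property at boundary vertices) to recombine degrees safely and condition (b) to rule out oriented cycles, and then identify $P(\tG)$ with the path poset of $\tR\sqcup\tS$, concluding via the join. The only (harmless) difference is that you record the stronger conclusion that this correspondence is a simplicial isomorphism $X(\tG)\cong X(\tR)\ast X(\tS)$, whereas the paper phrases the last step as the homotopy equivalence $X(\tR\sqcup\tS)\simeq X(\tR)\ast X(\tS)$.
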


 \begin{proof}
 Observe that, if $\tR\leq \tG$ is a dynamical region, then the vertices in the boundary of $\tR$ are coherent dandelions. Let~$\tH$ be a multipath of $\tG$; then $\tH\cap \tR$ and $\tH \cap \tS$ are multipaths in $\tR$ and $\tS$, respectively. 
 Vice versa, if $\tH$ and $\tH'$ are multipaths of $\tR$ and $\tS$, respectively, then $\tH\cup \tH'$ is a multipath of $\tG$ as no edges of $\tR$ are contained in any oriented cycle of $\tG$ and the edges in the boundary compose. As a consequence, the path poset of $\tG$ is isomorphic to the path poset of the disjoint union of $\tR$ and $ \tS$. 
 
 The multipath complex of $\tG$ can now be identified with the multipath complex of the disjoint union $\tR\sqcup \tS$.
  To conclude, observe that the multipath complex of the disjoint union of two directed graphs is homotopic to the join of the multipath complexes -- compare~\cite[Definition 2.16]{Kozlov} and \cite[Remark~3.2]{secondo}.
 \end{proof}

\begin{lem}
For each edge $e\in E(\tG)$ there exists a unique dynamical module of $\tG$ containing it. 
\end{lem}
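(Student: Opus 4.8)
We must show that the dynamical modules of $\tG$ partition its edge set: every edge lies in some module, and in exactly one.

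**The plan.** I would prove this in two halves, existence and uniqueness, using Remark~\ref{rem:intersectionregions} as the central tool. For existence, I would first observe that each edge $e$ lies in \emph{some} dynamical region: indeed $\tG$ itself, or at least the connected component of $\tG$ containing $e$, is a candidate, and if that component has at least one edge and its boundary is empty (it has no ambient graph to be coherent with), it trivially satisfies conditions~\ref{item:dandelion} and~\ref{item:cicle}. More carefully, one wants a \emph{full} connected subgraph with at least one edge containing $e$ and satisfying the region axioms; the whole connected component works. Since dynamical modules are defined as \emph{minimal} dynamical regions, and $\tG$ has finitely many edges, any region containing $e$ contains a minimal one: take a region $\tR$ containing $e$ of smallest possible edge-count among all regions containing $e$; this $\tR$ is a module by definition, so $e$ lies in a module.

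**Uniqueness via the intersection property.** This is the crux, and Remark~\ref{rem:intersectionregions} does the heavy lifting. Suppose $e$ lies in two distinct modules $\tM$ and $\tM'$. Their intersection $\tM \cap \tM'$ is nonempty (it contains $e$, hence the endpoints and the edge $e$ itself). By the remark, each connected component of $\tM\cap\tM'$ again satisfies conditions~\ref{item:dandelion} and~\ref{item:cicle}, so the component containing $e$ is itself a dynamical region, and it contains $e$. But a module is a \emph{minimal} region; since $\tM$ is minimal and the component of $\tM\cap\tM'$ containing $e$ is a region contained in $\tM$ (because $\tM\cap\tM' \leq \tM$), minimality forces this component to equal $\tM$. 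Symmetrically it must equal $\tM'$. Hence $\tM = \tM'$, contradicting distinctness. Therefore the module containing $e$ is unique.

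**The main obstacle.** The genuinely delicate point is checking that $\tM\cap\tM'$ (or the relevant connected component) is really a legitimate region \emph{containing the full edge $e$}, and in particular that a module, being minimal and connected, cannot properly contain a strictly smaller region on the same component through $e$. I would need to confirm that "full subgraph'' is preserved under intersection and that minimality is measured in a way (say, by inclusion of edge sets within a fixed connected context) that makes the component of $\tM\cap\tM'$ comparable to $\tM$. A subtlety is that $\tM\cap\tM'$ need not be connected, so one must pass to the connected component containing $e$ before invoking minimality; the remark is phrased exactly to permit this. Once that is secured, the two-line minimality argument closes the proof.
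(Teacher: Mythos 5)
Your uniqueness half is correct, and it is essentially the paper's own argument: the paper also makes Remark~\ref{rem:intersectionregions} carry all the weight, except that it intersects \emph{all} dynamical regions containing $e$ at once (so that the resulting region is contained in every region through $e$, hence in any module through $e$), whereas you intersect two competing modules pairwise; in both cases minimality of a module then forces the modules to coincide, and this part of your write-up is sound.

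The genuine gap is in your existence step, in the sentence ``this $\tR$ is a module by definition.'' A dynamical module is a region that is minimal among \emph{all} dynamical regions of $\tG$, not merely among those containing $e$; your $\tR$, chosen of smallest edge-count among the regions through $e$, could a priori properly contain a dynamical region that \emph{avoids} $e$, in which case it would not be a module at all, and then nothing you have written produces any module containing $e$. Your ``main obstacle'' paragraph worries about fullness under intersections and about comparability --- the benign points --- but not about this mismatch between the two notions of minimality, which is where the substance of the lemma actually sits. The way to close it is the closure mechanism the paper records immediately after the lemma: conditions \ref{item:dandelion} and \ref{item:cicle} of Definition~\ref{def:dynamicalregion} force any region containing an edge $e'$ to contain every edge of $\tG$ sharing the source or the target of $e'$ (otherwise the common vertex would be a boundary vertex that is stable on both sides, hence stable in $\tG$, contradicting \ref{item:dandelion}) and to contain every oriented cycle of $\tG$ through $e'$. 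Since these generating relations are symmetric, iterating them shows that any region contained in your minimal $\tR$ must contain $e$ itself, hence equals $\tR$; only after this argument is $\tR$ a module. To be fair, the published proof is nearly as terse on this point --- it takes the intersection of all regions through $e$ and declares it ``unique by construction'' --- but that construction needs no extra input for the uniqueness assertion, whereas your existence claim is false as literally stated and needs the closure argument to be repaired.
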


\begin{proof}
The statement follows from Remark~\ref{rem:intersectionregions};  taking the intersection of all the dynamical regions in $\tG$ containing the edge~$e$. This satisfies \ref{item:dandelion} and \ref{item:cicle}
in Definition~\ref{def:dynamicalregion}, and it is connected.
It is also unique by construction, which concludes the proof. 
\end{proof}

Observe that the construction of the (unique) dynamical module containing a subset~$S$ of edges of $\tG$ can be performed iteratively. In fact, this is achieved by repetitively applying the following steps:
\begin{enumerate}
    \item for each edge $e$ in $S$, add to $S$ all the edges $ e'$ of $\tG$ with target $t(e')=t(e)$ or source $s(e')=s(e)$;
    \item for each edge $e$ in $S$ contained in a coherent cycle $\Gamma$ of $\tG$, add to $S$ all the edges $ e''$ with $e''\in \Gamma$.
\end{enumerate}
As a corollary, we get:
  
 \begin{thm}\label{thm:module} 
We have a unique (up to re-ordering) decomposition of $\tG$ in to dynamical modules $\tM_1,\dots,\tM_{k}$, and 
 \[X(\tG) \simeq X(\tM_1) \ast \cdots \ast X(\tM_k) \ .\]
 Furthermore, this decomposition can be found algorithmically.
 \end{thm}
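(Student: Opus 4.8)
The plan is to prove Theorem~\ref{thm:module} by combining the preceding lemma (uniqueness of the module containing a given edge) with the inductive application of Proposition~\ref{prop:regions}. First I would establish the decomposition as a \emph{set-theoretic partition of the edges}: by the lemma just proved, each edge $e\in E(\tG)$ lies in a unique dynamical module $\tM(e)$. I would then argue that two such modules are either equal or edge-disjoint. Indeed, if $\tM(e)\cap \tM(e')$ contains an edge $f$, then by Remark~\ref{rem:intersectionregions} each connected component of the intersection is again a dynamical region containing $f$; minimality of $\tM(f)=\tM(e)=\tM(e')$ (again via the uniqueness lemma) would force $\tM(e)=\tM(e')$. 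Collecting the distinct modules $\tM_1,\dots,\tM_k$ therefore gives a partition $E(\tG)=E(\tM_1)\sqcup\cdots\sqcup E(\tM_k)$, and this partition is unique up to re-ordering precisely because each block is forced by the edge it contains.

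Next I would deduce the homotopy equivalence by iterating Proposition~\ref{prop:regions}. The key observation is that a single module $\tM_1$ is in particular a dynamical region, so setting $\tS_1\coloneqq \Comp{\tM_1}{\tG}$ we get $X(\tG)\simeq X(\tM_1)\ast X(\tS_1)$. The complement $\tS_1$ is the subgraph spanned by $E(\tG)\setminus E(\tM_1)=E(\tM_2)\sqcup\cdots\sqcup E(\tM_k)$, and I would check that the modules $\tM_2,\dots,\tM_k$ remain dynamical modules \emph{inside} $\tS_1$: the defining conditions \ref{item:dandelion} and \ref{item:cicle} are phrased in terms of boundary vertices being coherent dandelions and edges avoiding external cycles, and removing the edge-disjoint block $\tM_1$ cannot create new incident edges of opposite orientation at a boundary vertex nor introduce new oriented cycles through the remaining edges. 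Hence $\tM_2$ is a dynamical region of $\tS_1$, and applying Proposition~\ref{prop:regions} again yields $X(\tS_1)\simeq X(\tM_2)\ast X(\Comp{\tM_2}{\tS_1})$. Since the join is associative up to homotopy, induction on $k$ gives
\[
X(\tG)\simeq X(\tM_1)\ast X(\tM_2)\ast\cdots\ast X(\tM_k)\ ,
\]
as claimed.

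Finally, the algorithmic statement follows directly from the iterative construction already spelled out in the excerpt: starting from any single edge $e$, one alternately applies step~(1) (closing up $S$ under sharing a common source or target, which enforces the coherent-dandelion condition at boundary vertices) and step~(2) (closing up under coherent cycles, which enforces condition~\ref{item:cicle}), until $S$ stabilises; the resulting full subgraph on the touched vertices is $\tM(e)$. Repeating this on an unused edge of the complement produces the next module, and the process terminates because $E(\tG)$ is finite and each module consumes at least one edge. I would note that termination and correctness are immediate once one verifies that the stabilised set $S$ satisfies both axioms of a dynamical region while being minimal.

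The main obstacle I anticipate is the verification that the modules $\tM_2,\dots,\tM_k$ genuinely remain dynamical modules of the complement $\tS_1$, rather than merely dynamical regions --- that is, ensuring the minimality is preserved when passing to the subgraph, and that deleting $E(\tM_1)$ does not secretly destroy condition~\ref{item:dandelion} by changing the stability type of a shared boundary vertex. Carefully tracking the indegree/outdegree of boundary vertices under edge deletion, and confirming that a vertex shared between $\tM_1$ and some $\tM_j$ is a coherent dandelion whose $\tM_1$-edges and $\tM_j$-edges have opposite orientation, is the delicate bookkeeping step on which the clean inductive application of Proposition~\ref{prop:regions} rests.
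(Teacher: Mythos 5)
Your proposal is correct and follows essentially the same route as the paper's (four-line) proof: each edge lies in a unique dynamical module by the preceding lemma, so the modules partition $E(\tG)$ and the decomposition is forced; the homotopy equivalence then follows by iterating Proposition~\ref{prop:regions} on successive complements, and the algorithm is the iterative closure procedure described just before the theorem. You are in fact more explicit than the paper on the one delicate point --- that each remaining module $\tM_j$ is still a dynamical region of $\Comp{\tM_1}{\tG}$ (in particular that a shared boundary vertex either drops out of the boundary or stays unstable), so that Proposition~\ref{prop:regions} can legitimately be applied again --- a verification the paper subsumes under ``proceed iteratively.''
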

 
 \begin{proof}
 Fix an edge $e$ of $\tG$. This is contained in a unique module $\tM_{e}$, and $X(\tG) \simeq X(\tM_e) \ast X(\Comp{\tM_{e}}{\tG})$ by Proposition~\ref{prop:regions}. Now, we can proceed iteratively, by considering $\Comp{\tM_{e}}{\tG}$ \emph{en lieu} of $\tG$.
 This provides the desired decomposition, and since this decomposition is given by the unique modules containing each edge in $\tG$, uniqueness follows. 
 \end{proof}
 
In particular, we have that if one of the modules in the decomposition of $\tG$ has contractible multipath complex, then~$X(\tG)$ is contractible (and hence has trivial reduced cohomology).

\subsection{Multipath complexes of  polygonal graphs}

In this section we apply Theorem~\ref{thm:module} to the computation of the homotopy type of multipath complexes of linear and polygonal graphs; here, by polygonal graph, we mean any oriented (i.e.~no bi-directional edges) graph whose underlying undirected graph is a cycle. We first need a definition.

 \begin{defn}
  The \emph{size} of a dynamical region is the number of its non-boundary vertices.
 \end{defn}

\begin{lem}
Let $\tP$ be a polygonal graph with at least a stable vertex. If $\tP$ has an unstable region of size at least two, then $X(\tP)$ is contractible.
\end{lem}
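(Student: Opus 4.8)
The plan is to exhibit the given unstable region as a coherently oriented linear graph, whose multipath complex is a simplex, and then to invoke Proposition~\ref{prop:regions} together with the fact that a join with a contractible space is contractible.

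First I would show that the unstable region $\tR$ is isomorphic to a coherently oriented linear graph $\tI_k$. Since every non-boundary vertex of an unstable region is unstable, and every boundary vertex of a dynamical region is unstable in $\tP$ by condition~\ref{item:dandelion}, no vertex of $\tR$ is stable in $\tP$. The hypothesis that $\tP$ has at least one stable vertex therefore guarantees $\tR \neq \tP$. As $\tR$ is a connected full subgraph of a cycle, it is an arc, and being proper it is a path $v_0, v_1, \dots, v_k$; the condition of size at least two gives $k \geq 3$. Each internal vertex $v_i$ with $0 < i < k$ is unstable, hence has indegree and outdegree one in $\tP$, and both its incident edges lie in the full subgraph $\tR$; thus $v_i$ is a through-vertex of $\tR$. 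A path all of whose internal vertices are through-vertices is coherently oriented, so $\tR \cong \tI_k$.

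Next I would recall that the path poset of $\tI_k$ is the Boolean lattice $\mathbb{B}(k)$, so that $X(\tI_k)$ is a $(k-1)$-dimensional simplex and in particular contractible. Finally, setting $\tS \coloneqq \Comp{\tR}{\tP}$, Proposition~\ref{prop:regions} yields the homotopy equivalence $X(\tP) \simeq X(\tR) \ast X(\tS)$. Since the join of a contractible space with any simplicial complex is contractible (the same principle recorded after Theorem~\ref{thm:module}, as $\mathrm{pt} \ast X(\tS)$ is a cone), and $X(\tR)$ is contractible, it follows that $X(\tP)$ is contractible.

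I expect the main obstacle to be the careful verification in the first step that $\tR$ really is a coherent sub-arc rather than some more complicated subgraph: one must use both that $\tR$ is a full subgraph (so that a through-vertex of $\tP$ lying in $\tR$ retains both of its edges, forcing coherence) and the presence of a stable vertex of $\tP$ to prevent $\tR$ from closing up into the entire cycle. The latter is exactly the point at which the coherently oriented polygon $\tP_n$, whose multipath complex is a sphere rather than contractible, is excluded; this explains the necessity of the hypothesis that $\tP$ admits a stable vertex.
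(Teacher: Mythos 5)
Your proof is correct, and it runs on the same engine as the paper's -- Proposition~\ref{prop:regions} applied to a coherently oriented dynamical region, plus the fact that a join with a contractible complex is contractible -- but the region you split off is different, and so is the role played by the stable-vertex hypothesis. The paper takes a single edge joining two non-boundary vertices of the unstable region and checks that this edge is itself a dynamical module: condition~\ref{item:dandelion} of Definition~\ref{def:dynamicalregion} holds because both endpoints are unstable in $\tP$, and condition~\ref{item:cicle} holds because a polygonal graph with a stable vertex is not coherently oriented and hence has no oriented cycles at all; since the multipath complex of a single edge is a point, $X(\tP)$ is a cone. You instead apply Proposition~\ref{prop:regions} to the unstable region $\tR$ itself, which requires no verification (it is a dynamical region by hypothesis), and your effort goes into the structural identification $\tR\cong\tI_k$: every vertex of $\tR$ is unstable in $\tP$, so the stable vertex lies outside $\tR$, forcing $\tR$ to be a proper induced arc whose internal vertices are through-vertices, hence a coherent path, whose complex is a simplex. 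So the paper uses the stable vertex to kill condition~\ref{item:cicle} for its new one-edge region, while you use it to prevent $\tR$ from closing up into the whole polygon; both correctly isolate why the coherently oriented polygon, whose complex is a sphere, is excluded. Your route gives a small structural bonus (unstable regions of such polygons are coherent arcs) and avoids certifying a new subgraph as a module; the paper's is shorter and more local. Incidentally, both arguments need less than the stated hypothesis: an unstable region of any size is a coherent arc $\tI_k$ with $k\ge 1$, so the same conclusion already follows from the existence of any unstable region.
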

\begin{proof}
The presence of an unstable region $S$ with at least two non-boundary vertices implies, since $\tP$ is not coherently oriented, that we can take as a module any edge between two non-boundary vertices in $S$. This implies that $X(\tP)$ is homotopy equivalent to a cone, hence contractible.
\end{proof}

\begin{prop}
Let $\tP$ be a polygonal graph with no unstable vertices. Then, the number $n$ of vertices in $\tP$ is even, and 
\[ X(\tP) \simeq \begin{cases} S^{k-1} \vee S^{k-1} & \text{if } n = 3k\ ,\\  S^{k-1} & \text{if }n = 3k + 1\ ,\\ S^{k} & \text{if }n = 3k + 2 \ .\\\end{cases} \]
In particular, the associated multipath complex is always homotopy equivalent to a wedge of spheres.
\end{prop}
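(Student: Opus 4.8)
The plan is to reduce the statement to a classical matching-complex computation and then carry out that computation by a cofiber (equivalently, discrete Morse) recursion. First I would settle the parity claim together with the reduction. Since $\tP$ is an orientation of a cycle, every vertex has exactly two incident edges, so having no unstable vertices means every vertex is either a \emph{source} (both edges outgoing) or a \emph{sink} (both edges incoming). If $v$ is a source, then the edge joining it to a neighbour $v'$ points into $v'$, so $v'$ cannot be a source and must be a sink; running around the cycle, sources and sinks therefore alternate, which forces the number $n$ of vertices to be even. Moreover, with no unstable vertices the whole graph $\tP$ is a \emph{stable} dynamical region (its boundary is empty and every vertex is stable), so Lemma~\ref{lem:alternatingmultipath} gives an isomorphism $X(\tP)\cong M(C_n)$, where $C_n$ is the underlying undirected $n$-cycle. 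It then remains to compute the homotopy type of the matching complex $M(C_n)$.

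For the computation I would use the standard star/deletion decomposition of a simplicial complex at a vertex. Viewing an edge $e=uv$ of a graph $G$ as a vertex of $M(G)$, one has $\mathrm{del}_{M(G)}(e)=M(G-e)$ and $\mathrm{lk}_{M(G)}(e)=M(G-u-v)$, while $\mathrm{star}_{M(G)}(e)$ is a cone, hence contractible. This yields a homotopy equivalence
\[
M(G)\;\simeq\;\mathrm{Cofiber}\bigl(M(G-u-v)\hookrightarrow M(G-e)\bigr).
\]
Applying this with $G=C_n$ and $e$ any edge gives $G-e=L_n$ (the path on $n$ vertices) and $G-u-v=L_{n-2}$, so
\[
M(C_n)\;\simeq\;\mathrm{Cofiber}\bigl(M(L_{n-2})\hookrightarrow M(L_n)\bigr).
\]
The same recursion applied to a leaf edge of a path, namely $M(L_n)\simeq\mathrm{Cofiber}(M(L_{n-2})\hookrightarrow M(L_{n-1}))$, computes the path matching complexes by induction: $M(L_n)$ is contractible when $n\equiv 2\pmod 3$, is $S^{j-1}$ when $n=3j$, and is $S^{k-1}$ when $n=3k+1$.

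Finally I would feed these into the cofiber formula and split into the three residue classes of $n$ modulo $3$. When $n=3k+1$ the subcomplex $M(L_{n-2})=M(L_{3k-1})$ is contractible, so collapsing it gives $M(C_n)\simeq M(L_n)\simeq S^{k-1}$; when $n=3k+2$ the total space $M(L_n)$ is contractible, so the cofiber is the suspension $\Sigma M(L_{n-2})\simeq\Sigma S^{k-1}=S^{k}$; and when $n=3k$ the inclusion realises a map $S^{k-2}\hookrightarrow S^{k-1}$ of spheres, which is null-homotopic for dimension reasons, whence $M(C_n)\simeq M(L_n)\vee\Sigma M(L_{n-2})\simeq S^{k-1}\vee S^{k-1}$. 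This reproduces the three cases of the statement.

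The main obstacle is controlling the connecting maps $M(L_{n-2})\hookrightarrow M(L_n)$ (and their analogues in the path induction), since a cofiber depends on the homotopy class of the map and not merely on source and target. In the delicate case $n=3k$ this is handled by the elementary observation that a map from a lower-dimensional sphere to a higher-dimensional one is null-homotopic, so that $X/A\simeq X\vee\Sigma A$; note that for even $n=3k$ one has $k\geq 2$, so $\pi_{k-2}(S^{k-1})=0$ and no edge case arises. In the remaining two cases one of the two spaces is contractible, so the cofiber is forced to be either $M(L_n)$ up to homotopy or a suspension of $M(L_{n-2})$, and the map plays no role. Alternatively, the homotopy type of $M(C_n)$ can simply be quoted from the literature on matching complexes of paths and cycles, in which case only the reduction in the first paragraph needs to be supplied.
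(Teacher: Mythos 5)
Your first paragraph is exactly the paper's reduction: sources and sinks must alternate, so $n$ is even, $\tP$ is a stable dynamical region, and Lemma~\ref{lem:alternatingmultipath} identifies $X(\tP)$ with the matching complex of the underlying $n$-cycle. Where you diverge is the second step: the paper simply quotes Kozlov's computation of the matching complex of the $n$-cycle (\cite[Proposition~5.2]{Kozlov99}), whereas you rederive it from the path case via the star/link/deletion cofiber sequence. Your cycle-level argument is correct as it stands: in each residue class either one side of the cofiber sequence $M(L_{n-2})\hookrightarrow M(L_n)$ is contractible, or (for $n=3k$) the inclusion is a map $S^{k-2}\hookrightarrow S^{k-1}$, hence null-homotopic, giving $M(L_n)\vee\Sigma M(L_{n-2})$; and you correctly note that $n$ even forces $k\geq 2$ there.

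The genuine gap is one level down, in the path induction you rely on. In the leaf-edge recursion $M(L_n)\simeq\mathrm{Cofiber}\bigl(M(L_{n-2})\hookrightarrow M(L_{n-1})\bigr)$, the case $n\equiv 2\pmod 3$ produces an inclusion $M(L_{3k})\hookrightarrow M(L_{3k+1})$ of a $(k-1)$-sphere into a $(k-1)$-sphere, and contractibility of the cofiber is \emph{not} forced by the homotopy types of source and target: the cofiber of a degree-$d$ self-map of $S^{k-1}$ has reduced homology $\bZ/d$ in degree $k-1$, so you need the inclusion to have degree $\pm 1$, which ``dimension reasons'' do not supply. This case cannot be discarded, because the contractibility of $M(L_m)$ for $m\equiv 2\pmod 3$ is precisely the input you feed into the cycle computation when $n=3k+1$ and $n=3k+2$. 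The repair is easy, in either of two ways. Either quote the path computation (\cite[Proposition~4.6]{Kozlov99}, which the paper itself invokes in the proof of Proposition~\ref{lem:polygonalgraphs}); or run your recursion at the \emph{second} edge $e_2$ of the path rather than the leaf edge: deleting $e_2$ leaves the disjoint union of a single edge and a path, so $\mathrm{del}(e_2)=M(L_2)\ast M(L_{n-2})$ is a cone, hence contractible, and since $\mathrm{star}(e_2)$ is also contractible, Lemma~\ref{lem:BjornerLemma} (with two pieces) gives $M(L_n)\simeq\Sigma\,\mathrm{lk}(e_2)=\Sigma M(L_{n-3})$. Together with the base cases $M(L_1)=\emptyset$, $M(L_2)\simeq\ast$, $M(L_3)\simeq S^0$ and the convention $\Sigma\emptyset=S^0$, this suspension recursion yields all three residue classes with no degree issues, after which your cycle argument goes through verbatim.
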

\begin{proof}
If there are no unstable vertices, then the orientation on $\tP$ is alternating, which implies that the number of vertices is even.
Therefore, the multipath complex coincides with the matching complex, see~Lemma~\ref{lem:alternatingmultipath}.
Kozlov in \cite[Proposition~5.2]{Kozlov99} computes the matching complex of the cycle~$\mathscr{C}_n$ with $n$ vertices, borrowing his notation. This turns out to be either a sphere or the wedge of two spheres, whose dimension depends only on the number of vertices modulo~$3$. The assertion now follows directly from Kozlov's statement.
\end{proof}

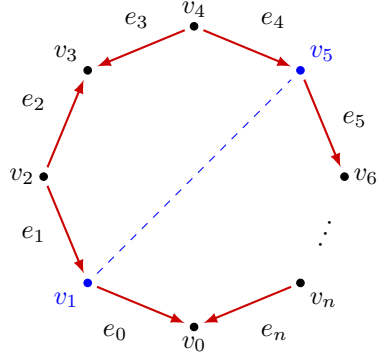
\begin{figure}[h]
\centering
\newdimen\R
\R=2.0cm
\begin{tikzpicture}
%\draw[xshift=5.0\R,latex-] (0:\R) \foreach \x in {45,90,...,359} {  -- (\x:\R)};
\draw[xshift=5.0\R, fill] (270:\R) circle(.05)  node[below] {$v_0$};
\draw[xshift=5.0\R,fill, blue] (225:\R) circle(.05)  node[below left]   {$v_1$};
\draw[xshift=5.0\R,fill] (180:\R) circle(.05)  node[left] {$v_2$};
\draw[xshift=5.0\R,fill] (135:\R) circle(.05)  node[above left] {$v_3$};
\draw[xshift=5.0\R, fill] (90:\R) circle(.05)  node[above] {$v_4$};
\draw[xshift=5.0\R,fill, blue] (45:\R) circle(.05)  node[above right] {$v_5$};
\draw[xshift=5.0\R,fill] (0:\R) circle(.05)  node[right] {$v_6$};
\draw[xshift=5.0\R,fill] (315:\R) circle(.05)  node[below right] {$v_{n}$};

\node[xshift=5.0\R] (v0) at (270:\R) { };
\node[xshift=5.0\R] (v1) at (225:\R) { };
\node[xshift=5.0\R] (v2) at (180:\R) { };
\node[xshift=5.0\R] (v3) at (135:\R) { };
\node[xshift=5.0\R] (v4) at (90:\R) { };
\node[xshift=5.0\R] (v5) at (45:\R) { };
\node[xshift=5.0\R] (v6) at (0:\R) { };
\node[xshift=5.0\R] (vn) at (315:\R) { };

\draw[thick, bunired, latex-] (v0)--(v1);
\draw[thick, bunired, latex-] (v1)--(v2);
\draw[thick, bunired, -latex] (v2)--(v3);
\draw[thick, bunired, latex-] (v3)--(v4);
\draw[thick, bunired, -latex] (v4)--(v5);
\draw[thick, bunired, -latex] (v5)--(v6);
\draw[thick, bunired, -latex] (vn)--(v0);

\draw[dashed, blue] (v1)--(v5);

\draw[xshift=5.0\R, fill] (292.5:\R) node[below right] {$e_{n}$};
\draw[xshift=5.0\R,fill] (247.5:\R) node[below left] {$e_0$};
\draw[xshift=5.0\R,fill] (202.5:\R)   node[left] {$e_1$};
\draw[xshift=5.0\R,fill] (157.5:\R)  node[above left] {$e_2$};
\draw[xshift=5.0\R, fill] (112.5:\R)   node[above] {$e_3$};
\draw[xshift=5.0\R,fill] (67.5:\R) node[above right] {$e_4$};
\draw[xshift=5.0\R,fill] (22.5:\R) node[right] {$e_5$};
\draw[xshift=4.95\R,fill] (337.5:\R)  node {$\cdot$} ;
\draw[xshift=4.95\R,fill] (333:\R)  node {$\cdot$} ;
\draw[xshift=4.95\R,fill] (342:\R)  node {$\cdot$} ;
\end{tikzpicture}
\caption{A polygonal graph on $n$ edges with (at least) two vertices that are neither sources or sinks (in blue). The dashed line shows the separation between the two modules.} 
\label{fig:polydash}
\end{figure}

By the previous results, we might assume that the considered polygonal graph~$\tP$ has unstable regions of size at most one, and at least an unstable region. The unstable vertices can be used to split~$\tP$ into modules which are alternating linear graphs -- cf.~Figure~\ref{fig:polydash}. More precisely, we have the following result;

\begin{prop}\label{lem:polygonalgraphs}
Let \tP~be a polygonal graph with at least one stable vertex, and no unstable regions of size greater than one. Denote by $\ell_1,...,\ell_k$ the size of the stable regions, then
\[ X(\tP) \simeq X(\tA_{\ell_1 +2}) \ast \cdots \ast X(\tA_{\ell_k +2}) \ . \]
In particular, $X(\tP)$ is contractible if, for some $i$, $\ell_i = 3i - 1 $  and
\[ X(\tP) \simeq S^{\left\lceil \frac{\ell_1 - 1}{3} \right\rceil} \ast \cdots \ast  S^{\left\lceil \frac{\ell_k - 1}{3} \right\rceil} \ ,  \]
otherwise.\end{prop}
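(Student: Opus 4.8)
The plan is to combine the module decomposition of Theorem~\ref{thm:module} with the known homotopy type of the matching complex of a path; throughout I may assume that $\tP$ has at least one unstable vertex, the complementary case of a fully alternating polygon being covered by the preceding proposition. \emph{Identifying the modules.} Under the standing hypotheses the unstable vertices of $\tP$ are isolated through-vertices, and I claim that the maximal alternating arcs delimited by consecutive unstable vertices are exactly the dynamical modules $\tM_1,\dots,\tM_k$. To see that such an arc $\tR$ is a dynamical region I would verify the two conditions of Definition~\ref{def:dynamicalregion}: condition~\ref{item:cicle} is vacuous, since a polygon carrying a stable vertex is not coherently oriented and hence has no oriented cycle; condition~\ref{item:dandelion} reduces to checking that each endpoint $u$ of $\tR$, being a through-vertex, is a coherent dandelion, i.e.~the unique edge of $\tR$ at $u$ and the unique edge of $\Comp{\tR}{\tG}$ at $u$ induce opposite orientations at $u$, which is precisely what ``through-vertex'' means. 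Each $\tR$ is then a \emph{stable} dynamical region, its interior vertices being alternately sources and sinks, hence a module by Example~\ref{ex:alternating}. Minimality follows because cutting $\tR$ at an interior source or sink $w$ would fail condition~\ref{item:dandelion}: the two edges of $\tR$ at $w$ point the same way, so $w$ cannot be a coherent dandelion. Thus the arcs are exactly the modules, each isomorphic to an alternating linear graph $\tA_{\ell_i+2}$ under the size convention, and Theorem~\ref{thm:module} gives $X(\tP)\simeq X(\tA_{\ell_1+2})\ast\cdots\ast X(\tA_{\ell_k+2})$.

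\emph{Homotopy type of a single factor.} Since each module is a stable dynamical region, Lemma~\ref{lem:alternatingmultipath} identifies $X(\tA_{\ell_i+2})$ with the matching complex of the underlying undirected path, which has $\ell_i+3$ vertices. Kozlov's computation of the matching complex of a path (the path analogue of the cycle result $\cite[Proposition~5.2]{Kozlov99}$ invoked just above) shows that this complex is contractible exactly when its number of vertices is $\equiv 2 \pmod{3}$, and is a sphere otherwise, with dimension growing linearly in the number of vertices. Substituting $\ell_i+3$ and reducing the residues then yields that $X(\tA_{\ell_i+2})$ is contractible precisely when $\ell_i\equiv 2\pmod{3}$, and equals $S^{\lceil(\ell_i-1)/3\rceil}$ otherwise.

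\emph{Assembling the join.} If some $\ell_i\equiv 2\pmod{3}$, the corresponding factor is contractible; since $\{\mathrm{pt}\}\ast Y=\mathrm{Cone}(Y)$ is contractible and the join is homotopy invariant, the whole join collapses, which gives the contractible case. Otherwise every factor is a sphere, and iterating $S^a\ast S^b\simeq S^{a+b+1}$ produces the asserted join of spheres (equivalently a single sphere of dimension $\sum_i\lceil(\ell_i-1)/3\rceil + k-1$). I expect the only genuine obstacle to be the first step, namely establishing that the alternating arcs are exactly the minimal dynamical regions: the coherent-dandelion bookkeeping at the unstable endpoints together with the minimality argument. Once this is in place the topology is routine, the one remaining subtlety being the careful translation of Kozlov's three residue cases into the stated conditions on the $\ell_i$.
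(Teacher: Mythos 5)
Your strategy is the same as the paper's: cut $\tP$ at its unstable vertices, show that the resulting alternating arcs are exactly the dynamical modules, apply Theorem~\ref{thm:module} to obtain the join decomposition, and then compute each factor via Lemma~\ref{lem:alternatingmultipath} and Kozlov's matching complexes of paths. Your first step is in fact more careful than the paper's (which simply asserts the decomposition): the verification of the two conditions of Definition~\ref{def:dynamicalregion}, and the minimality argument via the failure of the dandelion condition at interior sources and sinks, are correct.

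There is, however, a genuine off-by-one error where you pass from the module to the path whose matching complex you compute, and it changes the classification. The module attached to a stable region of size $\ell_i$ consists of the $\ell_i$ stable non-boundary vertices together with the two unstable vertices bounding it, so it is an alternating path on $\ell_i+2$ vertices and $\ell_i+1$ edges; this is exactly what your own first step establishes. In the second step you nevertheless take the underlying path to have $\ell_i+3$ vertices. Since you quote Kozlov's criterion correctly (contractible exactly when the number of vertices is $\equiv 2 \pmod 3$), the correct count gives: the factor is contractible iff $\ell_i+2\equiv 2$, i.e.\ iff $\ell_i\equiv 0\pmod 3$, and is a sphere of dimension $\lceil(\ell_i+1)/3\rceil-1$ otherwise --- not the residues you state. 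A concrete test: the hexagon with two antipodal unstable vertices has $k=2$ and $\ell_1=\ell_2=2$; its two modules are alternating paths on four vertices, each with matching complex $S^0$, so $X(\tP)\simeq S^0\ast S^0\simeq S^1$, whereas your classification declares it contractible. That your residues agree with the printed statement is a compensation of errors rather than a confirmation: the paper's proof makes the complementary slip (it counts the $\ell_i+2$ vertices correctly but misquotes Kozlov's criterion as contractibility at $3s+1$ vertices), a confusion abetted by the paper's own inconsistent convention for $\tA_n$ (Figure~\ref{fig:alternating} gives it $n+1$ vertices, the proof of this proposition $n$ vertices), so the printed statement carries the same shift and is contradicted by the hexagon as well. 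If you repair the count, your argument goes through verbatim and yields the corrected conclusion: $X(\tP)$ is contractible iff some $\ell_i\equiv 0\pmod 3$, and $X(\tP)\simeq S^{\lceil(\ell_1+1)/3\rceil-1}\ast\cdots\ast S^{\lceil(\ell_k+1)/3\rceil-1}$ otherwise.
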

\begin{proof}
The unstable vertices are the boundary of certain modules. These modules, which correspond to stable regions, are alternating linear graphs with as many vertices as the size of the corresponding stable region, plus two (given by the unstable vertices bounding the region). By Lemma~\ref{lem:alternatingmultipath} and Kozlov's computations \cite[Proposition~4.6]{Kozlov99}, the multipath complex associated to an alternating graph $\tA_k$ with $k$ vertices is contractible if and only if $k = 3s + 1$, while it is homotopy equivalent to $S^{\lceil (k-1)/3 \rceil}$ otherwise. The statement follows.
\end{proof}
 
We conclude by observing that the same reasoning used to determine $X(\tP)$ works almost verbatim for linear graphs. In particular, one can obtain a precise description of the homotopy type of $X(\tL)$ for each linear graph $\tL$, which can be used to recover \cite[Theorem~1.1]{secondo}.

\subsection{Multipath complexes of small grids} 
 
Aim of this subsection is to compute the homotopy type of multipath complexes of small grids of type $\tL \times \tI_m$, where $\tL$ is a linear graph and $\tI_m$ a coherent linear graph. 
By \cite[Example~4.20]{primo}, the multipath cohomology groups of coherent linear graphs are trivial. We compute here the homotopy type of $X(\tI_n\times \tI_m)$.

 \begin{prop}\label{prop:coherent products}
 Let $n, m$ be non-negative integers, then
 \[
 X(\tI_n\times \tI_m)\simeq 
 \begin{cases}
\ * & \text{ if } \ n,m\neq 1\\
\ S^n & \text{ if } \ m=1\\
S^m & \text{ if } \ n=1
 \end{cases}
 \]
 \end{prop}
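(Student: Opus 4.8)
The plan is to read off the homotopy type from the simplicial link--deletion cofibre sequence, fed by a ``private edge'' contractibility criterion coming from the module machinery. I would record two tools first. \emph{(Private edges.)} If an acyclic digraph $\tG$ has an edge $e=(u,v)$ whose source $u$ has out-degree $1$ and whose target $v$ has in-degree $1$, then the full subgraph on $\{u,v\}$ is exactly the single edge $e$; this is a minimal dynamical region, i.e.\ a module, and $X(\{e\})$ is a point. By the consequence of Theorem~\ref{thm:module} that a module with contractible complex forces $X(\tG)$ to be contractible, such a $\tG$ satisfies $X(\tG)\simeq\ast$. Every subgraph of a grid is acyclic, so this applies throughout. \emph{(Cofibre sequence.)} For a vertex $e$ of $X(\tG)$ (an edge of $\tG$), $\mathrm{del}_{X(\tG)}(e)=X(\tG\setminus e)$, and since $\tG$ is acyclic $\mathrm{lk}_{X(\tG)}(e)=X(\tG_e)$, where $\tG_e$ deletes $e$ and every other edge having $s(e)$ as source or $t(e)$ as target (adding $e$ to a multipath $H$ keeps it a multipath exactly when $s(e)$ has out-degree $0$ and $t(e)$ in-degree $0$ in $H$, the cycle condition being vacuous). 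The sequence $\mathrm{lk}\hookrightarrow\mathrm{del}\to X(\tG)$ then gives: if $\mathrm{lk}$ is contractible, $X(\tG)\simeq X(\tG\setminus e)$; if $\mathrm{del}$ is contractible, $X(\tG)\simeq\Sigma\,X(\tG_e)$; and if both are contractible, $X(\tG)\simeq\ast$.

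I would write the vertices of $\tI_n\times\tI_m$ as $(i,j)$ with $0\le i\le n$, $0\le j\le m$, edges oriented towards increasing coordinates, so $(0,0)$ is the source and $(n,m)$ the sink. If $n=0$ or $m=0$ the graph is a coherent linear graph, so $X$ is a simplex, hence contractible; and because $\tI_n\times\tI_m\cong\tI_m\times\tI_n$, for the sphere cases I may assume $m=1$. The first move in every case is to delete the sink rung $e=(n,m-1)\to(n,m)$: in $\tG\setminus e$ the edge $(n-1,m)\to(n,m)$ has source out-degree $1$ and target in-degree $1$, so $\mathrm{del}$ is contractible, giving the uniform reduction $X(\tI_n\times\tI_m)\simeq\Sigma\,X(\tG^{\circ})$, where $\tG^{\circ}$ denotes the grid with the sink corner $(n,m)$ removed.

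For the contractible range $n,m\ge 2$ I would show $X(\tG^{\circ})\simeq\ast$, whence $X(\tI_n\times\tI_m)\simeq\Sigma\ast=\ast$. In $\tG^{\circ}$ the vertex $(n-1,m)$ is now a sink, and I apply the cofibre sequence to $f=(n-1,m-1)\to(n-1,m)$: in $\tG^{\circ}\setminus f$ the edge $(n-2,m)\to(n-1,m)$ is private, so $\mathrm{del}$ is contractible, and in $(\tG^{\circ})_f$ the right-column rung $(n,m-2)\to(n,m-1)$ is private, so $\mathrm{lk}$ is contractible; both contractible forces $X(\tG^{\circ})\simeq\ast$ (the two witnesses need $n\ge 2$ and $m\ge 2$, exactly this range). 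For the sphere case $m=1$, $\tG^{\circ}$ is the ladder with corner $(n,1)$ removed; deleting the pendant edge $(n-1,0)\to(n,0)$ has contractible link (it reduces, via the private edge $(n-2,1)\to(n-1,1)$, to a contractible complex), so $X(\tG^{\circ})\simeq X(\tI_{n-1}\times\tI_1)$. With the reduction step this yields the recursion $X(\tI_n\times\tI_1)\simeq\Sigma\,X(\tI_{n-1}\times\tI_1)$ for $n\ge 2$; the base case $\tI_1\times\tI_1$ is the directed square, whose multipath complex is a $4$-cycle, i.e.\ $S^1$, and induction gives $S^n$. The symmetry then delivers $S^m$ when $n=1$.

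The main obstacle is precisely the contractible range $n,m\ge 2$: the whole grid is a single dynamical module with no private edge, so Proposition~\ref{prop:regions} and Theorem~\ref{thm:module} give no splitting, and the homotopy type must be squeezed out of the cofibre sequence alone. The delicate part is engineering the edge choices so that, after deletion, a private edge appears in \emph{both} the deletion graph and the link graph; locating these witnesses and checking the degree conditions (together with the degenerate and base cases) is where the real work lies, even though each individual verification is elementary.
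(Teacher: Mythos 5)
Your proof is correct, but it takes a genuinely different route from the paper's. The paper's argument is a direct application of the dynamical-module machinery: for $m=1$ it exhibits the full module decomposition of the ladder $\tI_n\times \tI_1$ into two copies of $\tA_2$ and $n-1$ copies of $\tA_3$, so that Theorem~\ref{thm:module} gives $X(\tI_n\times \tI_1)\simeq X(\tA_2)^{\ast 2}\ast X(\tA_3)^{\ast (n-1)}$, a join of $n+1$ copies of $S^0$, hence $S^n$ at once; for $n,m\geq 2$ it locates a module isomorphic to $\tA_4$, whose multipath complex (the matching complex of a path on five vertices, via Lemma~\ref{lem:alternatingmultipath} and Kozlov's computation) is contractible, so the whole complex is a cone. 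You instead use only the degenerate instance of the module machinery -- a single ``private'' edge spans a one-edge dynamical module, forcing contractibility, which is the same observation the paper uses for polygonal graphs -- and push everything else through the link/deletion mapping-cone argument, obtaining the sphere from the suspension recursion $X(\tI_n\times\tI_1)\simeq \Sigma X(\tI_{n-1}\times\tI_1)$ with base case the $4$-cycle $X(\tI_1\times\tI_1)\simeq S^1$. Both of your tools are sound: the private-edge criterion does produce a minimal dynamical region in an acyclic digraph (the boundary conditions of Definition~\ref{def:dynamicalregion} are easily verified, acyclicity rules out a reversed parallel edge in the induced subgraph, and minimality is automatic since regions must contain an edge), and your identifications $\mathrm{del}(e)=X(\tG\setminus e)$ and $\mathrm{lk}(e)=X(\tG_e)$ are exactly right, with acyclicity being what makes the link description valid. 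I checked the degree conditions on all your witness edges and they hold in the stated ranges. As for what each approach buys: the paper's is shorter and more structural -- the sphere appears immediately as a join of $0$-spheres, and the same method extends to $\tA_n\times\tI_m$ and general $\tL\times\tI_1$ -- but it imports Kozlov's matching-complex computations; yours is more elementary and self-contained, needing nothing beyond standard facts about collapsing contractible subcomplexes, at the cost of more case-by-case bookkeeping.

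One claim in your closing paragraph is wrong, although it does not affect the validity of the proof: for $n,m\geq 2$ the grid is \emph{not} a single dynamical module. It has no private edge, as you say, but it still decomposes nontrivially -- for instance the two edges leaving the source corner form an $\tA_2$-module -- and the paper's proof of precisely this contractibility case proceeds by finding an $\tA_4$-module in that decomposition. So Proposition~\ref{prop:regions} and Theorem~\ref{thm:module} are far from powerless in this range; they are, in fact, the paper's entire argument.
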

 
 \begin{proof}
 The case $n$ or $m$ equal to $0$ is covered in \cite[Example~4.20]{primo}. Assume that $m=1$, the case $n=1$ being analogous. The decomposition in to dynamical modules of $\tI_n\times \tI_1$ is shown in Figure~\ref{fig:gridI_nxI1}.
 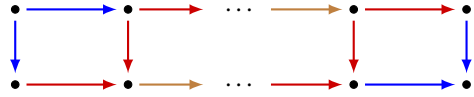
\begin{figure}[H]
     \centering
     \begin{tikzpicture}[scale=1.0][baseline=(current bounding box.center)]
					\tikzstyle{point}=[circle,thick,draw=black,fill=black,inner sep=0pt,minimum width=2pt,minimum height=2pt]
					\tikzstyle{arc}=[shorten >= 8pt,shorten <= 8pt,->, thick]
					%\node at (-1,0) {$I_n =$};
					\node[above] (v0) at (0,0) {};
					\draw[fill] (0,0)  circle (.05);
					\node[above] (v1) at (1.5,0) {};
					\draw[fill] (1.5,0)  circle (.05);
					\node[] at (3,0) {\dots};
					\node[above] (v4) at (4.5,0) {};
					\draw[fill] (4.5,0)  circle (.05);
					\node[above] (v5) at (6,0) {};
					\draw[fill] (6,0)  circle (.05);
					\draw[thick, blue, -latex] (0.15,0) -- (1.35,0);
					\draw[thick, bunired, -latex] (1.65,0) -- (2.5,0);
					\draw[thick, brown, -latex] (3.4,0) -- (4.35,0);
					\draw[thick, bunired, -latex] (4.65,0) -- (5.85,0);
					%\node in degree -1
					\node[above] (v0) at (0,-1) {};
					\draw[fill] (0,-1)  circle (.05);
					\node[above] (v1) at (1.5,-1) {};
					\draw[fill] (1.5,-1)  circle (.05);
					\node[] at (3,-1) {\dots};
					\node[above] (v4) at (4.5,-1) {};
					\draw[fill] (4.5,-1)  circle (.05);
					\node[above] (v5) at (6,-1) {};
					\draw[fill] (6,-1)  circle (.05);
					\draw[thick, bunired, -latex] (0.15,-1) -- (1.35,-1);
					\draw[thick, brown, -latex] (1.65,-1) -- (2.5,-1);
					\draw[thick, bunired, -latex] (3.4,-1) -- (4.35,-1);
					\draw[thick, blue, -latex] (4.65,-1) -- (5.85,-1);
					%vertical edges
					\draw[thick, blue, -latex] (0,-0.15) -- (0,-0.85);
					\draw[thick, bunired, -latex] (1.5,-0.15) -- (1.5,-0.85);
					\draw[thick, bunired, -latex] (4.5,-0.15) -- (4.5,-0.85);
					\draw[thick, blue, -latex] (6,-0.15) -- (6,-0.85);
			\end{tikzpicture}
     \caption{Decomposition in to dynamical modules of $\tI_n\times\tI_1$.}
     \label{fig:gridI_nxI1}
 \end{figure}
 The simplicial complex $X(\tI_n\times \tI_1)$ is then homotopy equivalent, in virtue of Theorem~\ref{thm:module}, to an iterated join:
 \[
 X(\tI_n\times \tI_1)\cong X(\tA_2 \sqcup \tA_3 \sqcup \dots \sqcup \tA_3 \sqcup \tA_2)\simeq X(\tA_2)^{*2}*X(\tA_3)^{*(n-1)} \ .
 \]
 As $X(\tA_2)\simeq X(\tA_3)$, and their geometric realisation is~the $0$-dimensional sphere, we get $X(\tI_n\times \tI_1)\simeq S^n$.
 
 Assume now both $n,m\geq 2$. Then, up to reversing all the orientations, we get the graph illustrated in Figure~\ref{fig:coherent grinds n x m}. In particular, in the decomposition in to dynamical modules, there is a module which is isomorphic to $\tA_4$; hence,  $X(\tI_n\times \tI_m)$ is homotopy equivalent to $X(\tA_4)\ast Y$, where $Y = X(C(\tA_4))$ -- see Proposition~\ref{prop:regions}. 
 As the multipath complex $X(\tA_4)$ is contractible, we get that also  $X(\tI_n\times \tI_m)$ is contractible, concluding the proof.
 \begin{figure}
     \centering
\begin{tikzpicture}[scale=1.0][baseline=(current bounding box.center)]
					\tikzstyle{point}=[circle,thick,draw=black,fill=black,inner sep=0pt,minimum width=2pt,minimum height=2pt]
					\tikzstyle{arc}=[shorten >= 8pt,shorten <= 8pt,->, thick]
					\node[above] (v0) at (0,1) {};
					\draw[fill] (0,1)  circle (.05);
					\node[above] (v1) at (1.5,1) {};
					\draw[fill] (1.5,1)  circle (.05);
					\node[above] (v2) at (3,1) {};
					\draw[fill] (3,1)  circle (.05);
					\node at (5.15,1) {$\dots$};
					\node at (3.75,0) {$\dots$};
					\node at (2.75,-1) {$\dots$};
					\node at (0.75,-2) {$\dots$};

					\node[above] (v2) at (4.5,1) {};
					\draw[fill] (4.5,1)  circle (.05);
					
					\node[above] (w0) at (0,-1) {};
					\draw[fill] (0,-1)  circle (.05);
					%\node[above] (w1) at (3.0,-1) {};
					%\draw[fill] (3.0,-1)  circle (.05);
					\node[above] (w2) at (1.5,-1) {};
					\draw[fill] (1.5,-1)  circle (.05);

					\node[above] (w0) at (0,-2) {};
					\draw[fill] (0,-2)  circle (.05);
					
					\draw[thick, blue, -latex] (0.15,1) -- (1.35,1);
					\draw[thick, bunired, latex-] (2.85,1) -- (1.65,1);
					\draw[thick, brown, -latex]  (3.15,1) -- (4.35,1) ;
					%\draw[thick, blue, -latex] (5.85,1) -- (4.65,1);
					%\draw[thick, bunired, latex-]  (6.15,1) -- (7.35,1) ;
% edges at 0
					\node[above] (v0) at (0,0) {};
					\draw[fill] (0,0)  circle (.05);
					%\node[above] (w0) at (4.5,0) {};
					%\draw[fill] (4.5,0)  circle (.05);
					%\node[above] (w1) at (6.0,0) {};
					%\draw[fill] (6.0,0)  circle (.05);
					%\node[above] (w2) at (7.5,0) {};
					%\draw[fill] (7.5,0)  circle (.05);
					\node[above] (v1) at (1.5,0) {};
					\draw[fill] (1.5,0)  circle (.05);
					\node[above] (v2) at (3,0) {};
					\draw[fill] (3,0)  circle (.05);
					%\node at (3.75,0) {$\dots$};
					%\node at (3.75,1) {$\dots$};
					\draw[thick, bunired, -latex] (0.15,0) -- (1.35,0);
					\draw[thick, brown, latex-] (2.85,0) -- (1.65,0);
					\draw[thick, brown, -latex]  (0.15,-1) -- (1.35,-1) ;
					%\draw[thick, bunired, latex-] (5.85,0) -- (4.65,0);
					%\draw[thick, bunired, latex-]  (6.15,0) -- (7.35,0) ;
% vertical edges
					\draw[thick, blue, -latex] (0,0.85) -- (0,0.15);
					\draw[thick, bunired, -latex] (1.5,0.85) -- (1.5,0.15);
					\draw[thick, brown, -latex] (3,0.85) -- (3,0.15);
					\draw[thick, bunired, latex-] (0.0,-0.85) -- (0.0,-0.15);
					\draw[thick, brown, -latex] (0.0,-1.15) -- (0.0,-1.85);
					\draw[thick, brown, latex-] (1.5,-0.85) -- (1.5,-0.15);
					%\draw[thick, bunired, -latex] (7.5,0.85) -- (7.5,0.15);
			\end{tikzpicture}
     \caption{Part of the decomposition of $\tI_n\times \tI_m$ in to modules; in blue an $\tA_2$ component, in red an $\tA_4$ component.}
     \label{fig:coherent grinds n x m}
 \end{figure}
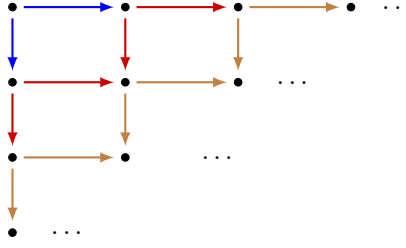
 \end{proof}

\begin{rem}\label{rem:noKunneth}
By Proposition~\ref{prop:coherent products}, although the homotopy type of $\tI_n$ is trivial, products of type $\tI_n\times \tI_1$ yield topological spheres. This implies that we cannot expect a K\"unneth-type formula for multipath cohomology.
\end{rem}
Now, we consider  another simple, yet interesting case: $\tA_n \times \tI_m$. First we recall that
a \emph{tree} is an undirected graph in which every two vertices are connected by exactly one path. 
A \emph{caterpillar} graph $\tG_n(m_1,\dots,m_n)$ is a tree consisting of a path on~$n$ vertices $v_1,\dots,v_n$, such that 
every vertex $v_i$ is connected to exactly $m_i$ other distinct vertices. 
An example of caterpillar graphs is given in Figure~\ref{fig:caterpillarT}.
Note that the homotopy-type of the matching complex of caterpillar graphs has been determined in \cite[Theorem~5.13]{caterpillar}.
\begin{figure}[H]
    \centering
    \begin{tikzpicture}%[scale=0.5]
    [baseline=(current bounding box.center)]
				\tikzstyle{point}=[circle,thick,draw=black,fill=black,inner sep=0pt,minimum width=2pt,minimum height=2pt]
					\tikzstyle{arc}=[shorten >= 8pt,shorten <= 8pt,->, thick]

					\node[above] (w0) at (4.5,1) {$v_2$};
					\draw[fill] (4.5,1)  circle (.05);
					\node[above] (w1) at (6.0,1) {$v_3$};
					\draw[fill] (6.0,1)  circle (.05);
					\node[above] (w2) at (7.5,1) {$v_{n-2}$};
					\draw[fill] (7.5,1)  circle (.05);
					\node[above] () at (9.0,1) {$v_{n-1}$};
					\draw[fill] (9.0,1)  circle (.05);
					\node[above] () at (10.5,1) {$v_{n}$};
					\draw[fill] (10.5,1)  circle (.05);

					\node[above] (v2) at (3,1) {$v_1$};
					\draw[fill] (3,1)  circle (.05);

					\draw[thick, bunired]  (3.15,1) -- (4.35,1) ;
					\draw[thick, bunired] (5.85,1) -- (4.65,1);
					\draw[thick, bunired]  (8.75,1) -- (7.65,1) ;
					\draw[thick, bunired]  (9.15,1) -- (10.35,1) ;

					\node[above] (w0) at (4.5,0) {};
					\draw[fill] (4.5,0)  circle (.05);

					\node[above] (v1) at (1.5,0) {};

					\draw[fill] (9,0)  circle (.05);
					%\node at (6.75,0) {$\dots$};
					\node at (6.75,1) {$\dots$};

					\draw[thick, bunired] (9,0.85) -- (9,0.15);
					\draw[thick, bunired] (4.5,0.85) -- (4.5,0.15);
			\end{tikzpicture}
    \caption{A caterpillar graph $\tG_n(0,1,\dots,1,0) = \tG_{n-2}(2,\dots,2)$.}
    \label{fig:caterpillarT}
\end{figure}
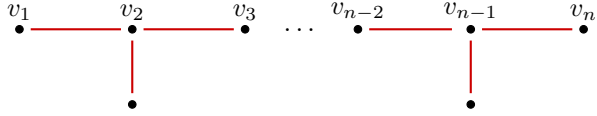

We need the homotopy type of matching complexes of some specific types of caterpillar graphs; namely, caterpillar graphs of type $\tG_{2n+1}(0,1,0\dots)$ with a single leg at each vertex in even position, and $\tG_n(1,0,1,\dots)$.
For $k\geq 1$, let $L_k(a_1,\dots,a_k)$ denote the sum
%\[
%L_k(a_1,\dots,a_k)=\sum_{\substack{l=1,\dots, k, \\ 1\leq i_1<i_2<\dots<i_l\leq k}} (i_2-i_1)(i_3-i_2)\cdots(i_l-i_{l-1})a_{i_1}a_{i_2}\cdots a_{i_l} \ .
%\]
%\CC{I think the definition should be
\[
L_k(a_1,\dots,a_k) = \sum_{i=1}^{k} a_i + \sum_{\substack{l=2,\dots, k, \\ 1\leq i_1<i_2<\dots<i_l\leq k}} (i_2-i_1)(i_3-i_2)\cdots(i_l-i_{l-1})a_{i_1}a_{i_2}\cdots a_{i_l} \ .
\]
The homotopy type of matching complexes of caterpillar graphs is then given as follows:

\begin{thm}[{\cite[Theorem~5.16]{caterpillar}}]\label{thm:caterpillarsimple}
Consider the caterpillar graph $\tG_{2k-1}(m_1,0,m_2,0,\dots,m_{k-1},0,m_{k})$ for $k\in\bN$, $m_i >0$. Then, the homotopy type of the associated matching complex is given by
\[
M(\tG_{2k-1}(m_1,0,m_2,0,\dots,m_{k-1},0,m_{k}))\simeq \bigvee_{L_k(a_1,\dots,a_k)}S^{k-1} \ ,
\]
where $a_i=m_i-1$ for $i=1,\dots,k$.
\end{thm}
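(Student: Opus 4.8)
The plan is to induct on $k$, peeling off the terminal leg-cluster with the standard cofiber sequence for matching complexes, and to read off the number of spheres from the resulting recursion. For the base case $k=1$ the graph $\tG_1(m_1)$ is the star $K_{1,m_1}$; no two of its edges are disjoint, so $M(K_{1,m_1})$ is $m_1$ isolated points, i.e. $\bigvee_{m_1-1}S^0=\bigvee_{L_1(a_1)}S^0$ since $L_1(a_1)=a_1=m_1-1$.

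The engine is the standard fact that, for an edge $e=\{u,v\}$ of a graph $H$, the link of the vertex $e$ in $M(H)$ is $M(H\setminus\{u,v\})$, so the subcomplex inclusion $M(H\setminus e)\hookrightarrow M(H)$ has cofiber $\Sigma M(H\setminus\{u,v\})$. I would apply this to the last spine edge $e=\{v_{2k-2},v_{2k-1}\}$. Deleting $e$ disconnects the terminal star $K_{1,m_k}$ (centre $v_{2k-1}$) from the rest $R$, inside which $v_{2k-2}$ has become an extra leg at $v_{2k-3}$; hence $R=\tG_{2k-3}(m_1,0,\dots,0,m_{k-1}+1)$, and since the matching complex of a disjoint union is a join, $M(H\setminus e)=M(K_{1,m_k})\ast M(R)\simeq\bigvee_{m_k-1}\Sigma M(R)$. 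On the other hand $H\setminus\{v_{2k-2},v_{2k-1}\}=\tG_{2k-3}(m_1,0,\dots,0,m_{k-1})$ up to isolated vertices, so the cofiber is $\Sigma M(\tG_{2k-3}(m_1,\dots,m_{k-1}))$. By the inductive hypothesis $M(R)$ and $M(\tG_{2k-3}(\dots,m_{k-1}))$ are wedges of $S^{k-2}$, so both the subcomplex and the cofiber are wedges of $S^{k-1}$.

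Feeding this into the long exact sequence of the cofibration shows that $\widetilde{H}_i(M(\tG_{2k-1}(\dots)))$ vanishes for $i\neq k-1$ and is free of rank equal to the sum of the two ranks for $i=k-1$; together with $(k-2)$-connectivity (for $k\ge 3$ this gives simple connectivity, so a wedge of spheres by Hurewicz, while for $k=2$ one is just recognising a connected graph as a wedge of circles) this yields $M(\tG_{2k-1}(\dots))\simeq\bigvee_N S^{k-1}$. Counting spheres, and writing everything in terms of $a_i=m_i-1$ (so the extra leg contributes $a_{k-1}+1$ and the star contributes the multiplicity $a_k=m_k-1$), gives the recursion
\[
L_k(a_1,\dots,a_k)=L_{k-1}(a_1,\dots,a_{k-1})+a_k\,L_{k-1}(a_1,\dots,a_{k-2},a_{k-1}+1)\ .
\]

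The main obstacle is then purely combinatorial: checking that the closed form satisfies this recursion with base $L_1(a_1)=a_1$. Splitting the defining sum over increasing index sets according to whether the top position $k$ is used recovers the two terms, but reconciling the weight $(k-\max I')$ produced when $k$ is included with the effect of replacing $a_{k-1}$ by $a_{k-1}+1$ in $L_{k-1}$ requires a small bijective or transfer-matrix argument on the gap-product weights $(i_2-i_1)\cdots(i_l-i_{l-1})$. As an independent cross-check I would verify $\widetilde{\chi}(M(\tG_{2k-1}(\dots)))=(-1)^{k-1}L_k$ directly from the signed count of matchings via a transfer matrix along the spine, where the factors $(i_{j+1}-i_j)$ emerge from the chains of even, legless spine vertices lying between consecutive occupied clusters.
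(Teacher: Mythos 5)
There is no internal proof to compare you against: the paper imports this statement verbatim from the cited reference \cite[Theorem~5.16]{caterpillar} and uses it as a black box (together with \cite[Corollary~5.12]{caterpillar}, \cite[Theorem~5.13]{caterpillar} and \cite[Equation~(1)]{caterpillar}) in its grid computations. So your proposal has to stand on its own, and in essence it does. The cofiber sequence $M(H\setminus e)\hookrightarrow M(H)\to \Sigma M(H\setminus\{u,v\})$ is the right engine; your identifications of the two smaller caterpillars are correct ($H\setminus e$ is $\tK_{1,m_k}\sqcup\tG_{2k-3}(m_1,0,\dots,0,m_{k-1}+1)$, while $H\setminus\{v_{2k-2},v_{2k-1}\}$ is $\tG_{2k-3}(m_1,0,\dots,0,m_{k-1})$ plus isolated vertices); and the recursion you single out as the main obstacle,
\[
L_k(a_1,\dots,a_k)=L_{k-1}(a_1,\dots,a_{k-1})+a_k\,L_{k-1}(a_1,\dots,a_{k-2},a_{k-1}+1)\ ,
\]
does hold, by a computation shorter than you fear. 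Writing $L_k=\sum_{\emptyset\neq I\subseteq\{1,\dots,k\}}w(I)\prod_{i\in I}a_i$ with $w(I)=\prod_{j}(i_{j+1}-i_j)$, split the left side according to whether $k\in I$, and the right side according to whether $k-1$ lies in the index set after expanding $a_{k-1}+1$ multilinearly; the leftover terms match because $w(J)\cdot 1+w(J)\,(k-1-\max J)=w(J)\,(k-\max J)$ for every nonempty $J\subseteq\{1,\dots,k-2\}$. (This recursion is quite possibly the ``Equation~(1)'' of \cite{caterpillar} that the paper itself invokes in the proof of Lemma~\ref{lem:techlem2}.) Together with the base case $L_1(a_1)=a_1$, which you verified, the sphere count is correct.

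The one step you assert rather than prove is the ``$(k-2)$-connectivity''. The long exact sequence only gives that reduced homology is free and concentrated in degree $k-1$; to upgrade this to a wedge of spheres via Hurewicz and Whitehead you need simple connectivity when $k\ge 3$, and that is not a consequence of the homology computation. Fortunately it follows from the decomposition you already use: $M(H)=M(H\setminus e)\cup \mathrm{star}(e)$, where $\mathrm{star}(e)$ is a cone, $M(H\setminus e)$ is simply connected by induction (a wedge of copies of $S^{k-1}$ with $k-1\ge 2$, possibly an empty wedge, i.e.\ a point), and the intersection, namely the link $M(H\setminus\{u,v\})$, is path-connected by induction (a wedge of copies of $S^{k-2}$ with $k-2\ge 1$, or a point); van Kampen then gives $\pi_1(M(H))=1$, and your $k=2$ case is saved, as you note, by the complex being a connected $1$-complex. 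You should also state explicitly the convention that an empty wedge is a point, since when all $a_i=0$ the inductive claims read ``contractible''; with these two patches your argument is complete and is a legitimate self-contained substitute for the external citation.
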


A straightforward application of Theorem~\ref{thm:caterpillarsimple} is the following computation:

\begin{lem}\label{lem:catalt}
Consider the caterpillar graph $\tG_{s}(1,0,1,\dots)$ on $s\geq 2$ central vertices, endowed with the alternating orientation as illustrated in Figure~\ref{fig:grids n x m} (blue part). Then, the homotopy type of the multipath complex is given by
\[ X(\tG_{s}(1,0,1,\dots)) \simeq \begin{cases}
S^{\frac{s}{2}-1} & s \text{ even}, \\
\ast &\text{otherwise}
\end{cases}\]
and it is either contractible or a sphere.
\end{lem}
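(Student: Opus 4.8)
The plan is to first replace the multipath complex by a matching complex, and then invoke Theorem~\ref{thm:caterpillarsimple}. Since the orientation displayed in Figure~\ref{fig:grids n x m} is alternating --- the central vertices in odd position being sources and those in even position sinks (with the legs taking the complementary roles) --- the whole oriented graph is a stable dynamical region of itself. Hence, by Lemma~\ref{lem:alternatingmultipath} (equivalently, by the isomorphism $M(\tG)\cong X(\tG_o)$ that holds precisely for alternating $o$), the complex $X(\tG_s(1,0,1,\dots))$ is isomorphic to the matching complex of the underlying undirected caterpillar. It therefore suffices to compute the homotopy type of that matching complex, and for this I would split into the parity of $s$.

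For $s$ odd the caterpillar is already of the shape required by Theorem~\ref{thm:caterpillarsimple}: writing $s = 2k-1$, its leg multiplicities are $(m_1,0,m_2,0,\dots,0,m_k)$ with every $m_i = 1$, so $k = (s+1)/2$. Then $a_i = m_i - 1 = 0$ for all $i$, whence $L_k(0,\dots,0) = 0$ and the theorem produces the empty wedge $\bigvee_0 S^{k-1}$, i.e.\ a contractible complex.

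For $s$ even the graph is not literally of that shape, because the pattern $(1,0,\dots,1,0)$ terminates with a legless central vertex $v_s$. The key observation is that $v_s$ has degree one, being adjacent only to $v_{s-1}$, and so is indistinguishable from a leg of $v_{s-1}$. Re-reading the caterpillar in this way, the central path becomes $v_1,\dots,v_{s-1}$ of odd length $s-1 = 2k-1$ with $k = s/2$, and the leg multiplicities become $(1,0,1,0,\dots,1)$ with the last nonzero entry raised to $2$; that is, $m_1 = \dots = m_{k-1} = 1$ and $m_k = 2$, so $a_i = 0$ for $i < k$ and $a_k = 1$. Since every term of $L_k$ beyond $\sum_i a_i$ is a product of at least two distinct $a_i$'s and only one of them is nonzero, all such terms vanish and $L_k = \sum_i a_i = 1$. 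Theorem~\ref{thm:caterpillarsimple} then gives $\bigvee_1 S^{k-1} = S^{k-1} = S^{s/2 - 1}$, as claimed.

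The only genuinely delicate step is the reinterpretation of the legless endpoint $v_s$ as an additional leaf of $v_{s-1}$ in the even case, which is what brings the graph inside the hypotheses of Theorem~\ref{thm:caterpillarsimple}; once that identification is in place, both the evaluation of $L_k$ and the reading-off of the sphere dimension are immediate. As a consistency check one can verify the small cases directly: $s=2$ gives the path on three vertices, whose matching complex is $S^0 = S^{s/2-1}$, while $s=3$ gives the path on five vertices, whose matching complex is a contractible tree.
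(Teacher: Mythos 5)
Your proof is correct and takes essentially the same route as the paper's: the same re-reading of the legless endpoint $v_s$ as an extra leg of $v_{s-1}$ (so the even case becomes $\tG_{s-1}(1,0,1,\dots,1,0,2)$), the same evaluation of $L_k(0,\dots,0)=0$ and $L_k(0,\dots,0,1)=1$, and the same appeal to Theorem~\ref{thm:caterpillarsimple}. If anything, your write-up is more careful than the paper's, since it makes explicit the passage from multipath to matching complexes via Lemma~\ref{lem:alternatingmultipath} and verifies the small cases.
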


\begin{proof}
When $s$ is even, the caterpillar graph $\tG_s(1,0,1,\dots)$ can be seen as the caterpillar graph $\tG_{s-1}(1,0,1,\dots,1,0,2)$ on $s-1$ central vertices. The $m_1,\dots,m_k$ appearing in the statement of Theorem~\ref{thm:caterpillarsimple} are, in this case, all equal to $1$. When $s$ is odd, the sequence $(a_1,\dots,a_{s})$ is just the sequence $(0,\dots,0)$. When $s$ is even we have that $(a_1,\dots,a_{s-1})$ is the sequence $(0,\dots,0,1)$. Therefore, for $s$ odd, $L(0,\dots,0)=0$, whereas, for $s$ even, $L(0,\dots,0,1)=1$. The statement now follows from Theorem~\ref{thm:caterpillarsimple}.
\end{proof}

The computation of the homotopy type of matching complexes of caterpillar graphs is usually complicated; when the strings have a predictable pattern of zeros, computations might be carried on by looking at the $L_k$ polynomials. For example, we have the following computation, needed later, whose proof cannot be directly derived from Theorem~\ref{thm:caterpillarsimple};

\begin{lem}\label{lem:techlem2}
Assume $t_1 = 1 $, $t_{i} = 0$ for each $i$ such that $k >i > 1$, and $t_{k} \in \{ 0 ,1 \}$.
Then, \[ L_{k}(t_1, ... , t_{k}) = \begin{cases}
k + 1 & t_k = 1 \\ 1 & t_k = 0
\end{cases}\ ,\]
for all $k>3$.
\end{lem}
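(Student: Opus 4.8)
The plan is to exploit the sparsity of the sequence $(t_1,\dots,t_k)$. Every summand in the second (higher-order) sum defining $L_k$ carries a factor $t_{i_1}t_{i_2}\cdots t_{i_l}$, so only index tuples \emph{all} of whose entries $t_{i_j}$ are nonzero can contribute. Under the hypotheses the nonzero entries occur only at position $1$ (where $t_1 = 1$) and, when $t_k = 1$, at position $k$; every interior entry vanishes. Thus I expect the multi-index sum to collapse to a single term (or none), and the whole computation to reduce to enumerating subsets of $\{1\}$ or $\{1,k\}$.

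First I would dispose of the case $t_k = 0$. Here $t_1 = 1$ is the only nonzero entry, so the linear part $\sum_{i=1}^k t_i$ equals $1$. Any tuple $i_1 < \cdots < i_l$ with $l \geq 2$ necessarily contains an index $i_j \neq 1$, forcing $t_{i_j} = 0$ and hence $t_{i_1}\cdots t_{i_l} = 0$; the higher-order sum therefore vanishes identically, giving $L_k(t_1,\dots,t_k) = 1$.

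Next I would treat $t_k = 1$, where the nonzero entries are precisely $t_1 = t_k = 1$. The linear part now contributes $t_1 + t_k = 2$. In the higher-order sum a tuple produces a nonzero product only if each of its indices lies in $\{1,k\}$; since the indices are strictly increasing and $l \geq 2$, the unique admissible tuple is $(i_1,i_2) = (1,k)$ with $l = 2$. Its weight is the single gap factor $(i_2 - i_1) = k-1$ times $t_1 t_k = 1$, so the higher-order sum equals $k-1$ and $L_k(t_1,\dots,t_k) = 2 + (k-1) = k+1$.

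There is no genuine obstacle: once one observes that the vanishing interior entries annihilate every multi-index term except possibly $(1,k)$, the result is a one-line evaluation in each case. I note in passing that the argument uses nothing about the restriction $k > 3$ and in fact establishes the formula for all $k \geq 2$; the stated range simply matches the regime in which the lemma is later invoked.
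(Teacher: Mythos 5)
Your proof is correct, but it takes a genuinely different route from the paper's. The paper disposes of this lemma in one line, by induction on $k$ together with the recursion relation \cite[Equation~(1)]{caterpillar} for the polynomials $L_k$; you instead evaluate the defining sum of $L_k$ directly, observing that every higher-order term carries the factor $t_{i_1}\cdots t_{i_l}$, which vanishes unless all indices lie in the support of $(t_1,\dots,t_k)$ --- that is, in $\{1\}$ when $t_k=0$ and in $\{1,k\}$ when $t_k=1$. Your two case evaluations are correct: for $t_k=0$ the sum collapses to the linear term $t_1=1$, and for $t_k=1$ it collapses to $t_1+t_k=2$ plus the single surviving tuple $(i_1,i_2)=(1,k)$ of weight $(i_2-i_1)=k-1$, giving $k+1$. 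What your approach buys is self-containedness (it uses only the explicit formula for $L_k$ displayed in the paper, not the external recursion from the caterpillar reference) and, as you note, a slightly stronger statement: the formula holds for all $k\geq 2$, so the hypothesis $k>3$ is not actually needed. The paper's inductive route is shorter to state but hides the computation inside the cited relation; yours makes the mechanism (sparsity killing all but one multi-index term) explicit.
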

\begin{proof}
The statement follows, using the relation~\cite[Equation~(1)]{caterpillar}, by induction. 
\end{proof}

Set $\tS_1\coloneqq \tG_2(2,0)= \tG_{1}(3)$ and let $\tS_{n}\coloneqq \tG_{2n-1}(2,0,1,0,\dots,0,1,0,2)$ be the caterpillar graph with a single leg at each internal vertex in odd position, endowed with an alternating orientation (i.e.~all vertices are either sources or sinks).
Let $\tC_n$ be the caterpillar graph $\tG_{n+1}(2,0,1,0,1,...)$ where $0$ and $1$ alternate along the sequence, endowed with an alternating orientation; note that we have~$\tS_n = \tC_{2n-1}$.

\begin{lem}\label{lem:caterpillar20101}
 We have the following homotopy equivalence
\[ X(\tC_{n}) \simeq M(\tG_{n+1}(2,0,1,0,1,...)) \simeq \begin{cases}
S^{k-1} & n = 2k-2\\
\bigvee^{k+1} S^{k-1} & n = 2k-1
\end{cases}\]
where $M(\tG)$ denotes the matching complex.
 %$\tT_n$ is given by $$X(\tT_n)\simeq \bigvee_{i=1}^{n+1} S^{n-1} \ ,$$
In particular, $X(\tC_{n})$ is a wedge of spheres.
\end{lem}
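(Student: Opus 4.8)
The plan is to leverage the isomorphism $X(\tC_n) \cong M(\tG_{n+1}(2,0,1,0,1,\dots))$ coming from Lemma~\ref{lem:alternatingmultipath}, since $\tC_n$ is endowed with an alternating orientation and so is a stable dynamical region (every vertex is a source or a sink). This reduces the entire computation to determining the homotopy type of the matching complex of the underlying caterpillar graph, for which Theorem~\ref{thm:caterpillarsimple} is the natural tool. The caterpillar $\tG_{n+1}(2,0,1,0,1,\dots)$ has legs attached at the odd-position vertices (the even-position vertices carrying $0$), with the first vertex carrying two legs and the remaining leg-bearing vertices carrying one leg each, which matches the alternating-zeros shape required by the theorem.

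First I would carefully identify the parameters $m_1,\dots,m_k$ in the statement of Theorem~\ref{thm:caterpillarsimple} for the graph $\tG_{n+1}(2,0,1,0,1,\dots)$, separating the two parities of $n$. In the even case $n = 2k-2$ the graph should be reindexed as a caterpillar of the form $\tG_{2k'-1}(m_1,0,\dots,0,m_{k'})$ with the appropriate $k'$, reading off $m_1 = 2$ and the remaining $m_i$ equal to $1$ (with a possible adjustment at the final vertex to absorb the parity, exactly as in the proof of Lemma~\ref{lem:catalt}). Then $a_i = m_i - 1$ gives the sequence $(1,0,0,\dots,0)$, and I would invoke Lemma~\ref{lem:techlem2} with $t_1 = 1$, $t_i = 0$ for $1 < i < k$, and $t_k = 0$, yielding $L_k = 1$; hence by Theorem~\ref{thm:caterpillarsimple} the complex is a single sphere $S^{k-1}$. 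In the odd case $n = 2k-1$ the same reindexing should produce the sequence $(a_1,\dots,a_k) = (1,0,\dots,0,1)$, so that Lemma~\ref{lem:techlem2} with $t_k = 1$ gives $L_k = k+1$, and Theorem~\ref{thm:caterpillarsimple} delivers $\bigvee^{k+1} S^{k-1}$.

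The main obstacle, and the step demanding the most care, is the bookkeeping of the reindexing: I must match the graph $\tG_{n+1}(2,0,1,0,1,\dots)$ to the canonical form $\tG_{2k-1}(m_1,0,m_2,0,\dots,m_k)$ required by Theorem~\ref{thm:caterpillarsimple}, getting both the number of central vertices with nonzero legs and the values $m_i$ exactly right for each parity of $n$. In particular the extra leg at the first vertex (coefficient $2$ rather than $1$) and the terminal behaviour determine whether the last $a_i$ is $0$ or $1$, which is precisely what distinguishes $L_k = 1$ from $L_k = k+1$; mirroring the parity argument already used in Lemma~\ref{lem:catalt} is the safest route. Once the indices are pinned down, the homotopy type follows immediately from the cited theorem together with Lemma~\ref{lem:techlem2}, and the conclusion that $X(\tC_n)$ is in all cases a wedge of spheres is then transparent.
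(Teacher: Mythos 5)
Your proposal is correct and follows exactly the paper's route: reduce to the matching complex via Lemma~\ref{lem:alternatingmultipath} (since the alternating orientation makes $\tC_n$ a stable dynamical region), then apply Theorem~\ref{thm:caterpillarsimple} together with Lemma~\ref{lem:techlem2}. The only difference is that you spell out the reindexing bookkeeping (absorbing the trailing legless vertex to get $m_k=2$, hence $a_k=1$, in the odd case), which the paper's proof states follows ``directly'' without writing it out.
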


\begin{proof}
The alternating orientation on $\tC_n$ implies that $\tC_n$ is a stable dynamical region; hence, by Lemma~\ref{lem:alternatingmultipath}, we have the homotopy equivalence $X(\tC_{n}) \simeq M(\tG_{n+1}(2,0,1,0,1,...))$ with the matching complex. 
Then, the statement follows directly from Theorem~\ref{thm:caterpillarsimple} and Lemma~\ref{lem:techlem2}.
\end{proof}

We can now compute the homotopy type of the multipath complex of grids $\tA_n \times \tI_m$.

\begin{prop}
Let $n, m$ be positive integers, then
\[ X(\tA_n \times \tI_m) \simeq M(\tG_{n+1}(1,...,1))^{\ast(m-1)}\ast X(\tA_n \times \tI_1) \ .\]
In particular, $X(\tA_n \times \tI_m)$ is contractible if $n$ is even, and a sphere of dimension  $(m-1)\frac{n+1}{2} + n$ when $n$ is odd. 
\end{prop}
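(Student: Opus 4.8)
The plan is to realise the grid $\tA_n \times \tI_m$ as an $(n+1)\times(m+1)$ array of vertices --- columns indexed by the $m+1$ vertices of $\tI_m$ and rows by the $n+1$ vertices of $\tA_n$ --- in which the vertical edges (inside each column) carry the alternating orientation of $\tA_n$, so that even rows are vertical sources and odd rows vertical sinks, while the horizontal edges (between consecutive columns) are all coherently oriented from left to right. Since both $\tA_n$ and $\tI_m$ are acyclically oriented, so is their product, hence condition \ref{item:cicle} of Definition~\ref{def:dynamicalregion} is vacuous and only the coherent dandelion condition \ref{item:dandelion} must be tracked. I would then run the iterative module-construction algorithm from a single horizontal edge: at a vertical source (even row) the two vertical edges and the outgoing right horizontal all share that vertex as source, while the incoming left horizontal is the unique edge on the opposite side of the coherent dandelion; dually, at a vertical sink (odd row) the two verticals and the incoming left horizontal share the target, and the outgoing right horizontal is split off. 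Carrying this out shows that the module containing the vertical edges of column $j$ is exactly those vertical edges together with the right horizontal legs at the even rows and the left horizontal legs at the odd rows.

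Thus the decomposition of Theorem~\ref{thm:module} has one module per column. For an interior column $1\le j\le m-1$ each of the $n+1$ rows contributes exactly one leg, so the module is a comb caterpillar $\tG_{n+1}(1,\dots,1)$; its orientation makes every internal vertex a source or a sink, so it is a stable dynamical region, and by Lemma~\ref{lem:alternatingmultipath} its multipath complex is the matching complex $M(\tG_{n+1}(1,\dots,1))$. The boundary columns $j=0$ and $j=m$ lack legs on one side, giving the stable half-combs $\tG_{n+1}(1,0,1,0,\dots)$ and $\tG_{n+1}(0,1,0,1,\dots)$. Feeding these into the join formula yields
\[ X(\tA_n\times\tI_m)\simeq X(\tM_0)\ast M(\tG_{n+1}(1,\dots,1))^{\ast(m-1)}\ast X(\tM_m)\ , \]
where $\tM_0,\tM_m$ are the two boundary modules. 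Finally I would note that $\tM_0$ and $\tM_m$ are exactly the two modules produced by the same analysis applied to $\tA_n\times\tI_1$, so that $X(\tM_0)\ast X(\tM_m)\simeq X(\tA_n\times\tI_1)$; substituting this identity gives the stated recursive formula.

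For the explicit homotopy type I would compute the three matching complexes. The half-combs fall under Lemma~\ref{lem:catalt} (with $s=n+1$), giving $S^{(n-1)/2}$ when $n$ is odd and a contractible complex when $n$ is even; using $S^a\ast S^b\cong S^{a+b+1}$ one then gets $X(\tA_n\times\tI_1)\simeq S^{(n-1)/2}\ast S^{(n-1)/2}\simeq S^{n}$ for $n$ odd and a contractible space for $n$ even. For the full comb $M(\tG_{n+1}(1,\dots,1))$ I would use a pendant-edge argument: the terminal leg is a leaf in the line graph, and the standard suspension formula for matching complexes reduces the comb on $k$ teeth to the suspension of the comb on $k-2$ teeth; with base cases $M(\tG_1(1))$ contractible and $M(\tG_2(1,1))\simeq S^0$ this gives $M(\tG_{n+1}(1,\dots,1))\simeq S^{(n-1)/2}$ for $n$ odd and contractible for $n$ even. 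Assembling the join, for $n$ even every factor is contractible and hence so is $X(\tA_n\times\tI_m)$, while for $n$ odd the sphere-join arithmetic gives a single sphere of dimension $(m-1)\tfrac{n-1}{2}+n+(m-1)=(m-1)\tfrac{n+1}{2}+n$.

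The main obstacle is twofold. First, one must verify carefully that the iterative construction really closes up to the claimed combs: the grid vertices are genuinely unstable, so the argument rests entirely on checking the coherent dandelion splitting at every even and odd row and confirming that the even-row right legs and odd-row left legs are assigned to the correct neighbouring module. Second --- and this is the genuinely new computation --- the full comb $\tG_{n+1}(1,\dots,1)$ is not of the form $\tG_{2k-1}(m_1,0,m_2,0,\dots)$ required by Theorem~\ref{thm:caterpillarsimple}, so its matching complex cannot be read off from that result and instead needs the separate suspension recursion sketched above (or an appeal to the general caterpillar computations of \cite{caterpillar}).
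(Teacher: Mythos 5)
Your proposal is correct and is essentially the paper's own argument: the same column-by-column decomposition into dynamical modules ($m-1$ full combs $\tG_{n+1}(1,\dots,1)$ plus two boundary half-combs), the same appeal to Theorem~\ref{thm:module} and Lemma~\ref{lem:alternatingmultipath} to turn the decomposition into a join of matching complexes, and Lemma~\ref{lem:catalt} for the boundary pieces. The one substantive difference is the full comb: where you compute $M(\tG_{n+1}(1,\dots,1))$ via the suspension recursion coming from a pendant edge (a leaf of the line graph), i.e.\ $M(\tG_{k}(1,\dots,1))\simeq \Sigma M(\tG_{k-2}(1,\dots,1))$, the paper simply cites \cite[Corollary~5.12]{caterpillar}; your remark that Theorem~\ref{thm:caterpillarsimple} does not apply to this graph is exactly why that separate citation is needed, and your recursion is a correct self-contained substitute. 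One small correction to your $n$ even case: only one of your two boundary modules, namely $\tG_{n+1}(1,0,\dots,0,1)$ (legs at both ends of the spine), is of the form $\tG_s(1,0,1,\dots)$ covered by Lemma~\ref{lem:catalt}; the other, $\tG_{n+1}(0,1,0,\dots,1,0)$, is the graph the paper denotes $\tS_{n/2}$, and its matching complex is a nontrivial wedge of spheres (by Theorem~\ref{thm:caterpillarsimple} and Lemma~\ref{lem:techlem2}), not contractible. Your conclusion is unaffected --- the join already contains a genuinely contractible factor when $n$ is even, which forces contractibility of $X(\tA_n\times\tI_m)$ --- but the blanket claim that ``the half-combs fall under Lemma~\ref{lem:catalt}'' needs this qualification.
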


\begin{proof}
The product $ \tA_n \times \tI_m $ has a decomposition in to dynamical modules featuring $m-1$ copies of caterpillar graphs of type $\tG_{n+1}(1,\cdots,1)$, and two copies of caterpillar graphs of type $\tG_{n+1}(1,0,1,\dots)$, all with alternating orientations -- see also Figure~\ref{fig:grids n x m}.  By Lemma~\ref{lem:alternatingmultipath} and Theorem~\ref{thm:module}, $X(\tA_n \times \tI_m)$ decomposes as $M(\tG_{n+1}(1,...,1))^{m-1}\ast X(\tA_n \times \tI_1)$. By \cite[Corollary~5.12]{caterpillar}, $M(\tG_{n+1}(1,...,1))$ is contractible when $n$ is even, and a sphere otherwise, hence $M(\tG_{n+1}(1,...,1))^{\ast (m-1)}$ is contractible when $n$ is even, and a sphere otherwise. % -- as the join of spheres yields a sphere. 

Observe that $X(\tA_n\times \tI_1)$ is homotopic to $M(\tG_{n}(1,0,1,\dots,2))\ast M(\tG_{n}(1,0,1,\dots,2))$  when $n$ is odd, and homotopic to $M(\tG_{n+1}(1,0,1,\dots,1))\ast M(\tS_{\frac{n}{2}})$ when $n$ is even. By Lemma~\ref{lem:catalt}, $M(\tG_{n+1}(1,0,1,\dots,1))$ is contractible, and  $M(\tG_{n}(1,0,1,\dots,2))\ast M(\tG_{n}(1,0,1,\dots,2))$ is a sphere of dimension $2\tfrac{n-1}{2} + 1 = n$, hence $X(\tA_n\times \tI_1)$ is contractible when $n$ is even, and a sphere when $n$ is odd. %The assertion and computations follow.
\end{proof}
 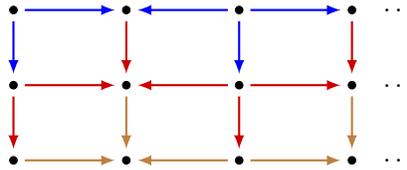
\begin{figure}[h]
     \centering
\begin{tikzpicture}[scale=1.0][baseline=(current bounding box.center)]
					\tikzstyle{point}=[circle,thick,draw=black,fill=black,inner sep=0pt,minimum width=2pt,minimum height=2pt]
					\tikzstyle{arc}=[shorten >= 8pt,shorten <= 8pt,->, thick]
					\node[above] (v0) at (0,1) {};
					\draw[fill] (0,1)  circle (.05);
					\node[above] (v1) at (1.5,1) {};
					\draw[fill] (1.5,1)  circle (.05);
					\node[above] (v2) at (3,1) {};
					\draw[fill] (3,1)  circle (.05);
					\node at (5.15,1) {$\dots$};
					\node at (5.15,0) {$\dots$};
					\node at (5.15,-1) {$\dots$};
					%\node at (0.75,-2) {$\dots$};

					\node[above] (v2) at (4.5,1) {};
					\draw[fill] (4.5,1)  circle (.05);
					
					\node[above] (w0) at (0,-1) {};
					\draw[fill] (0,-1)  circle (.05);
					\node[above] (w1) at (3.0,-1) {};
					\draw[fill] (3.0,-1)  circle (.05);
					\node[above] (w2) at (1.5,-1) {};
					\draw[fill] (1.5,-1)  circle (.05);
					\node[above] (w5) at (4.5,0) {};
					\draw[fill] (4.5,0)  circle (.05);
					\node[above] (w0) at (4.5,-1) {};
					\draw[fill] (4.5,-1)  circle (.05);
					
					\draw[thick, blue, -latex] (0.15,1) -- (1.35,1);
					\draw[thick, blue, -latex] (2.85,1) -- (1.65,1);
					\draw[thick, blue, -latex]  (3.15,1) -- (4.35,1) ;
					%\draw[thick, blue, -latex] (5.85,1) -- (4.65,1);
					%\draw[thick, bunired, latex-]  (6.15,1) -- (7.35,1) ;
% edges at 0
					\node[above] (v0) at (0,0) {};
					\draw[fill] (0,0)  circle (.05);
					%\node[above] (w0) at (4.5,0) {};
					%\draw[fill] (4.5,0)  circle (.05);
					%\node[above] (w1) at (6.0,0) {};
					%\draw[fill] (6.0,0)  circle (.05);
					%\node[above] (w2) at (7.5,0) {};
					%\draw[fill] (7.5,0)  circle (.05);
					\node[above] (v1) at (1.5,0) {};
					\draw[fill] (1.5,0)  circle (.05);
					\node[above] (v2) at (3,0) {};
					\draw[fill] (3,0)  circle (.05);
					%\node at (3.75,0) {$\dots$};
					%\node at (3.75,1) {$\dots$};
					\draw[thick, bunired, -latex] (0.15,0) -- (1.35,0);
					\draw[thick, bunired, -latex] (3.15,0) -- (4.35,0);
					\draw[thick, brown, -latex] (3.15,-1) -- (4.35,-1);
					\draw[thick, brown, -latex] (2.85,-1) -- (1.65,-1);
					\draw[thick, bunired, -latex] (2.85,0) -- (1.65,0);
					\draw[thick, brown, -latex]  (0.15,-1) -- (1.35,-1) ;
					%\draw[thick, bunired, latex-] (5.85,0) -- (4.65,0);
					%\draw[thick, bunired, latex-]  (6.15,0) -- (7.35,0) ;
% vertical edges
					\draw[thick, blue, -latex] (0,0.85) -- (0,0.15);
					\draw[thick, bunired, -latex] (1.5,0.85) -- (1.5,0.15);
					\draw[thick, blue, -latex] (3,0.85) -- (3,0.15);
					\draw[thick, bunired, -latex] (4.5,0.85) -- (4.5,0.15);
					\draw[thick, bunired, latex-] (0.0,-0.85) -- (0.0,-0.15);
					%\draw[thick, brown, -latex] (0.0,-1.15) -- (0.0,-1.85);
					\draw[thick, bunired, latex-] (3.0,-0.85) -- (3.0,-0.15);
					\draw[thick, brown, latex-] (4.5,-0.85) -- (4.5,-0.15);
					\draw[thick, brown, latex-] (1.5,-0.85) -- (1.5,-0.15);
					%\draw[thick, bunired, -latex] (7.5,0.85) -- (7.5,0.15);
			\end{tikzpicture}
     \caption{Part of the decomposition of $\tA_n\times \tI_2$ in to dynamical modules.}
     \label{fig:grids n x m}
 \end{figure}

We proceed with the computation of the (homotopy type of the) multipath complex associated to general small grids of type~$\tL\times \tI_1$, for a linear digraph $\tL$. We may assume  $\tL\neq \tI_n,\tA_n$, since we already analysed these cases.
Assume first that $\tL$ decomposes in to an unstable dynamical region of positive size, followed by another linear graph $\tL'$. In other words, we have a coherent linear graph~$\tI_n$ ($n-1$ being the size of the unstable dynamical region) followed by an alternating linear graph $\tA_m$, and so on -- see also Figure~\ref{fig:linear even alternating}.

\begin{prop}
Consider the graph $\tL$ on $n+m-1$ vertices given by a coherent linear graph $\tI_n$ followed by an alternating graph $\tA_m$. Then, the homotopy type of $X(\tL \times \tI_1)$ depends on the parity of $m$ as follows:
\[ X(\tL \times \tI_1) \simeq \begin{cases}
\bigvee^{q(m)}S^{n + m + 3} & m \text{ even},\\
\bigvee^{\frac{m +3}{2}q(m+1)} S^{n + m + 3} & m \text{ odd},
\end{cases}\]
where $q(m) = 2^{\frac{m+2}{2}}$.
\end{prop}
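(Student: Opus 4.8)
The plan is to reduce the computation to matching-complex calculations already available, via the dynamical-module decomposition. \textbf{First}, I would exhibit the decomposition of the ladder $\tL\times\tI_1$ into dynamical modules, as indicated in Figure~\ref{fig:linear even alternating}. Since the rungs coming from the $\tI_1$-factor are coherently oriented while $\tL$ passes from a coherent stretch ($\tI_n$) to an alternating stretch ($\tA_m$), every vertex interior to a module is a source or a sink; hence each module is a \emph{stable} dynamical region whose underlying undirected graph is a caterpillar. Here I would verify the two conditions of Definition~\ref{def:dynamicalregion}: that every boundary vertex is a coherent dandelion, and that no module edge lies on an oriented cycle (immediate, as the ladder is acyclic). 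Granting this, Theorem~\ref{thm:module} together with Lemma~\ref{lem:alternatingmultipath} rewrites $X(\tL\times\tI_1)$ as a join of \emph{matching} complexes of caterpillar graphs.

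\textbf{Second}, I would identify and evaluate the factors. The coherent block $\tI_n\times\tI_1$, away from the junction, contributes as in Proposition~\ref{prop:coherent products}, essentially a copy of $S^n$ built from $X(\tA_2)\simeq X(\tA_3)\simeq S^0$. The alternating block $\tA_m\times\tI_1$ contributes caterpillar modules of the types treated in Lemmas~\ref{lem:catalt} and~\ref{lem:caterpillar20101}, whose matching complexes are computed from Theorem~\ref{thm:caterpillarsimple} through the $L_k$-identity of Lemma~\ref{lem:techlem2}. The crucial point is the \emph{junction}: there the last coherent square and the first alternating rungs fuse into a single caterpillar of type $\tC$ (equivalently $\tS$), and it is this junction module that supplies the extra two dimensions, accounting for the sphere $S^{n+m+3}$ in place of the naive $S^{n+m+1}$ one would expect from joining $S^n$ with a dimension-$(m-1)$ complex.

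\textbf{Third}, I would assemble the join, using the elementary identity $\left(\bigvee^{a}S^{p}\right)\ast\left(\bigvee^{b}S^{q}\right)\simeq\bigvee^{ab}S^{p+q+1}$, so that wedge multiplicities multiply while dimensions add with a $+1$ per join; the small $\tC_1$-type modules of the alternating block (each homotopy equivalent to $\bigvee^{2}S^{0}$ by Lemma~\ref{lem:caterpillar20101}) are what generate the powers of two in $q(m)=2^{(m+2)/2}$. The parity of $m$ then enters solely through Lemma~\ref{lem:caterpillar20101}: the junction caterpillar $\tC_s$ is a single sphere when $s$ is even but a wedge of $\tfrac{s+3}{2}$ spheres when $s$ is odd, and reindexing $s$ by $m$ versus $m+1$ converts the uniform factor $q(m)$ into $\tfrac{m+3}{2}\,q(m+1)$. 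Matching these two regimes against the closed forms then yields the stated dichotomy, with the common sphere dimension $n+m+3$ emerging from the additivity of dimensions across all the joins.

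\textbf{The main obstacle} will be pinning down the junction module precisely, since the count $q(m)=2^{(m+2)/2}$ is extremely sensitive to the exact $m_i$-strings of the caterpillars: an off-by-one in the alternating pattern $(2,0,1,0,\dots)$ would alter the wedge multiplicity. Concretely, I expect the delicate part to be proving that the fused junction caterpillar is exactly of the form handled by Lemma~\ref{lem:caterpillar20101} and that the remaining bulk modules each contribute a single sphere, so that the bookkeeping of multiplicities collapses to the clean powers of two above. Verifying the stable-region hypotheses exactly at the junction, where coherent and alternating orientations meet, is the one place where the hypotheses of Lemma~\ref{lem:alternatingmultipath} could fail, and therefore merits the closest attention.
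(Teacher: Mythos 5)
Your high-level scaffolding coincides with the paper's: decompose $\tL\times\tI_1$ into stable dynamical modules (Theorem~\ref{thm:module}), identify each module's multipath complex with the matching complex of a caterpillar (Lemma~\ref{lem:alternatingmultipath}), and assemble via $\left(\bigvee^{a}S^{p}\right)\ast\left(\bigvee^{b}S^{q}\right)\simeq\bigvee^{ab}S^{p+q+1}$. But the part you defer as ``the main obstacle'' --- pinning down the modules --- is the entire substance of the proof, and the guesses you do make about it are wrong. Since the module decomposition is unique, there is no freedom here: the decomposition consists of one $\tA_2$, $(n-2)$ copies of $\tA_3$ (each contributing a single $S^0$, not $\bigvee^2 S^0$), and exactly \emph{two} large caterpillars, $\tC_1=\tG_{m+3}(1,0,0,1,0,\dots)$ and $\tC_2=\tG_{m+1}(2,0,1,0,1,\dots)$, which between them absorb the entire alternating block together with the junction; the two rows of the alternating part interleave into two combs, exactly as in Figure~\ref{fig:grids n x m} for $\tA_n\times\tI_m$. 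Consequently your central claim --- that the alternating block splits into many ``small $\tC_1$-type modules each $\simeq\bigvee^{2}S^{0}$'' whose join multiplicities multiply up to $q(m)=2^{(m+2)/2}$ --- is false: the power of two is carried by the single module $\tC_1$, whose matching complex is a wedge of $2^{\lfloor(m+3)/2\rfloor}$ spheres. Moreover that computation is not available from the results you invoke: the legs string $(1,0,0,1,0,\dots)$ contains consecutive zeros, so $\tC_1$ is outside the scope of Theorem~\ref{thm:caterpillarsimple} (which requires the pattern $(m_1,0,m_2,0,\dots)$ with all $m_i>0$), of Lemma~\ref{lem:catalt}, and of Lemma~\ref{lem:caterpillar20101}; the paper must import \cite[Theorem~5.13]{caterpillar} precisely for this module.

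Two further inaccuracies compound the gap. First, the coherent block does not contribute ``essentially a copy of $S^n$'': the modules nearest the junction are swallowed by $\tC_1$ and $\tC_2$, so the coherent part yields only $n-1$ copies of $S^0$, i.e.\ $S^{n-2}$ after joining, and your compensating remark that the junction ``supplies the extra two dimensions'' is exactly the bookkeeping that cannot be waved at, since it is where both the dimension and the multiplicity are decided. Second, the parity of $m$ does not enter ``solely through Lemma~\ref{lem:caterpillar20101}'': it enters both through $\tC_2$ (a single sphere for $m$ even versus a wedge of $\tfrac{m+3}{2}$ spheres for $m$ odd, which is the source of the prefactor $\tfrac{m+3}{2}$) and through the floor in the multiplicity $2^{\lfloor(m+3)/2\rfloor}$ of $X(\tC_1)$, which is what converts $q(m)$ into $q(m+1)$ in the odd case. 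As written, your outline reproduces the paper's strategy but not its proof: the identification of the modules and of their homotopy types, which is where all the content lies, is either missing or incorrect.
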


\begin{proof}
By Theorem~\ref{thm:module}, we can decompose $\tL\times \tI_1$ into modules:
one copy of $\tA_2$, $(n-2)$ copies of $\tA_3$, and two caterpillar graphs $\tC_1$ and $\tC_2$, oriented as illustrated in Figure~\ref{fig:linear even alternating}. Hence, the homotopy type of $X(\tL\times \tI_1)$ is given by:
\[
X(\tL\times \tI_1)\simeq X(\tA_2)*X(\tA_3)^{\ast (n-2)} * X(\tC_1)* X(\tC_2) \ ,
\]
where $\tC_{1} = \tG_{m+3}(1,0,0,1,0,...)$, while $\tC_2 = \tG_{m+1}(2,0,1,0,1,...)$. (Note that for $m=0$, $X(\tC_1) = X(\tA_3)$ and $X(\tC_2) = X(\tA_2)$, which is coherent with our computations for $\tI_n\times \tI_1$.)
While the precise homotopy types of the matching complex of the caterpillar graphs $ \tG_{m+3}(1,0,0,1,0,...)$ and $\tG_{m+1}(2,0,1,0,1,...)$ depend on the parity of~$m$, in any case they are wedges of spheres. 
By \cite[Theorem~5.13]{caterpillar}, %Theorem~\ref{thm:caterpillarfull} and Lemma~\ref{lem:techlem1}, 
we have
\[X(\tC_1) \simeq M(\tG_{m+3}(1,0,0,1,0,...))\simeq  \bigvee^{s(m)}S^{\left\lceil\tfrac{m+3}{2}\right\rceil} \ , \]
where $s(m)=2^{\left\lfloor\tfrac{m+3}{2}\right\rfloor}$.
Directly from Lemma~\ref{lem:caterpillar20101} we have
\[ X(\tC_{2}) \simeq M(\tG_{m+1}(2,0,1,0,1,...))\simeq \begin{cases}
S^{k-1} & m = 2k-2,\\
\bigvee^{k+1} S^{k-1} & m = 2k - 1.
\end{cases}\]
The statement now follows from the properties of joins and  wedges of spheres.
\end{proof}

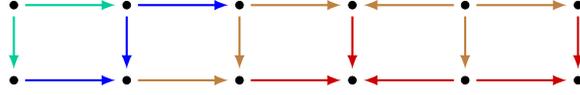
\begin{figure}
    \centering
\begin{tikzpicture}[scale=1.0][baseline=(current bounding box.center)]
					\tikzstyle{point}=[circle,thick,draw=black,fill=black,inner sep=0pt,minimum width=2pt,minimum height=2pt]
					\tikzstyle{arc}=[shorten >= 8pt,shorten <= 8pt,->, thick]
					\node[above] (v0) at (0,1) {};
					\draw[fill] (0,1)  circle (.05);
					\node[above] (w0) at (4.5,1) {};
					\draw[fill] (4.5,1)  circle (.05);
					\node[above] (w1) at (6.0,1) {};
					\draw[fill] (6.0,1)  circle (.05);
					\node[above] (w2) at (7.5,1) {};
					\draw[fill] (7.5,1)  circle (.05);
					\node[above] (v1) at (1.5,1) {};
					\draw[fill] (1.5,1)  circle (.05);
					\node[above] (v2) at (3,1) {};
					\draw[fill] (3,1)  circle (.05);
					\draw[thick, caribbeangreen, -latex] (0.15,1) -- (1.35,1);
					\draw[thick, blue, latex-] (2.85,1) -- (1.65,1);
					\draw[thick, brown, -latex]  (3.15,1) -- (4.35,1) ;
					\draw[thick, brown, -latex] (5.85,1) -- (4.65,1);
					\draw[thick, brown, -latex]  (6.15,1) -- (7.35,1) ;
% edges at 0
					\node[above] (v0) at (0,0) {};
					\draw[fill] (0,0)  circle (.05);
					\node[above] (w0) at (4.5,0) {};
					\draw[fill] (4.5,0)  circle (.05);
					\node[above] (w1) at (6.0,0) {};
					\draw[fill] (6.0,0)  circle (.05);
					\node[above] (w2) at (7.5,0) {};
					\draw[fill] (7.5,0)  circle (.05);
					\node[above] (v1) at (1.5,0) {};
					\draw[fill] (1.5,0)  circle (.05);
					\node[above] (v2) at (3,0) {};
					\draw[fill] (3,0)  circle (.05);
					%\node at (3.75,0) {$\dots$};
					%\node at (3.75,1) {$\dots$};
					\draw[thick, blue, -latex] (0.15,0) -- (1.35,0);
					\draw[thick, brown, latex-] (2.85,0) -- (1.65,0);
					\draw[thick, bunired, -latex]  (3.15,0) -- (4.35,0) ;
					\draw[thick, bunired, -latex] (5.85,0) -- (4.65,0);
					\draw[thick, bunired, -latex]  (6.15,0) -- (7.35,0) ;
% vertical edges
					\draw[thick, caribbeangreen, -latex] (0,0.85) -- (0,0.15);
					\draw[thick, blue, -latex] (1.5,0.85) -- (1.5,0.15);
					\draw[thick, brown, -latex] (3,0.85) -- (3,0.15);
					\draw[thick, bunired, -latex] (4.5,0.85) -- (4.5,0.15);
					\draw[thick, brown, -latex] (6,0.85) -- (6,0.15);
					\draw[thick, bunired, -latex] (7.5,0.85) -- (7.5,0.15);
			\end{tikzpicture}    
			\caption{Linear graph consisting of a graph $\tI_3$ followed by an $\tA_2$.}
    \label{fig:linear even alternating}
\end{figure}

More generally, given any oriented linear graph $\tL$, one can decompose it in to joins of multipath complexes associated to caterpillar graphs endowed with alternating orientations. The next proposition follows;
%\begin{prop}
%If $\tL$ is a linear graph, then $X(\tL\times \tI_1)$ is either contractible or a wedge of spheres.
%\end{prop}
\begin{prop} \label{prop:weak_dec_LxI}
If $\tL$ is a linear graph, then $\tL\times \tI_1$  decomposes into dynamical modules that are  caterpillar graphs (with  alternating orientations).
\end{prop}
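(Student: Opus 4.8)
The plan is to apply Theorem~\ref{thm:module} directly and to read off the shape of each module from the combinatorics of the ladder $\tL\times\tI_1$. First I would fix coordinates, writing $\tL$ as a path on vertices $u_0,\dots,u_n$ and realising $\tL\times\tI_1$ as a two-rowed ladder with a top copy $T_0,\dots,T_n$ and a bottom copy $B_0,\dots,B_n$ of $\tL$, joined by rungs all oriented the same way, say $T_i\to B_i$. The crucial preliminary observation is that such a ladder contains \emph{no} oriented cycle: any directed cycle would have to use a rung to pass from the top row to the bottom row, but since all rungs point downwards there is no way to climb back up. Hence condition~\ref{item:cicle} of Definition~\ref{def:dynamicalregion} is vacuous, and by the iterative construction of modules the module containing an edge $e$ is obtained purely by closing $\{e\}$ under step~(1): by repeatedly adjoining every edge that shares a \emph{source}, or shares a \emph{target}, with an edge already collected.

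Next I would extract two consequences of this closure rule. The first is that every module carries an \emph{alternating} orientation: two edges are placed in the same module only when they meet at a common source or at a common target, so at every vertex of a module all incident edges are simultaneously outgoing or simultaneously incoming, which is exactly the alternating condition. The second is a description of the shape. Tracing the closure through a single square of the ladder shows that, because the two horizontal edges of any column-pair inherit the same orientation from $\tL$, each such square is split between two different modules into two ``$L$-shaped'' pieces (one using a rung and the horizontal edge sharing its source, the other using the opposite rung and the horizontal edge sharing its target). Consequently no module can close up an undirected cycle, so every module is a tree; following the same local analysis, each module is a zig-zag (staircase) path stepping between the two rows along rungs, to which a pendant rung---a \emph{leg}---is attached at precisely those columns where $u_i$ is a source or a sink of $\tL$. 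A tree of this form, a path together with pendant legs attached to its vertices, is by definition a caterpillar graph, and by the first consequence it is alternatingly oriented.

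The bookkeeping of this staircase-with-legs is where the real work lies, and it is the step I expect to be the main obstacle. One must verify, case by case on the local type of each $u_i$ (source, sink, through-vertex, or endpoint) and on whether $T_i$ or $B_i$ is the stable vertex, that the closure neither merges two spines into a branching non-caterpillar tree nor creates a leg at a non-turning vertex; in particular one must confirm that legs attach only at the stable vertices of the ladder, which correspond bijectively to the sources and sinks of $\tL$. This verification is the general form of the explicit computations already carried out for $\tI_n\times\tI_1$, for $\tA_n\times\tI_1$ (where the turning points produce the star $\tS_1=\tG_1(3)$ together with the legged caterpillars of type $\tG_n(1,0,1,\dots,2)$), and for the $\tI_n$-then-$\tA_m$ graph. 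An alternative to the direct argument is therefore an induction on the number of turning points of $\tL$, peeling off the module containing the initial edge via Proposition~\ref{prop:regions} and recognising its complement as a shorter ladder of the same type. Either way, once every module is identified as an alternating caterpillar the proposition follows.
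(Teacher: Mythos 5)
Your reduction of the problem is the right one, and your key observation is correct: the ladder $\tL\times\tI_1$ has no directed cycles (all rungs point the same way and each row is a path), so step (2) of the paper's iterative construction is vacuous and each module is the closure of a single edge under ``add every edge sharing a source or a target with an edge already collected''. This is a genuinely different starting point from the paper, which instead proceeds by induction on the number of edges of $\tL$. But there is a genuine gap, and it is not only in the bookkeeping you defer: your ``first consequence'' --- that the closure rule by itself forces every module to be alternating --- is false. The closure is the \emph{transitive} closure of the relation ``shares a source or a target'', and a chain of such merges can perfectly well place two head-to-tail edges in one module. The transitive tournament on vertices $0,1,2$, with edges $(0,1)$, $(0,2)$, $(1,2)$, is acyclic, so there too the module of $(0,1)$ is its closure under step (1): it picks up $(0,2)$ (common source), then $(1,2)$ (common target with $(0,2)$), and the resulting module contains the head-to-tail pair $(0,1)$, $(1,2)$, hence is not alternating. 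This is exactly why transitive tournaments are single, non-alternating modules, as the paper stresses. So alternating-ness of the modules of the ladder is a structural fact that must be proved together with the caterpillar shape; since you explicitly defer that case-by-case verification (``where the real work lies''), the proposal as written is a correct plan with correct examples, not a proof.

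A second, smaller problem concerns your fallback induction: after peeling off the module of the initial edge via Proposition~\ref{prop:regions}, the complement is \emph{not} ``a shorter ladder of the same type''. For instance, in $\tI_2\times\tI_1$ with rungs $T_i\to B_i$ the module of $(T_0,T_1)$ is $\{(T_0,T_1),(T_0,B_0)\}$, and its complement has five vertices and five edges, so it is not of the form $\tL'\times\tI_1$; an induction of this kind needs a larger class of graphs stable under peeling. The paper's induction avoids this by going the other way: it grows the ladder, adding one edge of $\tL$ (hence three edges of the ladder) at a time, and checks in two local cases (new edge attached coherently or not) that the module decomposition is updated by enlarging one or two existing caterpillar modules and possibly creating one new small one. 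If you replace your ``first consequence'' by an honest local analysis --- or carry out in full the case-by-case closure computation you sketch, which would also yield the explicit shapes (staircases with pendant rungs at the stable columns) --- your approach does go through, and it is arguably more informative than the paper's, since it identifies every module explicitly rather than recursively.
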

\proof 
%Assume that linear graph~$\tL = \tL_{n-1}$ has $n$ vertices, and suppose for ease of description that they are labelled, say $\{v_1, \dots, v_n\}$ as illustrated in Figure~\ref{fig:nstep} in the case of the coherent linear graph.
We proceed by induction on the number of edges $n$. 
If $\tL_{n}$ is a linear graph on $n$ edges, the statement holds true for $\tL_0$, and it is easy to prove for $\tL_1=\tI_1$. 
We now analyse what happens to the grid $\tL_n\times \tI_1$ when adding an (oriented) edge, obtaining $\tL_{n+1}\times \tI_1$. Up to reversing the orientation of all edges in our grid, we can restrict to two different cases, as illustrated in Figures~\ref{fig:Case1} and~\ref{fig:Case2}. 
\begin{figure}[H]
    \centering
\begin{tikzpicture}[scale=1.0][baseline=(current bounding box.center)]
					\tikzstyle{point}=[circle,thick,draw=black,fill=black,inner sep=0pt,minimum width=2pt,minimum height=2pt]
					\tikzstyle{arc}=[shorten >= 8pt,shorten <= 8pt,->, thick]
                    \node[] (w0) at (-0.5,1) {\dots};
					\node[above] (v0) at (0,1) {};
					\draw[fill] (0,1)  circle (.05);
					\node[above] (v1) at (1.5,1) {};
					\draw[fill] (1.5,1)  circle (.05);
					\node[above] (v2) at (3,1) {};
					\draw[fill] (3,1)  circle (.05);
					\draw[thick, green, -latex] (0.15,1) -- (1.35,1);
					\draw[thick, bunired, latex-] (2.85,1) -- (1.65,1);
% edges at 0
                    \node[] (w0) at (-0.5,0) {\dots};
					\node[above] (v0) at (0,0) {};
					\draw[fill] (0,0)  circle (.05);
					\node[above] (v1) at (1.5,0) {};
					\draw[fill] (1.5,0)  circle (.05);
					\node[above] (v2) at (3,0) {};
					\draw[fill] (3,0)  circle (.05);
					\draw[thick, blue, -latex] (0.15,0) -- (1.35,0);
					\draw[thick, brown, latex-] (2.85,0) -- (1.65,0);
% vertical edges
					\draw[thick, green, -latex] (0,0.85) -- (0,0.15);
					\draw[thick, blue, -latex] (1.5,0.85) -- (1.5,0.15);
					\draw[thick, brown, -latex] (3,0.85) -- (3,0.15);
			\end{tikzpicture}    
\caption{First Case: An edge is glued to $\tL_n$ in a coherent way. }
    \label{fig:Case1}
\end{figure}
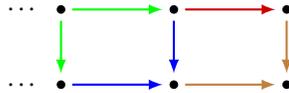
The blue edges and the green edges in both figures belong to different dynamical modules of $\tL_n\times \tI_1$; these are both, by the inductive hypothesis, caterpillar graphs with an alternating orientation. In the case illustrated in Figure~\ref{fig:Case1}, the module decomposition of $\tL_{n+1}\times \tI_1$ is obtained as follows;  one module is obtained by  adding the red edge to the module of $\tL_n\times \tI_1$ featuring the blue edges (yielding a caterpillar graph with an alternating orientation), all the other modules of $\tL_n\times \tI_1$ remain unaffected, and, in addition to those, there is a further caterpillar graph of type $\tA_{2}$ (in brown) appearing in the decomposition.  \begin{figure}[H]
    \centering
\begin{tikzpicture}[scale=1.0][baseline=(current bounding box.center)]
					\tikzstyle{point}=[circle,thick,draw=black,fill=black,inner sep=0pt,minimum width=2pt,minimum height=2pt]
					\tikzstyle{arc}=[shorten >= 8pt,shorten <= 8pt,->, thick]
                    \node[] (w0) at (-0.5,1) {\dots};
					\node[above] (v0) at (0,1) {};
					\draw[fill] (0,1)  circle (.05);
					\node[above] (v1) at (1.5,1) {};
					\draw[fill] (1.5,1)  circle (.05);
					\node[above] (v2) at (3,1) {};
					\draw[fill] (3,1)  circle (.05);
					\draw[thick, green, -latex] (0.15,1) -- (1.35,1);
					\draw[thick, darkgreen, latex-] (1.65,1) -- (2.85,1);
% edges at 0
                    \node[] (w0) at (-0.5,0) {\dots};
					\node[above] (v0) at (0,0) {};
					\draw[fill] (0,0)  circle (.05);
					\node[above] (v1) at (1.5,0) {};
					\draw[fill] (1.5,0)  circle (.05);
					\node[above] (v2) at (3,0) {};
					\draw[fill] (3,0)  circle (.05);
					\draw[thick, blue, -latex] (0.15,0) -- (1.35,0);
					\draw[thick, bunired, latex-] (1.65,0) -- (2.85,0);
% vertical edges
					\draw[thick, green, -latex] (0,0.85) -- (0,0.15);
					\draw[thick, blue, -latex] (1.5,0.85) -- (1.5,0.15);
					\draw[thick, darkgreen, -latex] (3,0.85) -- (3,0.15);
			\end{tikzpicture}   
			\caption{Second Case: An edge is glued to $\tL_n$ in a non-coherent way. }
    \label{fig:Case2}
\end{figure}
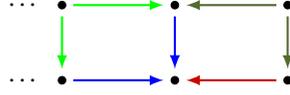
Similarly, in the second case (see Figure~\ref{fig:Case2}), the dark green edges are added to the module of $\tL_n\times \tI_1$ in light green, and the isolated red edge is added to the blue module of $\tL_n\times \tI_1$; the other modules  of $\tL_n\times \tI_1$ remain unaffected,  concluding the proof. %If the green and blue modules are  caterpillar graphs, then such operations yield caterpillar graphs.
%The assertion follows.
 \endproof
\begin{cor}
If $\tL$ is a linear graph, then $X(\tL\times \tI_1)$ is either contractible or a wedge of spheres.
\end{cor}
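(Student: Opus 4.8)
The plan is to combine the structural decomposition of Proposition~\ref{prop:weak_dec_LxI} with Theorem~\ref{thm:module} and the known homotopy types of matching complexes of caterpillar graphs. First I would invoke Proposition~\ref{prop:weak_dec_LxI} to realise the module decomposition of $\tL\times\tI_1$ as a collection $\tM_1,\dots,\tM_k$ of caterpillar graphs, each carrying an alternating orientation. By Theorem~\ref{thm:module} this yields the homotopy equivalence
\[
X(\tL\times\tI_1)\simeq X(\tM_1)\ast\cdots\ast X(\tM_k)\ .
\]

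Next I would reduce each join factor to a matching complex. Since each $\tM_i$ is alternating, it is a stable dynamical region (cf.~Example~\ref{ex:alternating}), so Lemma~\ref{lem:alternatingmultipath} identifies $X(\tM_i)$ with the matching complex $M(\tM_i')$ of the underlying undirected caterpillar graph $\tM_i'$. At this point I would appeal to \cite[Theorem~5.13]{caterpillar}, which determines the homotopy type of the matching complex of an arbitrary caterpillar graph and shows, in particular, that each such complex is either contractible or homotopy equivalent to a wedge of spheres (the degenerate factors of type $\tA_2$, whose matching complex is $S^0$, being included as caterpillar graphs).

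Finally I would assemble the join, using that $S^a\ast S^b\simeq S^{a+b+1}$, so that the join of two wedges of spheres is again a wedge of spheres, namely $\left(\bigvee_i S^{a_i}\right)\ast\left(\bigvee_j S^{b_j}\right)\simeq\bigvee_{i,j}S^{a_i+b_j+1}$, while the join of any space with a contractible one is contractible. Iterating over the factors $X(\tM_1),\dots,X(\tM_k)$, I conclude that $X(\tL\times\tI_1)$ is contractible whenever at least one factor is contractible, and a wedge of spheres otherwise. The only point requiring care is to confirm that \cite[Theorem~5.13]{caterpillar} applies to every caterpillar graph produced by the inductive decomposition of Proposition~\ref{prop:weak_dec_LxI}---including the boundary and degenerate cases---rather than only to the special strings treated in Theorem~\ref{thm:caterpillarsimple}; once this is granted, the statement is a formal consequence of the behaviour of joins of wedges of spheres.
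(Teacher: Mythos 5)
Your proposal is correct and follows essentially the same route as the paper: the paper's proof likewise combines Proposition~\ref{prop:weak_dec_LxI} with the fact that multipath complexes of alternating caterpillar modules are wedges of spheres (via Lemma~\ref{lem:alternatingmultipath} and the matching-complex results of \cite{caterpillar}), and concludes by the behaviour of joins from Theorem~\ref{thm:module}. You merely spell out the steps that the paper's one-line proof leaves implicit, including the standard identity $S^a \ast S^b \simeq S^{a+b+1}$ and the caveat about degenerate caterpillar factors.
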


\begin{proof}
Since the homotopy type of the multipath complex of a caterpillar graph with an alternating orientation is a wedge of spheres, the result follows from Proposition~\ref{prop:weak_dec_LxI}. 
\end{proof}

We remark that, reasoning as in the proof of Proposition \ref{prop:weak_dec_LxI}, it is possible to compute iteratively the number and dimension of spheres appearing in $X(\tL\times \tI_1)$.

\section{Multipath complexes of transitive tournaments}

The techniques developed in the previous section are ineffective in the case of alternating digraphs or transitive tournaments. Transitive tournaments, in fact, are dynamical modules themselves, and do not admit a smaller decomposition. Nonetheless, using techniques borrowed from combinatorial topology, we can yet compute their homotopy types. The aim of this section is to show that the homotopy type of the multipath complexes associated to transitive tournaments is also either contractible or a wedge of spheres.

Recall that  $\tT_n$ denotes the transitive tournament on  $n+1$ vertices, i.e.~the directed graph on vertices $0,\dots,n$  with directed  edges $(i,j)$ for all $i\leq j$; denote by $X(n)$ its associated multipath complex. The main result of the section is the following:

\begin{thm}\label{thm:cliqueshomotopy}
The multipath complex $X(n)$ of the transitive tournament $\tT_n$ is either contractible, or homotopy equivalent to a wedge of spheres.
\end{thm}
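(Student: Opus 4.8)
The plan is to induct on $n$, reducing the homotopy type of $X(n)$ to that of multipath complexes of transitive tournaments on fewer vertices. First I would record a concrete combinatorial model for the faces: a simplex of $X(n)$ is a set of edges $(i,j)$, $i<j$, of $\tT_n$ in which every vertex has in-degree and out-degree at most one, and since $\tT_n$ has no oriented cycles such an edge set is automatically a disjoint union of increasing paths. Equivalently, identifying each edge $(i,j)$ with the pair (source $i$, target $j$), the faces of $X(n)$ are exactly the matchings of the triangular (shifted) bipartite graph $B_n$ with parts $\{0,\dots,n-1\}$ and $\{1,\dots,n\}$ and an edge $i\to j$ whenever $i<j$. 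This reformulation $X(n)\cong M(B_n)$ places the problem inside the well-studied world of matching complexes, and it is what makes the triangular shape (as opposed to the complete bipartite graph, whose chessboard complex carries torsion) available as the source of the wedge-of-spheres conclusion.

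For the inductive step I would single out the pendant edge $e_0=(n-1,n)$: the vertex $n-1$ has $n$ as its only out-neighbour, so $e_0$ is the unique out-edge at $n-1$. Viewing $e_0$ as a vertex of the complex $X(n)$, a direct check gives $\mathrm{lk}(e_0)=X(n-1)$ (forcing $n$ to be isolated returns the tournament on $\{0,\dots,n-1\}$), while $\mathrm{del}(e_0)=X(\tT_n\setminus e_0)$. The standard mapping-cone description of a complex in terms of the deletion and link of a vertex then yields
\[
X(n)\;\simeq\;\mathrm{del}(e_0)\cup_{\mathrm{lk}(e_0)}\mathrm{cone}\big(\mathrm{lk}(e_0)\big),
\]
that is, $X(n)$ is the mapping cone of the inclusion $\iota\colon X(n-1)=\mathrm{lk}(e_0)\hookrightarrow \mathrm{del}(e_0)$. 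If $\iota$ is null-homotopic this splits as $X(n)\simeq \mathrm{del}(e_0)\vee \Sigma X(n-1)$, and since the suspension of a wedge of spheres is again a wedge of spheres, the inductive hypothesis handles the second summand.

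To close the induction I would enlarge the class being inducted over: $\mathrm{del}(e_0)$ is no longer a transitive tournament but a shifted bipartite graph (morally, a tournament with a ``doubled'' sink), and such graphs are closed under the deletion and link operations above. I would therefore prove the dichotomy ``contractible or a wedge of spheres'' simultaneously for matching complexes of all shifted/Ferrers bipartite graphs, for instance by establishing that they are (nonpure) shellable or vertex-decomposable via an elimination ordering of the target vertices; by Bj\"orner--Wachs this yields the wedge-of-spheres homotopy type directly, with the contractible case occurring exactly when the shelling produces no homology facet. Discrete Morse theory, via an acyclic matching that toggles for each face the canonically chosen edge at the smallest free source, is the natural engine for this and also renders the two alternatives transparent through the critical cells.

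The hard part will be the null-homotopy of the gluing map $\iota$ (equivalently, showing the shelling or Morse matching produces no interfering critical cells): this is precisely the step that must exploit the triangular shape, since for complete bipartite graphs the analogous map is not null and torsion appears. Concretely I would try to exhibit a cone point in $\mathrm{del}(e_0)$ -- a source or target adjacent to every vertex met by the image of $\iota$ -- into which $\mathrm{lk}(e_0)$ can be contracted, or dually to collapse $\mathrm{del}(e_0)$ onto a subcomplex containing that image as a face. Verifying that such a cone vertex always exists, together with the base cases $n\le 2$ and the bookkeeping of which spheres survive each mapping cone, is where the real work lies.
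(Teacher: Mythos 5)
Your preliminary reductions are sound: since $\tT_n$ is acyclic, any spanning subgraph with all in- and out-degrees at most one is automatically a disjoint union of simple paths, so $X(n)$ is indeed isomorphic to the matching complex of the staircase bipartite graph $B_n$; likewise $\mathrm{lk}(e_0)\cong X(n-1)$, $\mathrm{del}(e_0)=X(\tT_n\setminus e_0)$, and the mapping-cone description of $X(n)$ are all correct. The proof, however, stops at its decisive point, and the repair you outline cannot work as stated. Your strengthened induction hypothesis --- that matching complexes of \emph{all} shifted/Ferrers bipartite graphs are contractible or wedges of spheres, to be proved via shellability or vertex-decomposability --- is false: complete bipartite graphs are Ferrers, their matching complexes are the chessboard complexes, and these are known to carry $3$-torsion (Shareshian--Wachs \cite{torsion}); this is exactly the torsion phenomenon the paper invokes for stable regions and for $\tK_{n,m}$ with the alternating orientation. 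Consequently no shelling or vertex-decomposition can exist for that class, and the induction must run over a strictly smaller family that excludes rectangular boards --- for instance boards with strictly decreasing row lengths, which is what your deletion/link recursion actually generates and what the paper axiomatizes as the incomplete tournaments $\tT_n^{(i_1,\dots,i_k)}$. Isolating such a family and proving the dichotomy for it \emph{is} the theorem, and it is precisely the part your proposal defers to ``where the real work lies.''

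The concrete mechanism you suggest for the null-homotopy of $\iota$ also fails for $n\ge 3$: a cone vertex of $\mathrm{del}(e_0)$ whose star contains the image of $\iota$ would be a cell $(i,j)$ of the board of $\tT_n\setminus e_0$ compatible with every matching of $B_{n-1}$; but every such cell has $i\le n-2$, and row $i$ is nonempty in $B_{n-1}$, so some matching of $B_{n-1}$ uses row $i$ and hence is a face of the image outside the star of $(i,j)$. Note also that null-homotopy of $\iota$ follows neither from $X(n)$ being a wedge of spheres nor from $\mathrm{del}(e_0)$ and $\mathrm{lk}(e_0)$ being ones, so it genuinely needs its own argument. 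The paper's proof avoids this issue altogether: Lemma~\ref{lem:inCompleteTour} inducts over incomplete tournaments and covers $X(\tG)$ by the subcomplexes $X_t$ generated by multipaths through the edges $(j_{n-s-t},n+1)$ into the top vertex; each $X_t$ is a cone, all pairwise intersections lie in $X_0$, and Lemma~\ref{lem:BjornerLemma} gives $X(\tG)\simeq\bigvee_{i}\Sigma(X_0\cap X_i)$, where each $X_0\cap X_i$ is again an incomplete-tournament complex --- so the induction closes without ever producing a map that must be null-homotoped. If you want to keep your route, you must carry out the shelling or discrete Morse construction for the staircase-with-deleted-rows class itself; that construction is the missing proof.
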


\begin{rem}\label{rem:torsion}
The matching complex of the complete graph on $7$ vertices has $3$-torsion \cite{Bouc} (compare with~\cite[Theorem~1.3 and Remark~1.4]{torsion}). By Theorem~\ref{thm:cliqueshomotopy}, the multipath complex of a transitive tournament is contractible or a wedge of spheres. On the other hand, the matching complex can be seen as a subcomplex of the multipath complex -- see also~\cite[Section~4]{SpriSecondo}. This means that, in the case of transitive tournaments, the cells added to the matching complex to obtain the multipath complex kill the torsion.
\end{rem}

The proof of  Theorem~\ref{thm:cliqueshomotopy} will heavily rely on the following lemma:

\begin{lem}[{\cite[Lemma~10.4(ii)]{BjorTopMeth}}]\label{lem:BjornerLemma}
Suppose that $X$ is a simplicial complex which can be written as the union of subcomplexes $X_{0},\dots, X_{n}$ such that:
\begin{enumerate}[label = (\alph*)]
\item $X_i$ is contractible for each $i= 0,\dots, n$, and
\item $X_i\cap X_j \subseteq X_0$ for all $i,j\in \{ 1,..,n\}$.
\end{enumerate}
Then, we have a homotopy equivalence
\[ X \simeq \bigvee_{i=1}^n \Sigma(X_{0} \cap X_{i}) \ ,\]
where $\Sigma(X_{0} \cap X_{i})$ denotes the topological suspension of $(X_{0} \cap X_{i})$.
\end{lem}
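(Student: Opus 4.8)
The plan is to prove the statement by collapsing the contractible subcomplex $X_0$ and recognising the resulting quotient as a wedge of suspensions. Throughout I would write $A_i \coloneqq X_0 \cap X_i$ for $i = 1,\dots,n$, and $Y_i \coloneqq X_0 \cup X_i$. All inclusions appearing below are inclusions of subcomplexes, hence cofibrations, so every quotient computes the corresponding homotopy cofibre and every collapse of a contractible subcomplex is a homotopy equivalence; I will use these facts freely. (The hypothesis in (b) is of course to be read for distinct indices $i \neq j$, since $X_i \cap X_i = X_i$ is not contained in $X_0$ in general.)

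First I would record the purely combinatorial consequence of hypothesis (b). Since $X = X_0 \cup X_1 \cup \dots \cup X_n$, we also have $X = \bigcup_{i=1}^{n} Y_i$. For $i \neq j$ in $\{1,\dots,n\}$ the distributive law gives
\[
Y_i \cap Y_j = (X_0 \cup X_i)\cap(X_0 \cup X_j) = X_0 \cup (X_i \cap X_j) = X_0,
\]
where the final equality uses $X_i \cap X_j \subseteq X_0$. Thus the subcomplexes $Y_1,\dots,Y_n$ cover $X$ and meet pairwise in exactly $X_0$.

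Next I would collapse $X_0$. As $X_0$ is contractible and $X_0 \hookrightarrow X$ is a cofibration, the quotient map $X \to X/X_0$ is a homotopy equivalence. Because the $Y_i$ cover $X$ and intersect pairwise precisely in $X_0$, crushing $X_0$ to a point glues the quotients $Y_i/X_0$ only along their common basepoint, so that
\[
X \simeq X/X_0 = \bigvee_{i=1}^{n} Y_i/X_0.
\]
Collapsing $X_0$ inside $Y_i = X_0 \cup X_i$ identifies $Y_i/X_0$ with $X_i/A_i$: the points of $X_i$ outside $X_0$ are untouched, while $A_i = X_0 \cap X_i$ is crushed to the basepoint. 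Since $X_i$ is contractible and $A_i \hookrightarrow X_i$ is a cofibration, the cofibre $X_i/A_i$ is homotopy equivalent to the cofibre of $A_i \to \ast$, which is the suspension $\Sigma A_i$. Assembling these identifications yields $X \simeq \bigvee_{i=1}^{n} \Sigma(X_0 \cap X_i)$, as claimed.

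I expect the genuinely delicate points to be the homotopy-theoretic justifications rather than the combinatorics: one must know that collapsing a contractible subcomplex is a homotopy equivalence, and that $X_i/A_i \simeq \Sigma A_i$ whenever $X_i$ is contractible. Both rest on the fact that subcomplex inclusions are cofibrations together with homotopy invariance of the homotopy cofibre (the cofibre of $A \to X$ depends, up to homotopy, only on the homotopy class of the map, and reduces to $\Sigma A$ when $X \simeq \ast$). The main obstacle is to state the wedge decomposition of $X/X_0$ cleanly; in particular it is essential that the pairwise intersections equal $X_0$ \emph{exactly}, not merely that they are contained in it, for this is precisely what turns the quotient into an honest wedge rather than a more intricate homotopy colimit.
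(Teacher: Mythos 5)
Your proposal is correct, but note that the paper itself contains no proof of this statement: it is imported verbatim from Bj\"orner's survey \cite{BjorTopMeth} (Lemma~10.4(ii)), so there is no in-paper argument to compare against. What you have written is a complete and essentially standard proof of that cited lemma. The combinatorial reduction $Y_i \cap Y_j = (X_0 \cup X_i) \cap (X_0 \cup X_j) = X_0$ for $i \neq j$ is exactly the right use of hypothesis (b) (and you were right to read (b) for distinct indices only), and the three homotopy-theoretic ingredients you invoke are all valid for simplicial pairs: collapsing the contractible subcomplex $X_0$ along a cofibration is a homotopy equivalence; $Y_i/X_0 \cong X_i/A_i$ as quotient complexes; and $X_i/A_i \simeq \Sigma A_i$ when $X_i$ is contractible, via homotopy invariance of the cofibre. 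An equivalent route, closer to how Bj\"orner presents the two-subcomplex case 10.4(i), is to view $X_0 \cup X_i$ as the homotopy pushout of $\ast \leftarrow A_i \to \ast$, i.e.\ the suspension, and then induct on $n$ using the gluing lemma; your quotient argument buys a cleaner one-shot wedge decomposition at the cost of having to argue that crushing $X_0$ turns the cover $\{Y_i\}$ into an honest wedge, which you correctly reduced to the pairwise intersections being \emph{exactly} $X_0$. The only point left tacit is the degenerate case $A_i = \emptyset$: there your identification $Y_i/X_0 \cong X_i/A_i$ requires the convention $X_i/\emptyset = X_i \sqcup \ast$ (so that $Y_i/X_0 \simeq S^0$ for contractible $X_i$), which matches the convention $\Sigma\emptyset = S^0$ stated in the paper immediately after the lemma; this is a matter of bookkeeping, not a gap.
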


We remark that, by convention, $\Sigma \emptyset = S^{0} $, hence the suspension on the empty set is the $0$-dimensional sphere.

For a digraph~$\tG$, the \emph{digraph suspension} $\Sigma(\tG)$ is defined as the digraph with vertices $V(\tG)\cup \{p,q\}$, with $p,q\notin V(\tG)$, and edge set the edges of $\tG$ along with edges $(v,p)$ and $(v,q)$, for all $v$ in $V(\tG)$.
A straightforward application of Lemma~\ref{lem:BjornerLemma} allows us to compute the homotopy type of the digraph suspension in some cases. %(when there is at least a vertex of outdegree $0$, and non-zero indegree). 

\begin{prop}\label{prop:suspension}
Let $\tG$ be a connected digraph with at least a vertex $v$ of outdegree $0$, and non-zero indegree. Then, there is a homotopy equivalence
\[
X(\Sigma\tG)\simeq \Sigma X(\tG)
\]
between the multipath complex of the digraph suspension and the topological suspension of the multipath complex of~$\tG$.
\end{prop}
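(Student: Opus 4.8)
The plan is to realise $X(\Sigma\tG)$ as the union of two contractible subcomplexes meeting exactly in a copy of $X(\tG)$, and then to invoke Lemma~\ref{lem:BjornerLemma} with a single further piece (so that hypothesis (b), being about distinct indices, is vacuous); its conclusion then reads $X(\Sigma\tG)\simeq \Sigma(X_0\cap X_1)$, a single suspension. The two distinguished vertices of $X(\Sigma\tG)$ that drive the argument are the edges $a=(v,p)$ and $b=(v,q)$, where $v$ is the hypothesised vertex. Here the hypothesis on $v$ is essential: since $v$ has outdegree $0$ in $\tG$, the edges $a$ and $b$ are the \emph{only} edges of $\Sigma\tG$ issuing from $v$. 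In particular $\{a,b\}$ is a non-face of $X(\Sigma\tG)$ (it would force $v$ to have outdegree $2$), and, because $v$ is a sink, $v$ is an admissible endpoint to which one may attach $p$ or $q$ in \emph{every} multipath of $\tG$.

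I would set $C_{+}=\overline{\mathrm{st}}(a)$ and $C_{-}=\overline{\mathrm{st}}(b)$, the closed stars of these vertices. Each is a cone with apex the corresponding vertex, hence contractible. Unwinding the path poset, $C_{+}$ consists of those multipaths in which $q$ is not attached to $v$ and in which $p$ is attached, if at all, only to $v$; symmetrically for $C_{-}$. A direct check then gives $C_{+}\cap C_{-}=\{M\in X(\Sigma\tG)\mid p,q \text{ are isolated in } M\}$, and forgetting the two isolated vertices $p,q$ identifies this intersection with $X(\tG)$. This is the intended ``equator'', and the desired conclusion $\Sigma X(\tG)$ would follow at once from Lemma~\ref{lem:BjornerLemma} \emph{provided} $C_{+}\cup C_{-}$ exhausts $X(\Sigma\tG)$ up to homotopy.

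The main obstacle is exactly this last point. For $|V(\tG)|\ge 3$ there exist multipaths $M$ in which both $p$ and $q$ are attached, but to vertices \emph{other} than $v$; such $M$ lie in neither closed star. Writing $R$ for the subcomplex of these ``doubly attached'' multipaths, the key observation is that in every $M\in R$ the sink $v$ is free (neither $a$ nor $b$ belongs to $M$) and, being a sink of $\tG$, remains an available endpoint. I would exploit this to build an acyclic matching, in the sense of discrete Morse theory, on the faces of $R$ having no critical cells --- intuitively rerouting the attachment of one sink through $v$ --- so that $X(\Sigma\tG)$ collapses onto $C_{+}\cup C_{-}$. Checking that this matching is well defined, acyclic, and exactly exhausts $R$ without touching the equator $C_{+}\cap C_{-}=X(\tG)$ is the technical heart of the proof; the connectivity of $\tG$ and the non-zero indegree of $v$ enter here to ensure that $v$ is genuinely usable as a rerouting endpoint.

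Granting $X(\Sigma\tG)\simeq C_{+}\cup C_{-}$, Lemma~\ref{lem:BjornerLemma} applied with $X_{0}=C_{+}$ and the single further piece $X_{1}=C_{-}$ (both contractible) yields $X(\Sigma\tG)\simeq \Sigma(C_{+}\cap C_{-})\simeq \Sigma X(\tG)$, as claimed. As a sanity check, when $\tG$ is the single edge $u\to v$ one has $X(\tG)=\ast$, while $X(\Sigma\tG)$ is a path on five vertices, consistently contractible, and here $R=\emptyset$ so no collapse is needed; by contrast, when $\tG$ is an in-cherry into $v$ one has $X(\tG)\simeq S^{0}$ and the statement predicts $X(\Sigma\tG)\simeq S^{1}$, which is where the collapse of $R$ genuinely intervenes.
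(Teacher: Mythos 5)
Your first two paragraphs coincide with the paper's own proof: there, $X_0$ and $X_1$ are defined as the subcomplexes spanned by the multipaths containing $(v,p)$, respectively $(v,q)$ (your $C_+$ and $C_-$), they are observed to be cones, the intersection $X_0\cap X_1$ is identified with $X(\tG)$, and the equality $X_0\cup X_1=X(\Sigma\tG)$ is simply asserted (``since the outdegree of $v$ in $\tG$ is zero''), after which the suspension formula follows. So the obstacle you isolate is real, and it is not addressed in the paper either: whenever $\tG$ has two distinct vertices $w\neq u$ other than $v$, the multipath $\{(w,p),(u,q)\}$ lies in neither closed star, so the covering claim fails. Up to this point your analysis is more careful than the paper's.

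The genuine gap is your third paragraph: the collapse of $R$ onto $C_+\cup C_-$ is not merely unproved, it is impossible, because the doubly attached multipaths change the homotopy type. Your second sanity check is in fact a counterexample. Let $\tG$ be the in-cherry $w\to v\leftarrow u$, so that $X(\tG)=S^0$ and $\Sigma X(\tG)\simeq S^1$. The maximal faces of $X(\Sigma\tG)$ are the four triangles $\{(w,v),(u,p),(v,q)\}$, $\{(w,v),(v,p),(u,q)\}$, $\{(u,v),(w,p),(v,q)\}$, $\{(u,v),(v,p),(w,q)\}$, whose union is exactly $C_+\cup C_-$, together with the two edges $\{(w,p),(u,q)\}$ and $\{(u,p),(w,q)\}$, which span $R$. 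The four triangles pairwise meet in at most one vertex, arranged in a cycle, so $C_+\cup C_-\simeq S^1$; each of the two edges of $R$ is a maximal face with both endpoints in this union, so each attaches a free loop. Hence $X(\Sigma\tG)\simeq S^1\vee S^1\vee S^1$ (check: $8$ vertices, $14$ edges and $4$ triangles give reduced Euler characteristic $-3$, not $-1$). Consequently no acyclic matching, nor any other device, can collapse $X(\Sigma\tG)$ onto $C_+\cup C_-$; concretely, the two cells of $R$ outside $C_+\cup C_-$ are disjoint maximal $1$-cells, so any matching avoiding $C_+\cup C_-$ leaves both critical. The conclusion to draw is that Proposition~\ref{prop:suspension} itself fails in the stated generality: both your argument and the paper's are valid exactly when $R=\emptyset$, i.e.\ when $\tG$ has at most two vertices, which covers the only place the paper invokes the proposition ($\tT_3^{(2)}=\Sigma\tT_1$ in Lemma~\ref{lem:SmallTournaments}) but not the general statement, and no repair along the lines you propose can succeed.
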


\begin{proof}
Let $p, q$ be the added vertices of $V(\Sigma\tG) \setminus V(\tG)$.
Consider the decomposition of the simplicial complex $X(\Sigma\tG)$ given as follows; $X_0$ is the subcomplex of $X(\tG)$ spanned by all multipaths containing the edge~$(v,p)$, and $X_1$  the subcomplex of $X(\tG)$ spanned by all multipaths containing the edge~$(v,q)$. Since the outdegree of $v$ in $\tG$ is zero, it is clear that $X_0\cup X_1=X(\Sigma\tG)$. Moreover, both $X_0$ and $X_1$ are contractible. The intersection $X_0\cap X_1$ is the multipath complex of~$\tG$, hence $X(\Sigma\tG)\simeq \Sigma X(\tG)$.
\end{proof}

Before proceeding with the proof of Theorem~\ref{thm:cliqueshomotopy}, we need to introduce some more notation.

\begin{defn}
Consider the transitive tournament $\tT_n$  on vertices $0,\dots,n$.  For indices $0\leq i_1 < \dots < i_k \leq n$, denote by $\tT_n^{(i_1, \dots, i_k)}$ the subgraph of $\tT_n$ obtained by removing all edges of type $(i_j, h)$ for $j=1, \dots, k$ and $h\geq i_j$.  We call such subgraphs \emph{incomplete tournaments}.
\end{defn}

 Note that $\tT_n^{(n)} = \tT_n$. Further examples of incomplete tournaments can be found in Figures~\ref{fig:[5]^(3)}, \ref{fig:decomposition5-3} and~\ref{fig:small_incoplete}. Figure~\ref{fig:decomposition5-3} illustrates a decomposition of $X(\tT_5^{(3)})$ in to subcomplexes. 
 \begin{figure}[h]
     \centering
     \begin{tikzpicture}[scale=1.75, thick]
    \node (a) at (0,0) {};
    \node (b) at (.5,.866) {};
    \node (c) at (1,0) {};
    \node (d) at (.5,0.288) {};
    \node (e) at (0.5,-0.288) {};
    \node (f) at (0.5,-0.866) {};
    
    \node at (0,0) [below left] {$0$};
    \node at (.5,.866) [above] {$3$};
    \node at (1,0) [below right] {$2$};
    \node at (.5,0.288) [above left] {$1$};
    \node at (0.5,-0.288) [below left] {$4$};
    \node at (0.5,-0.866)[below] {$5$};

    \draw[black, fill] (a) circle (.025);
    \draw[black, fill] (b) circle (.025);
    \draw[black, fill] (c) circle (.025);
    \draw[black, fill] (d) circle (.025);
    \draw[black, fill] (e) circle (.025);
    \draw[black, fill] (f) circle (.025);

    \draw[-latex, bunired] (a) -- (b);
    \draw[-latex, bunired] (a) -- (c);
    \draw[-latex, bunired] (c) -- (b);
    \draw[-latex, bunired] (a) -- (d);
    \draw[latex-, bunired] (c) -- (d);
    \draw[-latex, bunired] (d) -- (b);

    \draw[-latex, bunired] (c) -- (f);
    \draw[-latex, bunired] (c) -- (e);
    \draw[-latex, bunired] (a) -- (f);
    \draw[-latex, bunired] (a) -- (e);
    \draw[-latex, bunired] (e) -- (f);
    \draw[-latex,  line width = 3, white] (d) -- (e);
    \draw[-latex, bunired] (d) -- (e);
    \draw[-latex, line width = 3, white] (d) .. controls +(.35,-.35) and +(.125,.25) .. (f);
    \draw[-latex, bunired] (d) .. controls +(.35,-.35) and +(.125,.25) .. (f);
     \end{tikzpicture}
     \caption{The incomplete tournament $\tT_5^{(3)}$.}
     \label{fig:[5]^(3)}
 \end{figure}
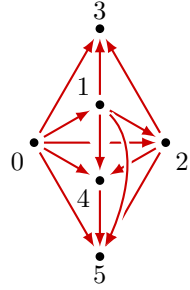
 
 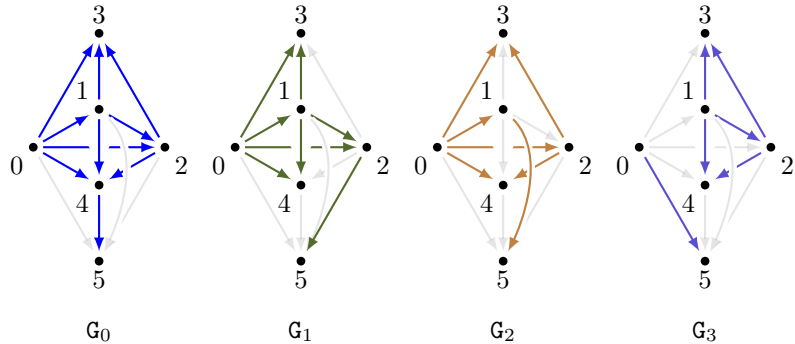
\begin{figure}
     \centering
     \begin{tikzpicture}[scale=1.75, thick]
    \node (a) at (0,0) {};
    \node (b) at (.5,.866) {};
    \node (c) at (1,0) {};
    \node (d) at (.5,0.288) {};
    \node (e) at (0.5,-0.288) {};
    \node (f) at (0.5,-0.866) {};
    
    \node at (0,0) [below left] {$0$};
    \node at (.5,.866) [above] {$3$};
    \node at (1,0) [below right] {$ 2$};
    \node at (.5,0.288) [above left] {$ 1$};
    \node at (0.5,-0.288) [below left] {$ 4$};
    \node at (0.5,-0.866)[below] {$ 5$};
    
    \node at (0.5,-1.25) [below] {$\tG_0$};
    
    \draw[black, fill] (a) circle (.025);
    \draw[black, fill] (b) circle (.025);
    \draw[black, fill] (c) circle (.025);
    \draw[black, fill] (d) circle (.025);
    \draw[black, fill] (e) circle (.025);
    \draw[black, fill] (f) circle (.025);

    \draw[-latex, blue] (a) -- (b);
    \draw[-latex, blue] (a) -- (c);
    \draw[-latex, blue] (c) -- (b);
    \draw[-latex, blue] (a) -- (d);
    \draw[-latex, blue] (d) -- (c);
    \draw[-latex, blue] (d) -- (b);

    \draw[-latex, opacity = .2, gray] (c) -- (f);
    \draw[-latex, blue] (c) -- (e);
    \draw[-latex, opacity = .2, gray] (a) -- (f);
    \draw[-latex, blue] (a) -- (e);
    \draw[-latex, blue] (e) -- (f);
    \draw[-latex,  line width = 3, white] (d) -- (e);
    \draw[-latex, blue] (d) -- (e);
    \draw[-latex, line width = 3, white] (d) .. controls +(.35,-.35) and +(.125,.25) .. (f);
    \draw[-latex, opacity = .2, gray] (d) .. controls +(.35,-.35) and +(.125,.25) .. (f);
     \end{tikzpicture}
     \begin{tikzpicture}[scale=1.75, thick]
    \node (a) at (0,0) {};
    \node (b) at (.5,.866) {};
    \node (c) at (1,0) {};
    \node (d) at (.5,0.288) {};
    \node (e) at (0.5,-0.288) {};
    \node (f) at (0.5,-0.866) {};
    
    \node at (0,0) [below left] {$ 0$};
    \node at (.5,.866) [above] {$ 3$};
    \node at (1,0) [below right] {$ 2$};
    \node at (.5,0.288) [above left] {$ 1$};
    \node at (0.5,-0.288) [below left] {$ 4$};
    \node at (0.5,-0.866)[below] {$ 5$};
    \node at (0.5,-1.25) [below] {$\tG_1$};
    
    \draw[black, fill] (a) circle (.025);
    \draw[black, fill] (b) circle (.025);
    \draw[black, fill] (c) circle (.025);
    \draw[black, fill] (d) circle (.025);
    \draw[black, fill] (e) circle (.025);
    \draw[black, fill] (f) circle (.025);

    \draw[-latex, darkgreen] (a) -- (b);
    \draw[-latex, darkgreen] (a) -- (c);
    \draw[-latex, opacity = .2, gray] (c) -- (b);
    \draw[-latex, darkgreen] (a) -- (d);
    \draw[latex-, darkgreen] (c) -- (d);
    \draw[-latex, darkgreen] (d) -- (b);

    \draw[-latex, opacity = .2, gray] (c) -- (e);
    \draw[-latex, opacity = .2, gray] (a) -- (f);
    \draw[-latex, darkgreen] (a) -- (e);
    \draw[-latex, opacity = .2, gray] (e) -- (f);
    \draw[-latex,  line width = 3, white] (d) -- (e);
    \draw[-latex, darkgreen] (d) -- (e);
    \draw[-latex, line width = 3, white] (d) .. controls +(.35,-.35) and +(.125,.25) .. (f);
    \draw[-latex, opacity = .2, gray] (d) .. controls +(.35,-.35) and +(.125,.25) .. (f);
    \draw[-latex, darkgreen] (c) -- (f);
     \end{tikzpicture}
       \begin{tikzpicture}[scale=1.75, thick]
    \node (a) at (0,0) {};
    \node (b) at (.5,.866) {};
    \node (c) at (1,0) {};
    \node (d) at (.5,0.288) {};
    \node (e) at (0.5,-0.288) {};
    \node (f) at (0.5,-0.866) {};
    
    \node at (0,0) [below left] {$ 0$};
    \node at (.5,.866) [above] {$ 3$};
    \node at (1,0) [below right] {$ 2$};
    \node at (.5,0.288) [above left] {$ 1$};
    \node at (0.5,-0.288) [below left] {$ 4$};
    \node at (0.5,-0.866)[below] {$ 5$};
    
    \node at (0.5,-1.25) [below] {$\tG_2$};
    
    \draw[black, fill] (a) circle (.025);
    \draw[black, fill] (b) circle (.025);
    \draw[black, fill] (c) circle (.025);
    \draw[black, fill] (d) circle (.025);
    \draw[black, fill] (e) circle (.025);
    \draw[black, fill] (f) circle (.025);

    \draw[-latex, brown] (a) -- (b);
    \draw[-latex, brown] (a) -- (c);
    \draw[-latex, brown] (c) -- (b);
    \draw[-latex, brown] (a) -- (d);
    \draw[latex-, opacity = .2, gray] (c) -- (d);
    \draw[-latex, opacity = .2, gray] (d) -- (b);

    \draw[-latex,  opacity = .2, gray] (c) -- (f);
    \draw[-latex, brown] (c) -- (e);
    \draw[-latex,  opacity = .2, gray] (a) -- (f);
    \draw[-latex, brown] (a) -- (e);
    \draw[-latex,  opacity = .2, gray] (e) -- (f);
    \draw[-latex,  line width = 3, white] (d) -- (e);
    \draw[-latex,  opacity = .2, gray] (d) -- (e);
    \draw[-latex, line width = 3, white] (d) .. controls +(.35,-.35) and +(.125,.25) .. (f);
    \draw[-latex, brown] (d) .. controls +(.35,-.35) and +(.125,.25) .. (f);
     \end{tikzpicture}
      \begin{tikzpicture}[scale=1.75, thick]
    \node (a) at (0,0) {};
    \node (b) at (.5,.866) {};
    \node (c) at (1,0) {};
    \node (d) at (.5,0.288) {};
    \node (e) at (0.5,-0.288) {};
    \node (f) at (0.5,-0.866) {};
    
    \node at (0,0) [below left] {$ 0$};
    \node at (.5,.866) [above] {$ 3$};
    \node at (1,0) [below right] {$ 2$};
    \node at (.5,0.288) [above left] {$ 1$};
    \node at (0.5,-0.288) [below left] {$ 4$};
    \node at (0.5,-0.866)[below] {$ 5$};

    \node at (0.5,-1.25) [below] {$\tG_3$};
    
    \draw[black, fill] (a) circle (.025);
    \draw[black, fill] (b) circle (.025);
    \draw[black, fill] (c) circle (.025);
    \draw[black, fill] (d) circle (.025);
    \draw[black, fill] (e) circle (.025);
    \draw[black, fill] (f) circle (.025);

    \draw[-latex, opacity = .2, gray] (a) -- (b);
    \draw[-latex, opacity = .2, gray] (a) -- (c);
    \draw[-latex, iris] (c) -- (b);
    \draw[-latex,  opacity = .2, gray] (a) -- (d);
    \draw[latex-, iris] (c) -- (d);
    \draw[-latex, iris] (d) -- (b);

    \draw[-latex,  opacity = .2, gray] (c) -- (f);
    \draw[-latex, iris] (c) -- (e);
    \draw[-latex, iris] (a) -- (f);
    \draw[-latex,  opacity = .2, gray] (a) -- (e);
    \draw[-latex,  opacity = .2, gray] (e) -- (f);
    \draw[-latex,  line width = 3, white] (d) -- (e);
    \draw[-latex, iris] (d) -- (e);
    \draw[-latex, line width = 3, white] (d) .. controls +(.35,-.35) and +(.125,.25) .. (f);
    \draw[-latex,  opacity = .2, gray] (d) .. controls +(.35,-.35) and +(.125,.25) .. (f);
     \end{tikzpicture}
     \caption{Decomposition of $\tG = \tT_5^{(3)}$.}
     \label{fig:decomposition5-3}
 \end{figure}

\begin{lem}\label{lem:SmallTournaments}
The multipath complex of each incomplete tournament of a transitive tournament on $2$, $3$, or $4$ vertices is empty, contractible or a wedge of spheres.
\end{lem}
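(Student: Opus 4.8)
The plan is to reduce the statement to a finite verification and then dispatch the handful of genuinely two-dimensional cases by hand. An incomplete tournament on $\{0,\dots,n\}$ is determined by the set $S$ of vertices whose outgoing edges are deleted, and since the maximal vertex is already a sink it suffices to let $S$ range over subsets of $\{0,\dots,n-1\}$. For transitive tournaments on at most four vertices (that is $n\le 3$) this produces only finitely many digraphs, so in principle one enumerates them all. First I would record two reductions that kill most cases: an isolated vertex leaves the multipath complex unchanged and may be discarded, and whenever the incomplete tournament decomposes non-trivially into dynamical modules, Theorem~\ref{thm:module} expresses $X(-)$ as a join of the multipath complexes of the modules.

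Applying these reductions, most deleted-source graphs split off their sinks and sources and reduce, via Theorem~\ref{thm:module} and Proposition~\ref{prop:regions}, to coherently oriented linear graphs $\tI_k$ (whose multipath complexes are simplices, hence contractible) and to source or sink graphs $\tD_{k,0},\tD_{0,k}$ (whose multipath complexes are wedges of $0$-spheres). Since a join involving a contractible factor is contractible, and a join of wedges of spheres is again a wedge of spheres, every such reducible incomplete tournament is at once seen to be empty, contractible, or a wedge of spheres.

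There remains a short list of indecomposable incomplete tournaments, namely those that form a single dynamical module: the full tournaments $\tT_1,\tT_2,\tT_3$ together with the incomplete tournaments $\tT_3^{(1)}$ and $\tT_3^{(2)}$. For $\tT_3^{(2)}$ I would observe that it is exactly the digraph suspension $\Sigma\tI_1$, so by Proposition~\ref{prop:suspension} we get $X(\tT_3^{(2)})\simeq\Sigma X(\tI_1)$, which is contractible. The graphs $\tT_1$, $\tT_2$ and $\tT_3^{(1)}$ are at most one-dimensional: listing their multipaths shows $X(\tT_1)$ is a point, while $X(\tT_2)$ and $X(\tT_3^{(1)})$ are each the disjoint union of a contractible piece and an isolated vertex of the complex, hence homotopy equivalent to $S^0$.

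The only real obstacle is $X(\tT_3)$, the unique genuinely two-dimensional complex arising here. I would compute it directly: enumerating the multipaths of $\tT_3$ shows that $X(\tT_3)$ has six vertices (one per edge of $\tT_3$), seven edges, and a single two-cell coming from the Hamiltonian path $0\to 1\to 2\to 3$, consistent with the reduced Euler characteristic $-1$ given by Theorem~\ref{thm:transitive}. Performing elementary collapses --- first collapsing the unique degree-one vertex of the complex, then a free edge of the triangle, and finally the resulting degree-one vertex --- reduces $X(\tT_3)$ to a four-cycle, so $X(\tT_3)\simeq S^1$. Combining all the cases yields the claim.
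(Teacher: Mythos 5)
Your proposal is correct and takes essentially the same route as the paper: a finite case check whose only subtle case, $\tT_3^{(2)}$, is recognized as the digraph suspension $\Sigma\tT_1=\Sigma\tI_1$ and dispatched by Proposition~\ref{prop:suspension}, exactly as in the paper's proof. The remaining cases, which the paper compresses into ``direct computation,'' you carry out explicitly (isolated-vertex and module/join reductions, the $S^0$ cases, and the collapse of $X(\tT_3)$ to a four-cycle), and these computations check out.
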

\begin{proof}
The assertion follows by direct computation; see Figure~\ref{fig:small_incoplete}. The only nontrivial case is $\tT_3^{(2)}$, which is the digraph $\Sigma \tT_1$. Now, by Proposition~\ref{prop:suspension}, it follows that $X(\tT_3^{(2)})$ is contractible, concluding the computation.
\end{proof}

\begin{figure}[h]
    \centering
    \begin{tikzpicture}[scale = 1.5, thick]
    %%%% Triangle
    \node (a) at (0,0) {};
    \node (b) at (.5,.866) {};
    \node (c) at (1,0) {};
    
    \node at (0,0) [below left] {$ 0$};
    \node at (.5,.866) [above] {$ 1$};
    \node at (1,0) [below right] {$ 2$};

    \draw[black, fill] (a) circle (.035);
    \draw[black, fill] (b) circle (.035);
    \draw[black, fill] (c) circle (.035);
    
    \draw[-latex, bunired] (a) -- (b);
    \draw[-latex, bunired] (a) -- (c);
    \draw[-latex, bunired] (b) -- (c);
    
    \begin{scope}[shift = {+(3,0)}]
     \node (a) at (0,0) {};
    \node (b) at (.5,.866) {};
    \node (c) at (1,0) {};
    
    \node at (0,0) [below left] {$ 0$};
    \node at (.5,.866) [above] {$ 1$};
    \node at (1,0) [below right] {$ 2$};

    \draw[black, fill] (a) circle (.035);
    \draw[black, fill] (b) circle (.035);
    \draw[black, fill] (c) circle (.035);
    
    \draw[-latex, bunired] (a) -- (b);
    \draw[-latex, bunired] (a) -- (c);
    \draw[-latex, opacity = .2, gray] (b) -- (c);
    \end{scope}
    
    \begin{scope}[shift = {+(6,0)}]
     \node (a) at (0,0) {};
    \node (b) at (.5,.866) {};
    \node (c) at (1,0) {};
    
    \node at (0,0) [below left] {$ 0$};
    \node at (.5,.866) [above] {$ 1$};
    \node at (1,0) [below right] {$ 2$};

    \draw[black, fill] (a) circle (.035);
    \draw[black, fill] (b) circle (.035);
    \draw[black, fill] (c) circle (.035);
    
    \draw[-latex, opacity = .2, gray] (a) -- (b);
    \draw[-latex, opacity = .2, gray] (a) -- (c);
    \draw[-latex, bunired] (b) -- (c);
    \end{scope}

    \begin{scope}[shift = {+(9,0)}]
     \node (a) at (0,0) {};
    \node (b) at (.5,.866) {};
    \node (c) at (1,0) {};
    
    \node at (0,0) [below left] {$ 0$};
    \node at (.5,.866) [above] {$ 1$};
    \node at (1,0) [below right] {$ 2$};

    \draw[black, fill] (a) circle (.035);
    \draw[black, fill] (b) circle (.035);
    \draw[black, fill] (c) circle (.035);
    
    \draw[-latex, opacity = .2, gray] (a) -- (b);
    \draw[-latex, opacity = .2, gray] (a) -- (c);
    \draw[-latex, opacity = .2, gray] (b) -- (c);
    \end{scope}
    
    \end{tikzpicture}\\
    
    \begin{tikzpicture}[scale = 1.5]
    \node (a) at (0,0) {};
    \node (b) at (.5,.866) {};
    \node (c) at (1,0) {};
    \node (d) at (.5,0.288) {};

    \node at (0,0) [below left] {$ 0$};
    \node at (.5,.866) [above] {$ 1$};
    \node at (1,0) [below right] {$ 2$};
    \node at (.5,0.288) [below] {$ 3$};

    \draw[black, fill] (a) circle (.035);
    \draw[black, fill] (b) circle (.035);
    \draw[black, fill] (c) circle (.035);
    \draw[black, fill] (d) circle (.035);
    
    \draw[-latex, bunired] (a) -- (b);
    \draw[-latex, bunired] (a) -- (c);
    \draw[-latex, bunired] (b) -- (c);
    \draw[-latex, bunired] (a) -- (d);
    \draw[-latex, bunired] (c) -- (d);
    \draw[-latex, bunired] (b) -- (d);
    
     \begin{scope}[shift = {+(3,0)}]
     \node (a) at (0,0) {};
    \node (b) at (.5,.866) {};
    \node (c) at (1,0) {};
    \node (d) at (.5,0.288) {};

    \node at (0,0) [below left] {$ 0$};
    \node at (.5,.866) [above] {$ 1$};
    \node at (1,0) [below right] {$ 2$};
    \node at (.5,0.288) [below] {$ 3$};

    \draw[black, fill] (a) circle (.035);
    \draw[black, fill] (b) circle (.035);
    \draw[black, fill] (c) circle (.035);
    \draw[black, fill] (d) circle (.035);
    
    \draw[-latex, bunired] (a) -- (b);
    \draw[-latex, bunired] (a) -- (c);
    \draw[-latex, bunired] (b) -- (c);
    \draw[-latex, bunired] (a) -- (d);
    \draw[-latex, opacity = .2, gray] (c) -- (d);
    \draw[-latex, bunired] (b) -- (d);
    \end{scope}
     \begin{scope}[shift = {+(6,0)}]
     \node (a) at (0,0) {};
    \node (b) at (.5,.866) {};
    \node (c) at (1,0) {};
    \node (d) at (.5,0.288) {};

    \node at (0,0) [below left] {$ 0$};
    \node at (.5,.866) [above] {$ 1$};
    \node at (1,0) [below right] {$ 2$};
    \node at (.5,0.288) [below] {$ 3$};

    \draw[black, fill] (a) circle (.035);
    \draw[black, fill] (b) circle (.035);
    \draw[black, fill] (c) circle (.035);
    \draw[black, fill] (d) circle (.035);
    
    \draw[-latex, bunired] (a) -- (b);
    \draw[-latex, bunired] (a) -- (c);
    \draw[-latex, opacity = .2, gray] (b) -- (c);
    \draw[-latex, bunired] (a) -- (d);
    \draw[-latex, bunired] (c) -- (d);
    \draw[-latex, opacity = .2, gray] (b) -- (d);
    \end{scope}
    \begin{scope}[shift = {+(9,0)}]
     \node (a) at (0,0) {};
    \node (b) at (.5,.866) {};
    \node (c) at (1,0) {};
    \node (d) at (.5,0.288) {};

    \node at (0,0) [below left] {$ 0$};
    \node at (.5,.866) [above] {$ 1$};
    \node at (1,0) [below right] {$ 2$};
    \node at (.5,0.288) [below] {$ 3$};

    \draw[black, fill] (a) circle (.035);
    \draw[black, fill] (b) circle (.035);
    \draw[black, fill] (c) circle (.035);
    \draw[black, fill] (d) circle (.035);
    
    \draw[-latex, opacity = .2, gray] (a) -- (b);
    \draw[-latex, opacity = .2, gray] (a) -- (c);
    \draw[-latex, bunired] (b) -- (c);
    \draw[-latex, opacity = .2, gray] (a) -- (d);
    \draw[-latex, bunired] (c) -- (d);
    \draw[-latex, bunired] (b) -- (d);
    \end{scope}
    
     \begin{scope}[shift = {+(0,-2)}]
     \node (a) at (0,0) {};
    \node (b) at (.5,.866) {};
    \node (c) at (1,0) {};
    \node (d) at (.5,0.288) {};

    \node at (0,0) [below left] {$ 0$};
    \node at (.5,.866) [above] {$ 1$};
    \node at (1,0) [below right] {$ 2$};
    \node at (.5,0.288) [below] {$ 3$};

    \draw[black, fill] (a) circle (.035);
    \draw[black, fill] (b) circle (.035);
    \draw[black, fill] (c) circle (.035);
    \draw[black, fill] (d) circle (.035);
    
    \draw[-latex, bunired] (a) -- (b);
    \draw[-latex, bunired] (a) -- (c);
    \draw[-latex, opacity = .2, gray] (b) -- (c);
    \draw[-latex, bunired] (a) -- (d);
    \draw[-latex, opacity = .2, gray] (c) -- (d);
    \draw[-latex, opacity = .2, gray] (b) -- (d);
    \end{scope}
     \begin{scope}[shift = {+(3,-2)}]
     \node (a) at (0,0) {};
    \node (b) at (.5,.866) {};
    \node (c) at (1,0) {};
    \node (d) at (.5,0.288) {};

    \node at (0,0) [below left] {$ 0$};
    \node at (.5,.866) [above] {$ 1$};
    \node at (1,0) [below right] {$ 2$};
    \node at (.5,0.288) [below] {$ 3$};

    \draw[black, fill] (a) circle (.035);
    \draw[black, fill] (b) circle (.035);
    \draw[black, fill] (c) circle (.035);
    \draw[black, fill] (d) circle (.035);
    
    \draw[-latex, opacity = .2, gray] (a) -- (b);
    \draw[-latex, opacity = .2, gray] (a) -- (c);
    \draw[-latex, opacity = .2, gray] (b) -- (c);
    \draw[-latex, opacity = .2, gray] (a) -- (d);
    \draw[-latex, bunired] (c) -- (d);
    \draw[-latex, opacity = .2, gray] (b) -- (d);
    \end{scope}
    \begin{scope}[shift = {+(6,-2)}]
     \node (a) at (0,0) {};
    \node (b) at (.5,.866) {};
    \node (c) at (1,0) {};
    \node (d) at (.5,0.288) {};

    \node at (0,0) [below left] {$ 0$};
    \node at (.5,.866) [above] {$ 1$};
    \node at (1,0) [below right] {$ 2$};
    \node at (.5,0.288) [below] {$ 3$};

    \draw[black, fill] (a) circle (.035);
    \draw[black, fill] (b) circle (.035);
    \draw[black, fill] (c) circle (.035);
    \draw[black, fill] (d) circle (.035);
    
    \draw[-latex, opacity = .2, gray] (a) -- (b);
    \draw[-latex, opacity = .2, gray] (a) -- (c);
    \draw[-latex, bunired] (b) -- (c);
    \draw[-latex, opacity = .2, gray] (a) -- (d);
    \draw[-latex, opacity = .2, gray] (c) -- (d);
    \draw[-latex, bunired] (b) -- (d);
    \end{scope}
    \begin{scope}[shift = {+(9,-2)}]
     \node (a) at (0,0) {};
    \node (b) at (.5,.866) {};
    \node (c) at (1,0) {};
    \node (d) at (.5,0.288) {};

    \node at (0,0) [below left] {$ 0$};
    \node at (.5,.866) [above] {$ 1$};
    \node at (1,0) [below right] {$ 2$};
    \node at (.5,0.288) [below] {$ 3$};

    \draw[black, fill] (a) circle (.035);
    \draw[black, fill] (b) circle (.035);
    \draw[black, fill] (c) circle (.035);
    \draw[black, fill] (d) circle (.035);
    
    \draw[-latex, opacity = .2, gray] (a) -- (b);
    \draw[-latex, opacity = .2, gray] (a) -- (c);
    \draw[-latex, , opacity = .2, gray] (b) -- (c);
    \draw[-latex, opacity = .2, gray] (a) -- (d);
    \draw[-latex, opacity = .2, gray] (c) -- (d);
    \draw[-latex, , opacity = .2, gray] (b) -- (d);
    \end{scope}
    \end{tikzpicture}
    \caption{Small transitive tournaments and the corresponding incomplete tournaments}
    \label{fig:small_incoplete}
\end{figure}
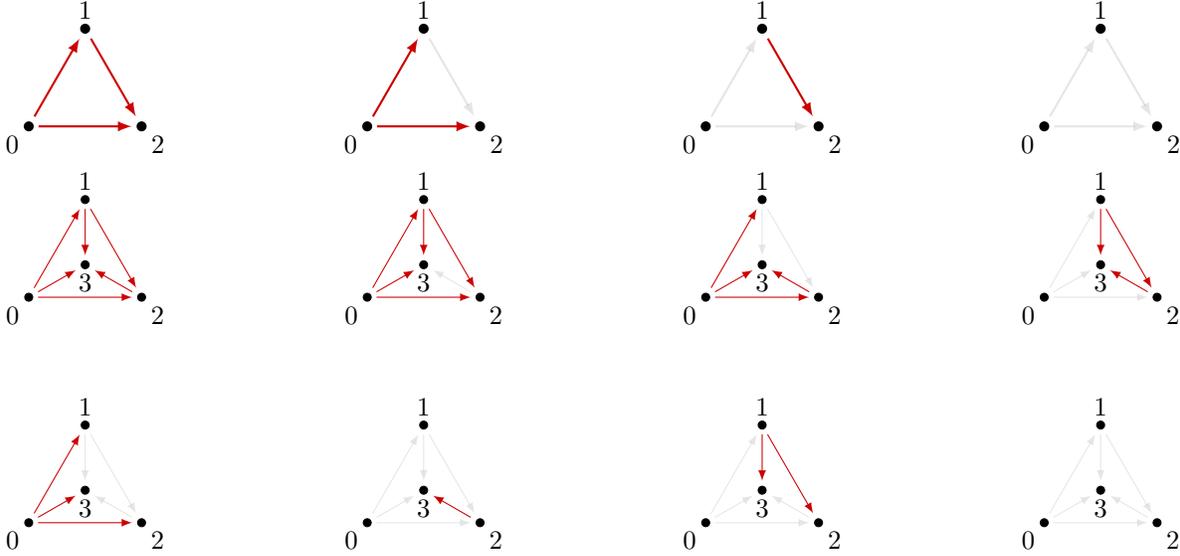

The proof of Theorem~\ref{thm:cliqueshomotopy} is now a straightforward application of the following lemma:

\begin{lem}\label{lem:inCompleteTour}
The multipath complex of all incomplete tournaments are empty, contractible, or a wedge of spheres.
\end{lem}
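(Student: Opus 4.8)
The plan is to argue by induction on the number of vertices, with the base case (tournaments on at most four vertices) supplied by Lemma~\ref{lem:SmallTournaments}. The engine throughout is Björner's Lemma~\ref{lem:BjornerLemma}, together with two elementary facts: a closed star of a vertex in a simplicial complex is contractible, and adjoining an isolated vertex to a digraph does not change its multipath complex. Before decomposing I would first dispose of isolated vertices: if the least vertex has all its out-edges removed it is isolated (it has no in-edges), so deleting it leaves an incomplete tournament on fewer vertices, and one may assume no such vertex occurs.

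The heart of the argument is the decomposition at the top sink $n$, which is exactly the one pictured in Figure~\ref{fig:decomposition5-3}. Let $u_1<\dots<u_r$ be the in-neighbours of $n$ and set $X_i=\mathrm{St}\big((u_i,n)\big)$. In the \emph{generic case}, where the second-largest vertex $n-1$ is not one of the removed vertices, the edge $(n-1,n)$ is present, $u_r=n-1$, and I would single out $X_0:=\mathrm{St}\big((n-1,n)\big)$. Each $X_i$ is a closed star, hence contractible. Since all these edges point into $n$, two of them can be simultaneously present-or-addable only when $n$ is unmatched; thus $X_i\cap X_j$ consists of multipaths leaving $n$ isolated, and because the \emph{only} out-edge of $n-1$ is $(n-1,n)$, every such multipath already lies in $X_0$. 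This gives $X_i\cap X_j\subseteq X_0$ and $\bigcup_i X_i=X(\tG)$. Finally I would identify $X_0\cap X_i$ (multipaths with $n$ isolated and $u_i$ of out-degree $0$) with the multipath complex of the incomplete tournament on $\{0,\dots,n-1\}$ obtained by additionally deleting the out-edges at $u_i$; this has $n$ vertices, so the inductive hypothesis applies. Björner's lemma then yields
\[
X(\tG)\ \simeq\ \bigvee_{i=1}^{r-1}\Sigma\big(X_0\cap X_i\big),
\]
a wedge of suspensions of empty, contractible, or wedge-of-spheres complexes, hence contractible or a wedge of spheres.

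The step I expect to be the main obstacle is the complementary case, where $n-1$ is itself a removed (sink) vertex: then $(n-1,n)$ is absent, the distinguished top star is unavailable, and the remaining stars $\mathrm{St}\big((u_i,n)\big)$ genuinely fail to cover the multipaths that leave $n$ isolated while every in-neighbour of $n$ is already occupied (one checks this already for $\tT_4^{(1,3)}$, whose multipath complex is $S^1$). To resolve it I would exploit that $n-1$ and $n$ are then \emph{twins}, i.e. share the same in-neighbourhood; more generally the maximal suffix $\{m+1,\dots,n\}$ of sink vertices all have a common in-neighbourhood. When this suffix-block together with its neighbourhood is a digraph suspension, I would strip it off using Proposition~\ref{prop:suspension} to reduce to a smaller incomplete tournament, and otherwise combine that reduction with the inductive hypothesis applied to the base tournament on $\{0,\dots,m\}$. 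A possible alternative is to run the mirror decomposition at the source $0$, whose corner edge $(0,1)$ is \emph{always} present (the only in-neighbour of $1$ is $0$), at the price of enlarging the inductive class to transitive tournaments with both prescribed sinks and sources — a class closed under both decompositions and under the reversal isomorphism $X(\tG)\cong X(\tG^{\mathrm{op}})$. In either route the delicate point, and the true crux of the proof, is to keep every Björner piece \emph{simultaneously} contractible and equal to a multipath complex of a strictly smaller member of the inductive class; the twin/top-block configuration is precisely where this bookkeeping must be done with care.
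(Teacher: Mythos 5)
Your ``generic case'' is exactly the paper's proof. The paper covers $X(\tG)$ by the subcomplexes $X_t$ spanned by the edges compatible with the in-edges of the top vertex --- these are precisely your closed stars $\mathrm{St}\big((u_i,n)\big)$ --- takes the distinguished piece $X_0$ to be the star at the largest in-neighbour, verifies contractibility and the intersection condition of Lemma~\ref{lem:BjornerLemma} just as you do, identifies each $X_0\cap X_i$ with the multipath complex of a smaller incomplete tournament, and closes the induction on Lemma~\ref{lem:SmallTournaments}. So wherever the second-largest vertex is not a sink, your argument and the paper's coincide and are correct. The genuine gap in your proposal is the complementary case, which you explicitly leave open: when the second-largest vertex is a sink you offer two candidate strategies (stripping off the twin/suffix block via Proposition~\ref{prop:suspension}, or a mirror decomposition at the source over an enlarged inductive class), but you carry out neither, so for graphs such as $\tT_4^{(1,3)}$ the lemma is not proved. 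An unexecuted plan for the hardest configuration is a real gap, and it is the step on which the induction turns.

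That said, your diagnosis of that case is correct, and it is sharper than the paper's own treatment of it. The paper asserts the covering claim unconditionally, yet it fails exactly where you say: in $\tT_4^{(1,3)}$ the multipath $\{(0,1),(2,3)\}$ lies in no star of an in-edge of the top vertex, since $0$ and $2$ both already have out-degree one; moreover the covering and the containment $X_i\cap X_j\subseteq X_0$ both require that the largest in-neighbour of the top vertex have $(j_{n-s},n+1)$ as its \emph{only} out-edge, which holds only when the second-largest vertex is not a sink. The paper's opening reduction (``otherwise $\tG$ is an incomplete tournament in $\tT_n$'') does not dispose of these graphs: $\tT_4^{(1,3)}$ has five vertices, none isolated, and its complex is $S^1$, so it is not covered by any smaller member of the class in the way claimed. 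In other words, you have not missed an idea that the paper supplies; you located the precise point at which the published argument is incomplete. To convert your proposal into a proof, the more promising of your two routes is the second: the decomposition at the source with distinguished edge $(0,1)$ does satisfy the hypotheses of Lemma~\ref{lem:BjornerLemma} for every incomplete tournament without isolated vertices, but its pieces delete the in-star of a vertex and hence leave the class of incomplete tournaments, so one must first strengthen the inductive statement to the class of tournaments with both out-stars and in-stars removed (which is closed under this decomposition and under edge reversal), re-check the small cases for that class, and only then run the induction.
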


\begin{proof}
%\CC{If non empty the intersection is non empty}
We proceed by induction, the cases $n=1,2, 3$ provided in Lemma~\ref{lem:SmallTournaments}.

Assume by induction that all incomplete tournaments in $\tT_h$, for $h\leq n$, are contractible or wedges of spheres. Let~$\tG$ be an incomplete tournament in $\tT_{n+1}$, say $\tG=\tT_{n+1}^{i_1, \dots, i_s}$. Without loss of generality, we can assume that $i_1< \dots < i_{s-1} < {n+1}$;  otherwise, $\tG$ is an incomplete tournament in $\tT_{n} \subseteq \tT_{n+1}$, in which case covered by the inductive assumption.  Observe that we can also assume that $i_1, \dots , i_{s-1} $ are not the full set $1,\dots , n$; otherwise $\tG$ would be a sink graph, hence its associated multipath complex would be a wedge of $0$-dimensional spheres. 

The strategy is to decompose $\tG$ in to smaller pieces as by Lemma~\ref{lem:BjornerLemma}. Let $\{j_0, \dots, j_{n-s}\} $ be the set $  \{ 0, 1 ,\dots, n\} \setminus \{ i_1,\dots,i_s\}$, with $j_0 < \dots < j_{n-s}$. Set $X_t$ to be the multipath complex associated to the subgraph~$\tG_t$ spanned by all edges which appear in a multipath featuring $(j_{n-s -t},n+1)$ in $\tG$  -- see also Figure~\ref{fig:decomposition5-3}. Observe that the simplicial complexes~$X_0,...,X_{n-s}$ cover $X(\tG)$. Furthermore, all the simplicial complexes $X_i$ are contractible; in fact, the edge~$(j_{n-s -t},n+1)$ is a module in $\tG_i$ (hence, $X_i$ is a cone). The intersection $X_i\cap X_j$ is contained in $X_0$: all multipaths which are both in~$\tG_i$ and~$\tG_j$ are multipaths in $\tG$ which do not feature the vertex $n+1$, and the vertex $j_{n-s}$ has outdegree $0$ in $\tG$ (and there are no oriented cycles in $\tT_{n+1}$). Therefore, by Lemma~\ref{lem:BjornerLemma}, the homotopy type of $X(\tG)$ is given by wedges of suspensions of $X_0\cap X_i$. To conclude, we want to show that $X_0\cap X_i$ is the multipath complex of an incomplete transitive tournament in $\tT_n$. This would conclude the proof by an inductive argument.

The complex $X_0\cap X_i$ is given by all multipaths in $\tG$ not featuring edges of type $(j_{n-s},p)$ and $(j_{n-s-i},q)$, for all $p$ and $q$, nor edges with target $n+1$. Hence, all such multipaths can be seen as multipaths in $\tT_{n}^I$ where $I$ is a re-ordering of the set $\{ i_1,...,i_s,j_{n -s - i},j_{n-s}\}$. \emph{Vice versa} all multipaths in $\tT_{n}^I$ appear as multipaths in $X_0\cap X_i$. Therefore, the complex $X_0\cap X_i$
can  be identified with the multipath complex of $\tT_{n}^I$, concluding the proof.
\end{proof}

\begin{rem}
Multipath complexes of transitive tournaments are generally not pure. In fact, computer-aided computations show that $\tT_7$ has non-trivial cohomology in degree $2$ and $3$, where ${\rm H}^2(X(\tT_7))\simeq \bZ^6$ and ${\rm H}^3(X(\tT_7))\simeq \bZ^{15}$. 
\end{rem}

\begin{rem}
It can be shown that $X(\tT_n)$ is shellable, and thus a wedge of spheres. Using a recursive coatom ordering, see \cite[Section~4.2]{Wac07}, where the coatoms of the top element (i.e. the maximal elements) are ordered lexicographically by their edges, and all other orderings follow canonically, since for every other element the downset is a Boolean lattice. However, we gain no new insight from this approach so omit the proof.
\end{rem}
If we consider the complete digraph $\tK_n$, where all edges are bidirectional, we no longer get wedges of spheres. In fact for $n=3$ the multipath complex $X(\tK_n)$ is 2 disconnected 1-spheres.

\bibliographystyle{alpha}
\bibliography{biblio}
\end{document}